\renewcommand{\AA}{\mathcal{A}}
\newcommand{\anni}{^\annisign}
\newcommand{\annisign}{\perp}
\DeclareMathOperator*{\argmin}{arg\,min}
\newcommand{\BB}{\mathcal{B}}
\newcommand{\CC}{\mathcal{C}}
\renewcommand{\d}{\mathrm{d}}
\renewcommand{\div}{\operatorname{div}}
\newcommand{\dualspace}{^\dualsign}
\newcommand{\dualsign}{\star}
\newcommand{\graph}{\operatorname{graph}}
\newcommand{\id}{\operatorname{id}}
\def\itemref#1{\itemrefintern#1MYENDING}
\def\itemrefintern#1:#2:#3MYENDING{\cref{#1:#2}\ref{#1:#2:#3}}
\newcommand{\KK}{\mathcal{K}}
\newcommand\mto{\rightrightarrows}
\newcommand{\N}{\mathbb{N}}
\newcommand{\NN}{\mathcal{N}}
\DeclareMathAlphabet{\mathpzc}{OT1}{pzc}{m}{it}
\newcommand{\oo}{\mathpzc{o}}
\newcommand{\polar}{^\polarsign}
\newcommand{\polarsign}{\circ}
\newcommand{\Proj}{\operatorname{Proj}}
\newcommand{\prox}{\operatorname{prox}}
\newcommand{\R}{\mathbb{R}}
\newcommand{\UU}{\mathcal{U}}
\newcommand{\weakly}{\rightharpoonup}
\newcommand{\YY}{\mathcal{Y}}
\newcommand{\ZZ}{\mathcal{Z}}
\newcommand{\graphlim}{\operatorname{graph~lim}}
\newcommand{\shrink}{\operatorname{shrink}}
\newcommand{\sign}{\operatorname{sign}}
\newenvironment{msc}%
{\vspace*{-0.75cm}\small\quotation\noindent{\normalfont\sectfont\nobreak MSC (2020)\quad}}{\endquotation\vskip0.7cm}
\newcommand{\mscLink}[1]{\href{https://www.ams.org/mathscinet/msc/msc2020.html?t=#1}{#1}}
\newenvironment{keywords}%
{\vspace*{-0.75cm}\small\quotation\noindent{\normalfont\sectfont\nobreak Keywords\quad}}{\endquotation\vskip0.7cm}
\DeclarePairedDelimiter\abs{\lvert}{\rvert}
\DeclarePairedDelimiter\norm{\lVert}{\rVert}
\DeclarePairedDelimiter\parens{(}{)}
\providecommand\given{\nonscript\;\delimsize|\nonscript\;\mathopen{}}
\DeclarePairedDelimiterX\set[1]{\{}{\}}{#1}
\DeclarePairedDelimiterX\seq[1]{(}{)}{#1}
\DeclarePairedDelimiterX\innerprod[2]{(}{)}{#1,#2}
\DeclarePairedDelimiterX\dual[2]{\langle}{\rangle}{#1,#2}
\theoremstyle{definition}
\newtheorem{assumption}[theorem]{Assumption}
\crefname{assumption}{Assumption}{Assumptions} 
\title{From resolvents to generalized equations and quasi-variational inequalities: existence and differentiability}
\shorttitle{From resolvents to GEs and QVIs}
\author{Gerd Wachsmuth%
	\thanks{%
		Brandenburgische Technische Universität Cottbus--Senftenberg,
		Institute of Mathematics,
		03046 Cottbus,
		Germany,
		\email{gerd.wachsmuth@b-tu.de},
		\url{https://www.b-tu.de/fg-optimale-steuerung},
		\orcid{0000-0002-3098-1503}%
	}
}
\begin{document}

\maketitle

\begin{abstract}
	We consider a generalized equation
	governed by a strongly monotone and Lipschitz
	single-valued mapping
	and a maximally monotone set-valued mapping in a Hilbert space.
	We are interested in the sensitivity of solutions
	w.r.t.\ perturbations of both mappings.
	We demonstrate that the directional differentiability
	of the solution map
	can be verified
	by using the directional differentiability
	of the single-valued operator and of the resolvent of the set-valued mapping.
	The result is applied to quasi-generalized equations
	in which we have an additional dependence
	of the solution within the set-valued part of the equation.
\end{abstract}

\begin{keywords}
	generalized equations,
	variational inclusion,
	directional differentiability,
	resolvent operator,
	proto-derivative,
	quasi-variational inequality
\end{keywords}

\begin{msc}
	\mscLink{49J53},
	\mscLink{47J22},
	\mscLink{49J50}
\end{msc}

\section{Introduction}
We consider the (local) solution mapping $S \colon U \to Y$, $u \mapsto y$, of the generalized equation
\begin{equation}
	\label{eq:p}
	0 \in A(y, u) + B(y, u).
\end{equation}
Here, $Y$ is a (real) Hilbert space,
$U$ is a (real) Banach space,
$A \colon Y \times U \to Y\dualspace$ is (locally) strongly monotone and Lipschitz w.r.t.\ its first argument
and
the set-valued map $B \colon Y \times U \mto Y\dualspace$ is assumed to be maximally monotone w.r.t.\ its first argument.

The equation \eqref{eq:p} can be used to model many real-world phenomena.
In the case where $B(\cdot, u)$ coincides with the subdifferential $\partial j_u$
of a proper, lower-semicontinuous and convex function $j_u \colon Y \to (-\infty,\infty]$,
\eqref{eq:p} is a variational inequality (VI) of the second kind.
If further, $j_u$ is the indicator function $\delta_{C_u} \colon Y \to \set{0,\infty}$
of a non-empty, closed and convex set $C_u \subset Y$,
it is a VI of the first kind
and $\partial\delta_{C_u} = \NN_{C_u}$ is the normal cone of $C_u$.

We show by some simple arguments
that
the existence of solutions and the directional differentiability of $S$
follow
from the properties of $A$ and of the resolvent $J_B$ of $B$
and from the directional differentiability of $A$ and $J_B$, respectively.
Here, $J_B \colon Y \times U \to Y$, $(q, u) \mapsto y$,
is the solution map of
\begin{equation}
	\label{eq:resol}
	0 \in R(y - q) + B(y, u)
\end{equation}
where $R \colon Y \to Y\dualspace$ is the Riesz map of the Hilbert space $Y$.
Similarly, we treat the
equation
\begin{equation}
	\label{eq:qp}
	0 \in A(y,u) + B(y - \Phi(y,u), u),
\end{equation}
where $\Phi \colon Y \times U \to Y$
is assumed to have a small Lipschitz constant w.r.t.\ the first variable.
Note that \eqref{eq:qp}
is an inclusion which generalizes
the setting of so-called quasi-variational inequalities (QVIs).

Let us put our work in perspective.
The existence of solutions to \eqref{eq:p}
is well understood,
we refer to, e.g.,
\cite[Section~23.4]{BauschkeCombettes2011}.
The first contribution
which studies differentiability of problems similar to \eqref{eq:p}
is \cite{Zarantonello1971},
in which it is shown that the projection onto a closed, convex set $C$
is directionally differentiable on $C$ itself.
In \cite{Mignot1976,Haraux1977}
it is proved that the projection is directionally differentiable everywhere,
if $C$ is assumed to be polyhedric,
see also \cite{Wachsmuth2016:2}.
Note that the projection onto $C$ is the solution mapping
of \eqref{eq:p} with
$A(y,u) = R(y - u)$
and $B(y,u) = \partial\delta_C(y) = \NN_C(y)$.
The case of non-linear $A$ was treated in \cite{LevyRockafellar1994,Levy1999}.
Later,
theory for the differentiability of $J_B$ with $B = \partial j$
was set up in \cite{Do1992}.
It was shown that the differentiability of $J_{\partial j}$
is equivalent to the
so-called twice epi-differentiability of $j$,
see also \cite{ChristofWachsmuth2017:3}.
Finally,
\cite[Theorem~1]{AdlyRockafellar2020}
studies \eqref{eq:p}
with a real parameter $u \ge 0$.
However, since we are mainly interested in directional differentiability,
this is not a restriction.

Contributions corresponding to the QVI case
\eqref{eq:qp}
are rather new
and are currently restricted to the special case
\begin{equation*}
	\text{Find } y \in K(y)
	\quad\text{s.t.}\quad
	\dual{A(y) - u}{v - y} \ge 0
	\quad\forall v \in K(y)
\end{equation*}
with $K(y) = K - \Phi(y)$ for some polyhedric set $K \subset Y$.
Important parameters for the study of this problem are
the constant $\mu_A$ of strong monotonicity of $A$
and the Lipschitz constants $L_A$ and $L_\Phi$
of $A$ and $\Phi$, respectively.
The first contribution in this direction is
\cite[Theorem~1]{AlphonseHintermuellerRautenberg2019}.
Therein,
the authors showed directional differentiability
into non-negative directions
under monotonicity assumptions on $A$
and
$L_\Phi < \mu_A / (\mu_A + L_A)$.
Afterwards,
\cite[Theorem~5.5]{Wachsmuth2019:2}
proved the directional differentiability into arbitrary directions
and the smallness assumption on $\Phi$ was relaxed to
$L_\Phi < \mu_A / L_A$
(and an even weaker inequality suffices if $A$ is the derivative of a convex function).
However, this result needs
that $\Phi$ is Fréchet differentiable
(or, at least Bouligand differentiable).
Later,
\cite[Theorem~3.2]{AlphonseHintermuellerRautenberg:2020_preprint}
showed
that it is sufficient to have a directionally differentiable $\Phi$
at the price of having again the stricter requirement
$L_\Phi < \mu_A / (\mu_A + L_A)$.
A different approach, which is based on concavity properties
and which is not restricted to the special case above,
is given in \cite{ChristofWachsmuth2021:1}.

The main contributions of our paper are the following.
\begin{enumerate}[label=(\roman*)]
	\item
		\cref{thm:diff_S} shows that the solution map of \eqref{eq:p}
		is directionally differentiable.
		This result is very similar to
		\cite[Theorem~1]{AdlyRockafellar2020}.
		However, our assumptions are localized around a solution
		and we require the directional differentiability of the resolvent $J_B$
		instead of the proto-differentiability of $B$.
		This results in a much easier proof.
	\item
		\cref{lem:proto_vs_resolvent}
		shows that the directional differentiability of $J_B$
		is equivalent to $B$ being proto-differentiable
		with a maximally monotone proto-derivative.
		Thus, the differentiability assumptions on $B$
		in \cref{thm:diff_S}
		and
		\cite[Theorem~1]{AdlyRockafellar2020}
		coincide.
	\item
		In \cref{sec:qge}
		we show that the approaches
		of
		\cite[Theorem~1]{AlphonseHintermuellerRautenberg2019}
		and
		\cite[Theorem~5.5]{Wachsmuth2019:2}
		can be generalized to deal with the solution map of \eqref{eq:qp}.
		Moreover,
		we only need directional differentiability of the data functions
		$A$, $\Phi$ and $J_B$
		as in 
		\cite{AlphonseHintermuellerRautenberg2019}
		and only the weaker requirements on $L_\Phi$
		from
		\cite{Wachsmuth2019:2}.
\end{enumerate}
The paper is structured as follows.
In \cref{sec:notation} we fix some notation
and state \cref{thm:cvx_lipschitz_2}
concerning convex functions
with a strongly monotone and Lipschitz continuous derivative.
\cref{sec:gen_eq} contains the differentiability result for \eqref{eq:p},
whereas
\cref{sec:qge} is concerned with \eqref{eq:qp}.
Some applications are presented in \cref{sec:applications}.

\section{Notation}
\label{sec:notation}

In this paper, all linear spaces are over the field of real numbers.
For a Banach space $U$,
$\norm{\,\cdot\,} \colon U \to [0,\infty)$
and
$\dual{\,\cdot\,}{\,\cdot\,} \colon U\dualspace \times U \to \R$
denote the norm and the duality product, respectively.
In a Hilbert space $Y$,
$\innerprod{\,\cdot\,}{\,\cdot\,} \colon Y \times Y \to \R$
is the inner product.
The spaces will be clear from the context
and will not be indicated by a subscript.

Let $U$ and $Y$ be a Banach space
and a Hilbert space, respectively.
We denote by $R \colon Y \to Y\dualspace$
the Riesz map of $Y$.
For $\varepsilon > 0$ and $y^* \in Y$,
$B_\varepsilon(y^*)$ ($U_\varepsilon(y^*)$) denotes the closed (open) ball
in $Y$
with center $y^*$ and radius $\varepsilon$, respectively.

If $B \colon Y \times U \mto Y\dualspace$
and
$\UU \subset U$
are given
such that
$B(\cdot, u) \colon Y \mto Y\dualspace$
is maximally monotone
for all $u \in \UU$,
we say that
$B$ is a parametrized maximally monotone operator
and
we define its resolvent
$J_B \colon Y \times \UU \to Y$
via
$J_B(\cdot, u) := J_{B(\cdot, u)}$,
i.e.,
for $(q, u) \in Y \times \UU$
the point
$y = J_B(q, u) = (R + B(\cdot, u))^{-1}(R q)$
is the unique solution of
\begin{equation*}
	0 \in R(y - q) + B(y, u)
	.
\end{equation*}

For a closed, convex subset $K \subset Y$,
we denote by $T_K \colon Y \mto Y$ and $N_K \colon Y \mto Y\dualspace$
the tangent-cone and normal-cone map.
Moreover,
by
\begin{equation*}
	K\polar := \set{ \mu \in Y\dualspace \given \dual{\mu}{v} \le 0 \; \forall v \in K },
	\qquad
	\mu\anni := \set{ v \in Y \given \dual{\mu}{v} = 0 }
\end{equation*}
we
denote the polar cone of $K$
and
the annihilator of $\mu \in Y\dualspace$,
respectively.

We need a characterization of convex functions
defined on subsets of $Y$
with a Lipschitz continuous derivative.
\begin{theorem}
	\label{thm:cvx_lipschitz}
	Let $\YY \subset Y$ be nonempty, open and convex
	and $L_f \in (0,\infty)$.
	Further,
	the function $f \colon \YY \to \R$
	is assumed to be convex and continuous.
	Then, the following assertions
	are equivalent.
	\begin{enumerate}[label=(\roman*)]
		\item
			$f$ is Gâteaux differentiable on $\YY$ and $f' \colon \YY \to Y\dualspace$ is $L_f$-Lipschitz continuous on $\YY$.
		\item
			$f$ is Gâteaux differentiable on $\YY$ and $f' \colon \YY \to Y\dualspace$ is $1/L_f$-cocoercive,
			i.e.,
			\begin{equation*}
				\dual{f'(y_2) - f'(y_1)}{y_2 - y_1} \ge \frac1{L_f}\norm{f'(y_2) - f'(y_1)}^2
				\qquad
				\forall y_1,y_2 \in \YY.
			\end{equation*}
		\item
			The map $\frac{L_f}{2} \norm{\cdot}^2 - f$
			is convex on $\YY$.
	\end{enumerate}
\end{theorem}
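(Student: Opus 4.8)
The plan is to prove the equivalence of the three characterizations of convex functions with Lipschitz derivative by establishing the cycle (i) $\Rightarrow$ (iii) $\Rightarrow$ (ii) $\Rightarrow$ (i), since this is the route along which each implication rests on a clean, self-contained argument. The implication (ii) $\Rightarrow$ (i) is immediate from Cauchy--Schwarz: cocoercivity gives $\frac1{L_f}\norm{f'(y_2)-f'(y_1)}^2 \le \dual{f'(y_2)-f'(y_1)}{y_2-y_1} \le \norm{f'(y_2)-f'(y_1)}\,\norm{y_2-y_1}$, and dividing through yields Lipschitz continuity. So the content lies in the first two implications.

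For (i) $\Rightarrow$ (iii), I would set $g := \frac{L_f}{2}\norm{\cdot}^2 - f$ and show $g$ is convex by verifying that its Gâteaux derivative $g'(y) = R y - f'(y)$ is monotone on $\YY$; since $g$ is continuous and $\YY$ is open and convex, monotonicity of the Gâteaux derivative is equivalent to convexity (this is standard, and the paper may quote it, e.g.\ from \cite{BauschkeCombettes2011}). Monotonicity reads $\dual{Ry_2 - Ry_1}{y_2-y_1} \ge \dual{f'(y_2)-f'(y_1)}{y_2-y_1}$, i.e.\ $\norm{y_2-y_1}^2 \ge \frac1{L_f}\dual{f'(y_2)-f'(y_1)}{y_2-y_1}$; but by (i), $\dual{f'(y_2)-f'(y_1)}{y_2-y_1} \le \norm{f'(y_2)-f'(y_1)}\,\norm{y_2-y_1} \le L_f\norm{y_2-y_1}^2$, which gives exactly what is needed.

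The main obstacle is (iii) $\Rightarrow$ (ii), the classical Baillon--Haddad-type argument, which I would carry out as follows. Fix $y_1, y_2 \in \YY$ and, for $i \in \{1,2\}$, consider the auxiliary function $h_i(z) := f(z) - \dual{f'(y_i)}{z}$, which is convex on $\YY$ (it is $f$ minus an affine functional) and Gâteaux differentiable with $h_i'(z) = f'(z) - f'(y_i)$, so $h_i$ is minimized at $z = y_i$. The key point is that, by (iii), $\frac{L_f}{2}\norm{\cdot}^2 - h_i$ is still convex on $\YY$ (subtracting an affine functional preserves this), which means $h_i$ itself has an $L_f$-Lipschitz derivative in the sense already available; this yields the descent-type estimate $h_i(z') \le h_i(z) + \dual{h_i'(z)}{z'-z} + \frac{L_f}{2}\norm{z'-z}^2$ for $z, z'$ in $\YY$, provided the segment stays inside $\YY$. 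Applying this to move from $y_i$ toward a point that decreases $h_i$ — formally $z' = y_i - \frac1{L_f} R^{-1} h_i'(z)$ — and using that $y_i$ is the global minimizer, one obtains, after combining the two inequalities for $i=1$ and $i=2$, the cocoercivity estimate. A technical wrinkle is that $\YY$ is only open, not all of $Y$, so the exact minimizer step must be handled by a localization or openness argument (working on the segment between $y_1$ and $y_2$, which lies in $\YY$ by convexity, and choosing the step small enough or passing to the inequality form directly); this bookkeeping is the part that needs care, but it is routine once the structure above is in place.
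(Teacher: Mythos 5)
A point of reference first: the paper does not prove \cref{thm:cvx_lipschitz} itself; it simply cites \cite[Theorem~3.1]{PerezArosVilches2019}, precisely because the open-set version of this Baillon--Haddad-type equivalence is not a routine adaptation of the classical whole-space argument. In your cycle, (ii)~$\Rightarrow$~(i) and (i)~$\Rightarrow$~(iii) are fine. The problems sit in (iii)~$\Rightarrow$~(ii), and they are exactly the points your sketch defers. A first, smaller gap: under (iii) alone $f$ is not assumed Gâteaux differentiable, yet you define $h_i(z) = f(z) - \dual{f'(y_i)}{z}$; the differentiability claimed in (ii) must be derived from (iii) before this construction makes sense. (It does follow: for $p \in \partial f(y)$ and $q \in \partial\bigl(\tfrac{L_f}{2}\norm{\cdot}^2 - f\bigr)(y)$ one gets $p + q = L_f R y$ by testing with directions inside the open set $\YY$, so $\partial f(y)$ is a singleton and $f$ is Gâteaux differentiable; but this step is absent from your proposal.)

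The serious gap is the localization you dismiss as routine bookkeeping. Your argument needs the descent estimate and the minimality $h_i(y_i) \le h_i(z')$ at the stepped point $z' = z - \tfrac{1}{L_f} R^{-1} h_i'(z)$ (note: the step must be taken from $z$, not from $y_i$ as written), and this point may lie outside $\YY$, where $f$ is not even defined. Shrinking the step so as to stay in $\YY$ only yields $h_i(y_i) \le h_i(z) - t\bigl(1 - \tfrac{L_f t}{2}\bigr)\norm{h_i'(z)}^2$ for admissible $t$, and near the boundary of $\YY$ the admissible $t$ can be forced well below $1/L_f$; this produces cocoercivity with a worse, point-dependent constant, not the constant $1/L_f$ required in (ii). Working ``on the segment between $y_1$ and $y_2$'' does not repair this either: a one-dimensional restriction only controls the component of $f'(y_2) - f'(y_1)$ along $y_2 - y_1$, giving an estimate of the form $\dual{f'(y_2)-f'(y_1)}{y_2-y_1} \ge \tfrac{1}{L_f} \dual{f'(y_2)-f'(y_1)}{y_2-y_1}^2 / \norm{y_2-y_1}^2$, which is strictly weaker than the full dual-norm inequality in (ii) in dimension greater than one. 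Recovering the sharp constant on an arbitrary open convex $\YY$ is the actual content of the theorem of Pérez-Aros and Vilches that the paper invokes (their proof is built to handle exactly this obstruction, e.g.\ by extending $f$ beyond $\YY$ while preserving convexity and $L_f$-smoothness); as it stands, your proof of (iii)~$\Rightarrow$~(ii) does not go through.
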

For the proof, we refer to \cite[Theorem~3.1]{PerezArosVilches2019}.

Next, we give
an important
inequality
for convex functions.
This inequality is well-known
in the finite-dimensional case if the convex function is defined on the entire space,
see, e.g.,
\cite[Theorem~2.1.12]{Nesterov2004}
or
\cite[Lemma~3.10]{Bubeck2015}.
The infinite dimensional version (on the entire space) was given in
\cite[Lemma~3.4]{Wachsmuth2019:2}.
By using \cref{thm:cvx_lipschitz},
we can adopt the proof to the situation at hand.
\begin{theorem}
	\label{thm:cvx_lipschitz_2}
	Let $\YY \subset Y$ be nonempty, open and convex.
	Further,
	let $f \colon \YY \to \R$
	be convex, continuous and Gâteaux differentiable
	such that
	$f' \colon \YY \to Y\dualspace$ is strongly monotone with constant $\mu_f \in (0,\infty)$
	and Lipschitz continuous with constant $L_f \in (0,\infty)$.
	Then,
	\begin{equation*}
		\dual[\big]{f'(y_2) - f'(y_1)}{y_2 - y_1}
		\ge
		\frac{\mu_f L_f}{\mu_f + L_f} \norm{y_2 - y_1}^2
		+
		\frac{1}{\mu_f + L_f} \norm[\big]{f'(y_2) - f'(y_1)}^2
	\end{equation*}
	for all $y_1, y_2 \in \YY$.
\end{theorem}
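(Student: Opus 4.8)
The plan is to mimic the classical argument for convex functions on the whole space, see \cite[Lemma~3.4]{Wachsmuth2019:2}, and to invoke \cref{thm:cvx_lipschitz} in order to compensate for the fact that $f$ is only defined on the open convex set $\YY$. Throughout I abbreviate $\delta y := y_2 - y_1 \in Y$ and $\delta f := f'(y_2) - f'(y_1) \in Y\dualspace$.

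First I would introduce the shifted function $g \colon \YY \to \R$, $g(y) := f(y) - \frac{\mu_f}{2}\norm{y}^2$. It is continuous and Gâteaux differentiable with $g'(y) = f'(y) - \mu_f R y$, and strong monotonicity of $f'$ yields $\dual{g'(y_2) - g'(y_1)}{y_2 - y_1} \ge 0$ for all $y_1, y_2 \in \YY$. Reducing to one real variable along the segment $[y_1, y_2] \subset \YY$, a Gâteaux-differentiable function with monotone derivative is convex; hence $g$ is convex. I would also record that $\mu_f \le L_f$, which follows by combining strong monotonicity, the Cauchy--Schwarz inequality and Lipschitz continuity of $f'$.

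Now \cref{thm:cvx_lipschitz} applied to $f$ tells us that $\frac{L_f}{2}\norm{\cdot}^2 - f$ is convex on $\YY$, and since $\frac{L_f}{2}\norm{\cdot}^2 - f = \frac{L_f - \mu_f}{2}\norm{\cdot}^2 - g$, the implication from (iii) to (ii) in \cref{thm:cvx_lipschitz}, this time for the convex, continuous, Gâteaux-differentiable function $g$ with constant $L_f - \mu_f$ (assuming first that $\mu_f < L_f$, so that this constant is positive), shows that $g'$ is $1/(L_f - \mu_f)$-cocoercive:
\begin{equation*}
	\dual{g'(y_2) - g'(y_1)}{y_2 - y_1} \ge \frac{1}{L_f - \mu_f} \norm{g'(y_2) - g'(y_1)}^2 .
\end{equation*}
It then remains to substitute $g'(y) = f'(y) - \mu_f R y$ and to expand, keeping track of the Riesz map between $Y$ and $Y\dualspace$: one has $\dual{g'(y_2) - g'(y_1)}{\delta y} = \dual{\delta f}{\delta y} - \mu_f \norm{\delta y}^2$ and $\norm{g'(y_2) - g'(y_1)}^2 = \norm{\delta f}^2 - 2\mu_f \dual{\delta f}{\delta y} + \mu_f^2 \norm{\delta y}^2$. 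Inserting this, multiplying by $L_f - \mu_f > 0$ and collecting terms yields
\begin{equation*}
	(\mu_f + L_f) \dual{\delta f}{\delta y} \ge \norm{\delta f}^2 + \mu_f L_f \norm{\delta y}^2 ,
\end{equation*}
which after division by $\mu_f + L_f$ is exactly the claimed estimate. The borderline case $\mu_f = L_f$ then follows by letting $\mu' \uparrow \mu_f$ in the inequality just established with $\mu_f$ replaced by an arbitrary $\mu' \in (0, \mu_f)$; note that $f'$ is also $\mu'$-strongly monotone and that $\mu' < L_f$.

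The manipulations are routine. The only points requiring some care are the verification that $g$ is convex, so that \cref{thm:cvx_lipschitz} is applicable to $g$; the bookkeeping with the duality pairing and the Riesz map when expanding $\norm{g'(y_2) - g'(y_1)}^2$; and the separate treatment of the limiting case $\mu_f = L_f$. I do not expect a genuine obstacle.
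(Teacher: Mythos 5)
Your proposal is correct and follows essentially the same route as the paper's own proof: the shift $g := f - \frac{\mu_f}{2}\norm{\cdot}^2$, a double application of \cref{thm:cvx_lipschitz} to obtain $1/(L_f-\mu_f)$-cocoercivity of $g'$, and the expansion/rearrangement of terms. Your extra care about the borderline case $\mu_f = L_f$ (handled by letting $\mu' \uparrow \mu_f$) is a sensible addition that the paper's proof passes over silently, but it does not change the substance of the argument.
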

\begin{proof}
	We define $g := f - \frac{\mu_f}{2} \norm{\cdot}^2$.
	Thus, we have $g'(y) = f'(y) - \mu_f R y$.
	The strong monotonicity of $f'$ implies that $g$ is convex.
	From \cref{thm:cvx_lipschitz}
	we infer that
	$\frac{L_f}{2} \norm{\cdot}^2 - f = \frac{L_f-\mu_f}{2} \norm{\cdot}^2 - g$
	is convex.
	Applying \cref{thm:cvx_lipschitz} again
	shows that $g'$ is $1/(L_f-\mu_f)$-cocoercive,
	i.e.,
	for arbitrary $y_1, y_2 \in \YY$
	we have
	\begin{align*}
		&
		(L_f - \mu_f) \parens*{
			\dual{f'(y_2) - f'(y_1)}{y_2 - y_1}
			-
			\mu_f \norm{y_2 - y_1}^2
		}
		\\&\qquad
		=
		(L_f - \mu_f) \dual{g'(y_2) - g'(y_1)}{y_2 - y_1}
		\\&\qquad
		\ge
		\norm{g'(y_2) - g'(y_1)}^2
		=
		\norm{f'(y_2) - f'(y_1)}^2
		-
		2 \mu_f
		\dual{f'(y_2) - f'(y_1)}{y_2 - y_1}
		+
		\mu_f^2
		\norm{y_2 - y_1}^2
		.
	\end{align*}
	Rearranging terms yields
	the claim.
\end{proof}

\section{Generalized equations}
\label{sec:gen_eq}
We consider the solution mapping of the generalized equation \eqref{eq:p}.
We show that solutions
are locally stable and directionally differentiable under suitable assumptions.

\subsection{Local solvability}
\label{subsec:local_solv}
We set up the standing assumptions
which allow to prove
that \eqref{eq:p}
is uniquely solvable around
a given reference solution $(y^*, u^*)$.
\begin{assumption}[Standing assumptions]
	\label{asm:standing}
	Let $(y^*, u^*) \in Y \times U$
	be
	given
	and let $\UU \subset U$ be a neighborhood of $u^*$.
	\begin{enumerate}[label=(\roman*)]
		\item
			\label{asm:standing:1}
			For all $u \in \UU$,
			$A(\cdot,u)$ is locally strongly monotone and Lipschitz
			in a neighborhood of $y^*$, uniformly in $u \in \UU$.
			To be precise, there exist constants $\varepsilon, \mu_A, L_A > 0$
			such that
			for all $y_1, y_2 \in B_{\varepsilon}(y^*)$
			and for all $u \in \UU$
			we have
			\begin{align*}
				\dual{ A(y_2, u) - A(y_1, u) }{ y_2 - y_1} &\ge \mu_A \norm{y_2 - y_1}^2,
				\\
				\norm{ A(y_2, u) - A(y_1, u)}
				&
				\le L_A \norm{y_2 - y_1}.
			\end{align*}
		\item
			\label{asm:standing:2}
			$B(\cdot, u) \colon Y \mto Y\dualspace$ is maximally monotone
			for all $u \in \UU$.
	\end{enumerate}
\end{assumption}
Here and in the sequel,
it is sufficient that $A$ is defined only on $B_\varepsilon(y^*) \times \UU$
and that $B$ is defined on $Y \times \UU$.

We check that the GE
has a unique solution in a neighborhood of $y^*$
for certain ``small'' perturbations $u$ of $u^*$.
\begin{theorem}
	\label{thm:solvability_vi}
	Let \cref{asm:standing} be satisfied
	by a solution $(y^*, u^*)$ of \eqref{eq:p}.
	We select $\rho \in (0, 2 \mu_A / L_A^2)$,
	$r \in (0,\varepsilon]$
	and set $c := \sqrt{1 - 2 \rho \mu_A + \rho^2 L_A^2} \in (0,1)$.
	Suppose that $\zeta \in Y\dualspace$
	and $u \in \UU$
	satisfy
	\begin{equation}
		\label{eq:est}
		\norm{
			J_{\rho B}(q_\rho^*, u)
			-
			J_{\rho B}(q_\rho^*, u^*)
		}
		+
		\rho \norm{A(y^*, u) - A(y^*, u^*)}
		+
		\rho \norm{\zeta}
		\le
		(1 - c) r
		,
	\end{equation}
	where
	$q_\rho^* = y^* - \rho R^{-1} A(y^*, u^*)$.
	Then,
	there exist unique solutions $y, \tilde y$
	of
	\begin{equation}
		\label{eq:sol_zeta_u}
		0 \in \zeta + A(y,u) + B(y,u),
		\quad y \in B_\varepsilon(y^*)
	\end{equation}
	and
	\begin{equation}
		\label{eq:sol_u}
		0 \in A(\tilde y,u) + B(\tilde y,u),
		\quad \tilde y \in B_\varepsilon(y^*)
		.
	\end{equation}
	These solutions satisfy
	$y, \tilde y \in B_r(y^*)$
	and
	$\norm{y - \tilde y} \le \frac{\rho}{1 - c} \norm{\zeta}$.
\end{theorem}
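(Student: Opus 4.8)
The natural approach is a fixed-point argument based on the classical projection/resolvent reformulation of the generalized equation. The key observation is that, for fixed $\rho > 0$, the inclusion $0 \in \zeta + A(y,u) + B(y,u)$ is equivalent to $y = J_{\rho B}\bigl(y - \rho R^{-1}(A(y,u) + \zeta),\, u\bigr)$, since $0 \in \rho\zeta + \rho A(y,u) + \rho B(y,u)$ can be rewritten as $0 \in R\bigl(y - (y - \rho R^{-1}(A(y,u)+\zeta))\bigr) + \rho B(y,u)$, which is exactly the defining relation of the resolvent of $\rho B(\cdot,u)$. Thus I would define the map
\begin{equation*}
	T(y) := J_{\rho B}\bigl(y - \rho R^{-1}(A(y,u) + \zeta),\, u\bigr)
\end{equation*}
on the closed ball $B_r(y^*)$ and show that it is a contraction mapping into itself; Banach's fixed-point theorem then yields the unique solution $y$ in $B_r(y^*)$. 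The solution $\tilde y$ of \eqref{eq:sol_u} is the special case $\zeta = 0$ (note that \eqref{eq:est} with $\zeta = 0$ is implied by \eqref{eq:est}, so the same hypotheses apply), and uniqueness in all of $B_\varepsilon(y^*)$ follows from a separate, simpler strong-monotonicity argument.

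The contraction estimate rests on two facts. First, $J_{\rho B}(\cdot, u)$ is nonexpansive in its first argument, being the resolvent of the maximally monotone operator $\rho B(\cdot, u)$ (this is standard firm nonexpansiveness; I would just cite it or the relevant property from \cite{BauschkeCombettes2011}). Second, the map $y \mapsto y - \rho R^{-1} A(y,u)$ is a contraction on $B_\varepsilon(y^*)$ with modulus $c = \sqrt{1 - 2\rho\mu_A + \rho^2 L_A^2}$: indeed, for $y_1, y_2 \in B_\varepsilon(y^*)$,
\begin{align*}
	\norm[\big]{(y_2 - \rho R^{-1} A(y_2,u)) - (y_1 - \rho R^{-1} A(y_1,u))}^2
	&= \norm{y_2 - y_1}^2 - 2\rho\, \dual{A(y_2,u) - A(y_1,u)}{y_2 - y_1} \\
	&\quad + \rho^2 \norm[\big]{R^{-1}(A(y_2,u) - A(y_1,u))}^2 \\
	&\le (1 - 2\rho\mu_A + \rho^2 L_A^2)\norm{y_2 - y_1}^2 = c^2 \norm{y_2 - y_1}^2,
\end{align*}
using \itemref{asm:standing:1:1} (strong monotonicity and Lipschitz, the latter transferred to $R^{-1}$ via the isometry $\norm{R^{-1}\mu} = \norm{\mu}$). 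The choice $\rho \in (0, 2\mu_A/L_A^2)$ guarantees $c \in (0,1)$. Composing, $T$ is a $c$-contraction on $B_\varepsilon(y^*)$.

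To see that $T$ maps $B_r(y^*)$ into itself, I would estimate $\norm{T(y^*) - y^*}$ and use the triangle inequality $\norm{T(y) - y^*} \le \norm{T(y) - T(y^*)} + \norm{T(y^*) - y^*} \le c\,\norm{y - y^*} + \norm{T(y^*) - y^*} \le c r + \norm{T(y^*) - y^*}$; it then suffices to show $\norm{T(y^*) - y^*} \le (1-c) r$. Here is where \eqref{eq:est} enters. Writing $q_\rho^* = y^* - \rho R^{-1} A(y^*, u^*)$ and noting that $y^* = J_{\rho B}(q_\rho^*, u^*)$ (because $(y^*,u^*)$ solves \eqref{eq:p}), I split
\begin{align*}
	\norm{T(y^*) - y^*}
	&= \norm[\big]{J_{\rho B}(y^* - \rho R^{-1}(A(y^*,u) + \zeta), u) - J_{\rho B}(q_\rho^*, u^*)} \\
	&\le \norm[\big]{J_{\rho B}(y^* - \rho R^{-1}(A(y^*,u)+\zeta), u) - J_{\rho B}(q_\rho^*, u)} + \norm[\big]{J_{\rho B}(q_\rho^*, u) - J_{\rho B}(q_\rho^*, u^*)} \\
	&\le \norm[\big]{\rho R^{-1}(A(y^*,u) - A(y^*,u^*)) + \rho R^{-1}\zeta} + \norm[\big]{J_{\rho B}(q_\rho^*, u) - J_{\rho B}(q_\rho^*, u^*)} \\
	&\le \rho\norm{A(y^*,u) - A(y^*,u^*)} + \rho\norm{\zeta} + \norm[\big]{J_{\rho B}(q_\rho^*, u) - J_{\rho B}(q_\rho^*, u^*)} \le (1-c) r,
\end{align*}
using nonexpansiveness of $J_{\rho B}(\cdot, u)$ and the isometry of $R^{-1}$; the last inequality is precisely \eqref{eq:est}. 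Since $r \le \varepsilon$, the ball $B_r(y^*)$ is contained in $B_\varepsilon(y^*)$ where $T$ is a contraction, so Banach's theorem applies and gives a unique fixed point $y \in B_r(y^*)$.

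For the final estimate $\norm{y - \tilde y} \le \frac{\rho}{1-c}\norm{\zeta}$, I would note that $y$ and $\tilde y$ are the fixed points of $T$ (with $\zeta$) and of $T_0$ (with $\zeta = 0$) respectively, and both lie in $B_r(y^*) \subset B_\varepsilon(y^*)$. Then
\begin{equation*}
	\norm{y - \tilde y} = \norm{T(y) - T_0(\tilde y)} \le \norm{T(y) - T(\tilde y)} + \norm{T(\tilde y) - T_0(\tilde y)} \le c\,\norm{y - \tilde y} + \rho\norm{\zeta},
\end{equation*}
where the bound $\norm{T(\tilde y) - T_0(\tilde y)} \le \rho\norm{R^{-1}\zeta} = \rho\norm{\zeta}$ again uses nonexpansiveness of $J_{\rho B}(\cdot,u)$; rearranging gives the claim. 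Finally, uniqueness of $y$ and $\tilde y$ within the larger ball $B_\varepsilon(y^*)$ — not just $B_r(y^*)$ — follows directly: if $y_1, y_2 \in B_\varepsilon(y^*)$ both solve \eqref{eq:sol_zeta_u}, then $0 \in A(y_2,u) - A(y_1,u) + B(y_2,u) - B(y_1,u)$, and pairing with $y_2 - y_1$ and using monotonicity of $B(\cdot,u)$ together with strong monotonicity of $A(\cdot,u)$ forces $\mu_A\norm{y_2 - y_1}^2 \le 0$, hence $y_1 = y_2$.

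The main obstacle — really the only nonroutine point — is getting the algebra of the fixed-point reformulation exactly right, in particular tracking the role of $\rho$ and the Riesz map $R$ so that the quantities appearing in \eqref{eq:est} match the output of the contraction/self-mapping estimates; everything else is a direct application of Banach's fixed-point theorem and the standard nonexpansiveness of resolvents.
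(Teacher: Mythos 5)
Your proposal is correct and follows essentially the same route as the paper: the same resolvent fixed-point reformulation, the same contraction estimate for the map $y \mapsto y - \rho R^{-1} A(y,u)$ with modulus $c$, the same use of \eqref{eq:est} to get the self-mapping property on $B_r(y^*)$, and Banach's fixed-point theorem, with the final bound $\norm{y-\tilde y}\le \frac{\rho}{1-c}\norm{\zeta}$ obtained exactly as in the paper. The only (harmless) deviation is the uniqueness step in $B_\varepsilon(y^*)$, where you argue directly via monotonicity of $B(\cdot,u)$ and strong monotonicity of $A(\cdot,u)$, whereas the paper simply reruns the fixed-point argument with $r=\varepsilon$.
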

Assumption \eqref{eq:est} ensures that \eqref{eq:sol_zeta_u} and \eqref{eq:sol_u}
are small perturbations of \eqref{eq:p} with $u = u^*$.
Later, we will see that continuity assumptions on $A$ and on (the resolvent of) $B$
can be used to verify \eqref{eq:est}, see \cref{lem:local_solve}.
\begin{proof}
	The proof is inspired by the proof of \cite[Theorem~5.6]{Brezis2011}
	which treats the special case of a variational inequality.

	We define $T \colon Y \to Y$, $y \mapsto z$
	as the solution operator of
	\begin{equation*}
		0 \in R ( z - (y - \rho R^{-1} A(y, u) - \rho R^{-1} \zeta )) + \rho B(z,u)
		,
	\end{equation*}
	i.e.,
	\begin{equation*}
		T(y)
		=
		J_{\rho B}\parens{
			y - \rho R^{-1} A(y, u) - \rho R^{-1} \zeta, u
		}
		.
	\end{equation*}
	In order to apply the Banach fixed-point theorem,
	we check that
	the mapping
	$Q \colon B_\varepsilon(y^*) \to Y$, $x \mapsto x - \rho R^{-1} A(x, u)$
	is a contraction.
	Indeed,
	\begin{align*}
		\norm{
			x - \rho R^{-1} A(x, u)
			- y + \rho R^{-1} A(y, u)
		}^2
		&=
		\norm{x - y}^2 + \norm{\rho R^{-1} (A(x,u) - A(y,u))}^2
		\\&\qquad
		-
		2 \rho \dual{A(x,u) - A(y,u)}{x - y}
		\\&
		\le
		\parens*{
			1 - 2 \rho \mu_A + \rho^2 L_A^2
		}
		\norm{x - y}^2
	\end{align*}
	holds for all $x,y \in B_\varepsilon(y^*)$
	due to \itemref{asm:standing:1}.
	Hence, $Q$ is Lipschitz with constant $c \in (0,1)$.
	Since $J_{\rho B}(\cdot, u)$
	is Lipschitz with constant $1$,
	$T \colon B_\varepsilon(y^*) \to Y$ is a contraction.

	It remains to check that $T$ maps $B_r(y^*)$
	onto itself.
	To this end, we denote the three terms
	on the left-hand side of \eqref{eq:est}
	by $\kappa_1$, $\kappa_2$ and $\kappa_3$, respectively.
	By using $y^* = J_{\rho B}(q_\rho^*, u^*)$,
	we have for an arbitrary $y \in B_r(y^*)$
	\begin{align*}
		\norm{T(y) - y^*}
		&=
		\norm{
			J_{\rho B}\parens{ Q(y) - \rho R^{-1} \zeta, u }
			-
			J_{\rho B}(q_\rho^*, u^*)
		}
		\\
		&\le
		\norm{
			J_{\rho B}\parens{ Q(y) - \rho R^{-1} \zeta, u }
			-
			J_{\rho B}(q_\rho^*, u)
		}
		+
		\norm{
			J_{\rho B}(q_\rho^*, u)
			-
			J_{\rho B}(q_\rho^*, u^*)
		}
		\\
		&\le
		\norm{
			y - \rho R^{-1} A(y, u) - \rho R^{-1} \zeta
			-
			(y^* - \rho R^{-1} A(y^*, u^*) )
		}
		+
		\kappa_1
		\\
		&\le
		\norm{
			y - \rho R^{-1} A(y, u)
			-
			(y^* - \rho R^{-1} A(y^*, u) )
		}
		+
		\rho
		\norm{
			A(y^*, u)
			-
			A(y^*, u^*)
		}
		+
		\rho \norm{\zeta}
		+
		\kappa_1
		\\
		&=
		\norm{Q(y) - Q(y^*)}
		+
		\kappa_1 + \kappa_2 + \kappa_3
		\le
		r
		.
	\end{align*}
	Hence, we have shown that $T \colon B_r(y^*) \to B_r(y^*)$
	is a contraction.

	The Banach fixed-point theorem yields a unique fixed point $y \in B_r(y^*)$
	and this is a solution of \eqref{eq:sol_zeta_u}.
	Similarly, the solvability of \eqref{eq:sol_u}
	is obtained by using the same arguments with $\zeta = 0$.
	By repeating the same argument with $r = \varepsilon$,
	we establish the uniqueness of solutions in $B_\varepsilon(y^*)$.

	For the final estimate,
	we note
	\begin{align*}
		\tilde y &= J_{\rho B}( \tilde y - \rho R^{-1} A(\tilde y,u), u),
		&
		y &= J_{\rho B}( y - \rho R^{-1} A(y,u) - \rho R^{-1} \zeta, u ).
	\end{align*}
	Thus,
	\begin{equation*}
		\norm{y - \tilde y}
		\le
		\norm{Q(y) - Q(\tilde y)} + \rho \norm{\zeta}
		\le
		c \norm{y - \tilde y} + \rho \norm{\zeta}
	\end{equation*}
	and this gives
	the desired estimate.
\end{proof}
The second term on the left-hand side of \eqref{eq:est}
becomes small if we assume some continuity of $A$
and if $u$ is close to $u^*$.
In order to control the first term,
we use a famous result by Attouch.
\begin{lemma}[{\cite[Proposition~3.60]{Attouch1984}}]
	\label{lem:conv_resolvents}
	Let $\seq{B_n}_{n \in \N}$ be a sequence of maximal monotone operators on $Y$
	and let $B_0 \colon Y \mto Y\dualspace$ be maximally monotone.
	Then, the following are equivalent.
	\begin{enumerate}[label=(\roman*)]
		\item
			There exists $\rho_0 > 0$ such that for all $y \in Y$ we have $J_{\rho_0 B_n}(y) \to J_{\rho_0 B_0}(y)$.
		\item
			For all $\rho > 0$ and all $y \in Y$ we have $J_{\rho B_n}(y) \to J_{\rho B_0}(y)$.
	\end{enumerate}
\end{lemma}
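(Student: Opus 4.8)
The implication from (ii) to (i) is trivial, so the plan is to derive (ii) from (i). The idea is to transfer the convergence of resolvents from the fixed parameter $\rho_0$ to an arbitrary parameter $\rho > 0$, using the resolvent identity: for a maximally monotone $B \colon Y \mto Y\dualspace$ and $\rho, \sigma > 0$ one has
\begin{equation*}
	J_{\rho B}(y) = J_{\sigma B}\parens[\Big]{ \tfrac{\sigma}{\rho} y + \parens[\big]{1 - \tfrac{\sigma}{\rho}} J_{\rho B}(y) }
	\qquad \text{for all } y \in Y .
\end{equation*}
I would establish this by unwinding the definitions: if $z = J_{\rho B}(y)$, i.e.\ $\tfrac{1}{\rho} R(y - z) \in B(z)$, then the point $w := \tfrac{\sigma}{\rho} y + (1 - \tfrac{\sigma}{\rho}) z$ satisfies $\tfrac{1}{\sigma} R(w - z) = \tfrac{1}{\rho} R(y - z) \in B(z)$, so $z = J_{\sigma B}(w)$. (The only subtlety here is the bookkeeping with the Riesz map in $J_{\rho B} = (R + \rho B)^{-1} R$; this, as well as the $1$-Lipschitz continuity of $J_{\rho B}$, follows from monotonicity of $B$ together with the Riesz isomorphism.)

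Next, I would fix $y \in Y$ and $\rho > 0$, write $p_n := J_{\rho B_n}(y)$ for $n \in \{0\} \cup \N$, and set $w_n := \tfrac{\rho_0}{\rho} y + (1 - \tfrac{\rho_0}{\rho}) p_n$, so that the resolvent identity gives $p_n = J_{\rho_0 B_n}(w_n)$ for all $n$ (including $n = 0$, where it yields $J_{\rho_0 B_0}(w_0) = p_0$). Using nonexpansiveness of $J_{\rho_0 B_n}$ and then assumption (i) at the \emph{fixed} argument $w_0 \in Y$, one obtains
\begin{align*}
	\norm{p_n - p_0}
	&\le \norm{J_{\rho_0 B_n}(w_n) - J_{\rho_0 B_n}(w_0)} + \norm{J_{\rho_0 B_n}(w_0) - J_{\rho_0 B_0}(w_0)}
	\\
	&\le \abs[\big]{1 - \tfrac{\rho_0}{\rho}} \, \norm{p_n - p_0} + \varepsilon_n ,
\end{align*}
with $\varepsilon_n := \norm{J_{\rho_0 B_n}(w_0) - J_{\rho_0 B_0}(w_0)} \to 0$. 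If $\rho > \rho_0 / 2$, then $\abs{1 - \rho_0/\rho} < 1$, the first term on the right can be absorbed into the left-hand side, and $\norm{p_n - p_0} \to 0$ follows. Since $y$ was arbitrary, assertion (i) then holds with $\rho_0$ replaced by any $\rho \in (\rho_0/2, \infty)$.

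Finally, I would iterate on the parameter: let $S \subset (0,\infty)$ be the set of all $\rho > 0$ for which $J_{\rho B_n}(y) \to J_{\rho B_0}(y)$ holds for every $y \in Y$. The previous step shows that $\rho' \in S$ implies $(\rho'/2, \infty) \subset S$; since $\rho_0 \in S$ by hypothesis, an induction gives $(2^{-k} \rho_0, \infty) \subset S$ for every $k \in \N$, hence $S = (0, \infty)$, which is exactly (ii). I do not expect a genuine analytic obstacle in this argument: the only places demanding a little care are the derivation of the resolvent identity and the nonexpansiveness of $J_{\rho B}$ in the present $R$-weighted formulation, both of which are short computations. (The statement is of course classical --- it is the cited \cite[Proposition~3.60]{Attouch1984} --- so the above merely sketches a self-contained route to it.)
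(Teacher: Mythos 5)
Your argument is correct. Note that the paper itself offers no proof of this lemma --- it is quoted verbatim from \cite[Proposition~3.60]{Attouch1984} --- so there is nothing to match against; what you have written is a valid self-contained derivation, and it is in fact the classical argument behind the cited result. The resolvent identity is stated and verified correctly (the Riesz-map bookkeeping goes through exactly as you indicate), the absorption step works for every $\rho > \rho_0/2$ because then $\abs{1 - \rho_0/\rho} < 1$, and the bootstrapping on the parameter is sound: if $(2^{-(k-1)}\rho_0,\infty) \subset S$ and $\rho > 2^{-k}\rho_0$, you can pick $\rho' \in S \cap (2^{-(k-1)}\rho_0, 2\rho)$, which is nonempty since $2\rho > 2^{-(k-1)}\rho_0$, and conclude $\rho \in S$; hence $S = (0,\infty)$. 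The only ingredients you take for granted --- single-valuedness and everywhere-definedness of the resolvents (Minty) and their nonexpansiveness --- are standard and are also used freely elsewhere in the paper, so no gap remains.
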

\begin{corollary}
	\label{cor:cts_B}
	Suppose that for every $v \in Y$, the mapping $J_{B}(v, \cdot) \colon \UU \to Y$
	is continuous at $u^*$.
	Then, for all $\rho > 0$ and $v \in Y$,
	$J_{\rho B}(v, \cdot) \colon \UU \to Y$
	is continuous at $u^*$.
\end{corollary}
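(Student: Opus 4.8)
The plan is to reduce the claim to Attouch's theorem, \cref{lem:conv_resolvents}, applied along sequences. Since $U$ is a Banach space, hence metrizable, continuity of $J_{\rho B}(v,\cdot) \colon \UU \to Y$ at $u^*$ is equivalent to sequential continuity at $u^*$. Thus it is enough to fix $\rho > 0$, $v \in Y$ and an arbitrary sequence $\seq{u_n}_{n \in \N}$ in $U$ with $u_n \to u^*$, and to verify $J_{\rho B}(v, u_n) \to J_{\rho B}(v, u^*)$. Since $\UU$ is a neighborhood of $u^*$, we may assume $u_n \in \UU$ for all $n$, so that all of the resolvents below are well defined.

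Next, I would apply \cref{lem:conv_resolvents} with $B_n := B(\cdot, u_n)$ for $n \in \N$ and $B_0 := B(\cdot, u^*)$. By \itemref{asm:standing:2}\ — equivalently, by the very definition of a parametrized maximally monotone operator, since $u_n, u^* \in \UU$\ — the operators $B_n$ and $B_0$ are maximally monotone on $Y$, so the standing hypotheses of the lemma are satisfied. The assumption of the corollary, specialized to the single value $\rho_0 = 1$ (using $J_{1 \cdot B} = J_B$), states that $J_{B_n}(y) = J_B(y, u_n) \to J_B(y, u^*) = J_{B_0}(y)$ for every $y \in Y$; this is precisely the first assertion of \cref{lem:conv_resolvents}. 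Hence its second assertion holds as well: for every $\rho > 0$ and every $y \in Y$ we have $J_{\rho B_n}(y) \to J_{\rho B_0}(y)$. Evaluating at $y = v$ gives $J_{\rho B}(v, u_n) \to J_{\rho B}(v, u^*)$, and since the sequence $\seq{u_n}_{n \in \N}$ was arbitrary, $J_{\rho B}(v, \cdot)$ is continuous at $u^*$.

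There is essentially no hard step; the statement is a direct consequence of \cref{lem:conv_resolvents}, whose entire content is exactly the passage from the scaling $\rho_0 = 1$ to all $\rho > 0$. The only points that deserve a word of care are the reduction from topological to sequential continuity, which relies on the metrizability of $U$, and the verification that the maximal monotonicity required by \cref{lem:conv_resolvents} is available, which is furnished by \itemref{asm:standing:2}. Everything else is bookkeeping with the notation $J_{\rho B}(v, u) = J_{\rho B(\cdot, u)}(v)$.
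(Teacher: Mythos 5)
Your proof is correct and follows exactly the paper's route: sequential continuity plus \cref{lem:conv_resolvents} applied to $B_n := B(\cdot, u_n)$, $B_0 := B(\cdot, u^*)$ with $\rho_0 = 1$. The extra bookkeeping you supply (metrizability of $U$, $u_n \in \UU$ eventually, maximal monotonicity via \itemref{asm:standing:2}) only spells out what the paper leaves implicit.
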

\begin{proof}
	We have to show that $J_{\rho B}(v, u_n) \to J_{\rho B}(v, u^*)$
	for all sequences
	$\seq{u_n}_{n \in \N} \subset \UU$ with $u_n \to u^*$.
	This follows directly from \cref{lem:conv_resolvents}
	by setting $B_n := B(\cdot, u_n)$ and $B_0 := B(\cdot, u^*)$.
\end{proof}
Hence, if
\cref{asm:standing}
and the assumption of \cref{cor:cts_B}
are satisfied
and if $A(y^*, \cdot) \colon \UU \to Y$ is continuous at $u^*$,
the estimate \eqref{eq:est} holds if $\zeta$ is small enough
and if $u$ is sufficiently close to $u^*$.
For later reference, we also give a directional version of this statement.
\begin{lemma}
	\label{lem:local_solve}
	Let \cref{asm:standing} be satisfied
	by a solution $(y^*, u^*)$ of \eqref{eq:p}
	and choose $\rho, r, c$ as in \cref{thm:solvability_vi}.
	Further, let $h \in U$ be arbitrary.
	We assume that
	$t \mapsto A(y^*, u^* + t h) \in Y\dualspace$
	and
	$t \mapsto J_B(v, u^* + t h) \in Y$
	are right-continuous at $t = 0$ for all $v \in Y$.
	Then, the estimate \eqref{eq:est}
	is satisfied by $u = u^* + t h$
	and $\zeta \in Y\dualspace$
	if $t > 0$ and $\norm{\zeta}$
	are small enough.
\end{lemma}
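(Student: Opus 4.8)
The plan is to split the left-hand side of \eqref{eq:est} into its three summands — call them $\kappa_1$, $\kappa_2$, $\kappa_3$ as in the proof of \cref{thm:solvability_vi}, so that $\kappa_1 = \norm{J_{\rho B}(q_\rho^*, u) - J_{\rho B}(q_\rho^*, u^*)}$, $\kappa_2 = \rho\norm{A(y^*,u) - A(y^*,u^*)}$ and $\kappa_3 = \rho\norm{\zeta}$ — and to show that each of them is at most $(1-c)r/3$ provided $t>0$ and $\norm{\zeta}$ are small. Throughout we take $t$ small enough that $u = u^* + th \in \UU$, which is possible since $\UU$ is a neighborhood of $u^*$.

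The terms $\kappa_2$ and $\kappa_3$ are handled immediately: $\kappa_3 \le (1-c)r/3$ as soon as $\norm{\zeta} \le (1-c)r/(3\rho)$, and $\kappa_2 \to 0$ as $t \downarrow 0$ by the assumed right-continuity of $t \mapsto A(y^*, u^*+th)$ at $t = 0$, so $\kappa_2 \le (1-c)r/3$ for all small $t>0$. The genuinely new point is the term $\kappa_1$: the hypothesis only controls the resolvent $J_B$ (the resolvent at scale $1$), whereas \eqref{eq:est} features the resolvent $J_{\rho B}$ at the fixed scale $\rho$ chosen in \cref{thm:solvability_vi}. I would bridge this gap along the lines of the proof of \cref{cor:cts_B}: for an arbitrary sequence $t_n \downarrow 0$, set $B_n := B(\cdot, u^*+t_n h)$ and $B_0 := B(\cdot, u^*)$, which are maximally monotone by \itemref{asm:standing:2}; the right-continuity of $t \mapsto J_B(v, u^*+th)$ yields $J_{B_n}(v) \to J_{B_0}(v)$ for every $v \in Y$; and \cref{lem:conv_resolvents}, applied with $\rho_0 = 1$, upgrades this to $J_{\rho B_n}(v) \to J_{\rho B_0}(v)$ for every $v \in Y$, in particular for $v = q_\rho^*$. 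Since the sequence $t_n \downarrow 0$ was arbitrary, it follows that $\kappa_1 \to 0$ as $t \downarrow 0$, so $\kappa_1 \le (1-c)r/3$ for all small $t > 0$.

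Combining the three estimates, the left-hand side of \eqref{eq:est} does not exceed $(1-c)r$ once $t > 0$ and $\norm{\zeta}$ are chosen small enough, which proves the lemma. I expect the only real obstacle to be the change of scale in the resolvent just discussed; Attouch's lemma disposes of it cleanly, and the remainder is the triangle inequality together with the assumed one-sided continuity of the data.
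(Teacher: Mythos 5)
Your proposal is correct and matches the argument the paper intends: the lemma is stated without a separate proof precisely because it follows from the preceding discussion, namely the triangle-type splitting of \eqref{eq:est}, the assumed right-continuity of $t \mapsto A(y^*, u^*+th)$, the smallness of $\rho\norm{\zeta}$, and the passage from the scale-$1$ resolvent $J_B$ to $J_{\rho B}$ via Attouch's result (\cref{lem:conv_resolvents}, exactly as in \cref{cor:cts_B}, applied along sequences $t_n \searrow 0$). Your handling of the scale change is the one genuinely nontrivial point, and you resolve it the same way the paper does.
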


\subsection{Directional differentiability}
\label{subsec:dir_dif}
In order to prove directional differentiability of the solution mapping,
we need some differentiability assumptions concerning the
mappings $A$ and $B$.

\begin{assumption}[Differentiability assumptions]
	\label{asm:diff}
	In addition to \cref{asm:standing},
	we suppose the following.
	\begin{enumerate}[label=(\roman*)]
		\item
			\label{asm:diff:1}
			$A$ is directionally differentiable at $(y^*, u^*)$.
		\item
			\label{asm:diff:2}
			$J_B$ is directionally differentiable at $(q^*, u^*)$
			with $q^* = y^* - R^{-1} A(y^*, u^*)$.
	\end{enumerate}
\end{assumption}

Interestingly,
the next result shows that
\itemref{asm:diff:2}
already implies
that the directional derivative of $J_B$
is again a resolvent of a parametrized maximally monotone operator.
In the setting that $B$ is a subdifferential and independent of $u$,
this result follows from \cite[Theorems~3.9, 4.3]{Do1992}.
\begin{lemma}
	\label{lem:max_mon_dif_dep}
	Let \cref{asm:diff} be satisfied
	by a solution $(y^*, u^*)$ of \eqref{eq:p}.
	We further set
	\(
		\xi^*
		:=
		-A(y^*, u^*) \in B(y^*, u^*)
	\).
	Then,
	the operator
	$D B (y^*, u^* \mathbin{|} \xi^* ) \colon Y \times U \mto Y\dualspace$,
	\begin{equation}
		\label{eq:def_db}
		DB(y^*, u^* \mathbin{|} \xi^*)(\delta, h)
		:=
		\set{
			R ( k - \delta )
			\given
			k \in Y, \delta = J_B'(q^*, u^*; k, h)
		}
		\qquad
		\forall (\delta,h) \in Y \times U
		,
	\end{equation}
	is
	a
	parametrized maximally monotone
	operator
	and we have
	\begin{equation}
		\label{eq:deriv_is_resolv}
		J_B'(q^*, u^*; \cdot)
		=
		J_{D B (y^*, u^* \mathbin{|} \xi^* )},
	\end{equation}
	i.e., $\delta = J_B'(q^*, u^*; k, h) = J_{D B (y^*, u^* \mathbin{|} \xi^* )}(k, h)$
	if and only if $\delta$ solves
	\begin{equation*}
		0 \in R( \delta - k ) + D B (y^*, u^* \mathbin{|} \xi^* )(\delta, h).
	\end{equation*}
\end{lemma}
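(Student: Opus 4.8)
The plan is to reduce the whole statement to the classical fact that, in a Hilbert space, a firmly nonexpansive self-map defined on the entire space is precisely the resolvent of a uniquely determined maximally monotone operator. Fix $h \in U$ and consider $\phi_h \colon Y \to Y$, $\phi_h(k) := J_B'(q^*, u^*; k, h)$, which is well defined on all of $Y$ by \itemref{asm:diff:2}. Since $q^* = y^* - R^{-1} A(y^*, u^*)$ and $\xi^* = -A(y^*, u^*) \in B(y^*, u^*)$ we have $y^* = J_B(q^*, u^*)$, hence the difference quotients $t^{-1}\parens*{J_B(q^* + t k, u^* + t h) - y^*}$ converge to $\phi_h(k)$ as $t \downarrow 0$. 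First I would show that $\phi_h$ is firmly nonexpansive, i.e.
\begin{equation*}
	\norm{\phi_h(k_1) - \phi_h(k_2)}^2 \le \dual{R(k_1 - k_2)}{\phi_h(k_1) - \phi_h(k_2)}
	\qquad \forall k_1, k_2 \in Y .
\end{equation*}
For this I would use that, for \emph{every} fixed parameter $u$, the resolvent $J_B(\cdot, u)$ is firmly nonexpansive, which is an immediate consequence of the monotonicity of $B(\cdot, u)$ together with $R\parens*{q_i - J_B(q_i, u)} \in B(J_B(q_i, u), u)$. Applying this with $q_i = q^* + t k_i$ and the \emph{same} second argument $u = u^* + t h$, cancelling $y^*$ inside both norms, dividing by $t^2$, and letting $t \downarrow 0$ gives the displayed inequality by continuity of $\norm{\cdot}$ and of the duality pairing.

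Next I would identify the operator whose resolvent is $\phi_h$. Writing $M_h := DB(y^*, u^* \mathbin{|} \xi^*)(\cdot, h)$, the definition \eqref{eq:def_db} says $M_h(\delta) = \set{ R(k - \delta) \given k \in Y, \phi_h(k) = \delta }$, that is, $M_h = R \parens*{\phi_h^{-1} - \id}$ in the usual Hilbert-space sense. Monotonicity of $M_h$ follows directly from firm nonexpansiveness: for $\eta_i = R(k_i - \delta_i) \in M_h(\delta_i)$ with $\delta_i = \phi_h(k_i)$ one computes $\dual{\eta_1 - \eta_2}{\delta_1 - \delta_2} = \dual{R(k_1 - k_2)}{\phi_h(k_1) - \phi_h(k_2)} - \norm{\delta_1 - \delta_2}^2 \ge 0$; in particular the associated resolvent is single-valued. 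For maximality I would invoke Minty's theorem in the form ``a monotone operator $M \colon Y \mto Y\dualspace$ with $R + M$ surjective is maximally monotone'': given $\eta \in Y\dualspace$, set $k := R^{-1} \eta$ and $\delta := \phi_h(k)$, so that $R(k - \delta) \in M_h(\delta)$ by construction and therefore $\eta = R\delta + R(k - \delta) \in (R + M_h)(\delta)$. Hence $M_h$ is maximally monotone for every $h \in U$, which is exactly the claim that $DB(y^*, u^* \mathbin{|} \xi^*)$ is a parametrized maximally monotone operator, and its resolvent $J_{M_h}$ is single-valued and defined on all of $Y$.

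Finally, \eqref{eq:deriv_is_resolv} is the mere translation of the definitions: $\delta = J_{M_h}(k)$ means $R(k - \delta) \in M_h(\delta)$, and by the definition of $M_h$ this holds if and only if there is $k' \in Y$ with $\phi_h(k') = \delta$ and $R(k' - \delta) = R(k - \delta)$; injectivity of $R$ yields $k' = k$, whence $\delta = \phi_h(k) = J_B'(q^*, u^*; k, h)$, while conversely $\delta = \phi_h(k)$ trivially gives $0 \in R(\delta - k) + M_h(\delta)$. I expect the only genuinely delicate point to be the passage to the limit in the first step: one has to evaluate both resolvents at the \emph{same} second argument $u^* + t h$ so that the one-variable firm nonexpansiveness of $J_B(\cdot, u)$ is applicable, and to keep the Riesz map $R$ in the correct positions so that the duality pairing and the Hilbert-space norm are matched; the remaining manipulations are routine.
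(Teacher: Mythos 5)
Your proposal is correct and follows essentially the same route as the paper: firm nonexpansiveness of $J_B(\cdot, u^*+th)$ applied to the difference quotients, passage to the limit $t \searrow 0$ to get firm nonexpansiveness of $k \mapsto J_B'(q^*,u^*;k,h)$, which yields monotonicity of $DB(y^*,u^*\mathbin{|}\xi^*)(\cdot,h)$, followed by Minty's theorem with exactly the same surjectivity construction $\delta = J_B'(q^*,u^*;R^{-1}\eta,h)$, and a direct unravelling of the definitions for \eqref{eq:deriv_is_resolv}. The only cosmetic difference is that you phrase the monotonicity step via the explicit inner-product identity for $R(\phi_h^{-1}-\id)$ rather than rearranging the limiting inequality, which is the same computation.
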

We have chosen such a complicated name
for the linearization of $B$,
since it will turn out that
this linearization coincides with the so-called
proto-derivative of $B$,
see \cref{subsec:proto}.
\begin{proof}
	Let us check the monotonicity w.r.t.\ the parameter $\delta$.
	For fixed $h \in U$ and arbitrary $k_1, k_2 \in Y$, we have
	\begin{align*}
		&
		\norm*{
			\frac{J_B(q^* + t k_1, u^* + t h) - J_B(x)}{t}
			-
			\frac{J_B(q^* + t k_2, u^* + t h) - J_B(x)}{t}
		}^2
		\\
		&\qquad=\frac1{t^2}
		\norm{J_B(q^* + t k_1, u^* + t h) - J_B(q^* + t k_2, u^* + t h)}^2
		\\
		&\qquad\le
		\frac1{t^2}
		\innerprod{J_B(q^* + t k_1, u^* + t h) - J_B(q^* + t k_2, u^* + t h)}{t \, (k_1 - k_2)}
		\\
		&\qquad=
		\innerprod*{
			\frac{J_B(q^* + t k_1, u^* + t h) - J_B(x)}{t}
			-
			\frac{J_B(q^* + t k_2, u^* + t h) - J_B(x)}{t}
		}{k_1 - k_2}
		.
	\end{align*}
	Here,
	the inequality follows
	from the fact that resolvents
	of maximally monotone operators
	are firmly non-expansive.
	By taking the limit
	$t \searrow 0$, we find
	\begin{equation*}
		\norm{ J_B'(q^*, u^*; k_1, h) - J_B'(q^*, u^*; k_2, h)}^2
		\le
		\innerprod{
			J_B'(q^*, u^*; k_1, h) - J_B'(q^*, u^*; k_2, h)
		}{ k_1 - k_2 }
		.
	\end{equation*}
	Hence,
	\begin{equation*}
		\innerprod*{
			J_B'(q^*, u^*; k_1, h) - J_B'(q^*, u^*; k_2, h)
		}
		{
			k_1 - J_B'(q^*, u^*; k_1 , h)
			-
				k_2 + J_B'(q^*, u^*; k_2 , h)
		}
		\ge
		0,
	\end{equation*}
	i.e.,
	\begin{equation*}
		\dual*{
			R ( k_1 - \delta_1 ) - R ( k_2 - \delta_2 )
		}
		{
			\delta_1 - \delta_2
		}
		\ge
		0,
	\end{equation*}
	where $\delta_i := J_B'(q^*, u^*; k_i, h)$.
	This shows monotonicity of $DB(y^*, u^* \mathbin{|} \xi^*)(\cdot, h)$.

	Minty's theorem, see \cite[Theorem~21.1]{BauschkeCombettes2011},
	implies that the operator $DB(y^*, u^* \mathbin{|} \xi^*)(\cdot, h)$
	is maximally monotone
	if and only if
	$R + DB(y^*, u^* \mathbin{|} \xi^* )(\cdot, h)$
	is surjective.
	This, however,
	is obvious:
	for an arbitrary $\zeta \in Y\dualspace$,
	we can set $\delta = J_B'(q^*, u^*; R^{-1}\zeta, h)$
	and have
	$\zeta \in R \delta + DB(y^*, u^* \mathbin{|} \xi^* )(\delta, h)$.

	Finally, \eqref{eq:deriv_is_resolv} follows from a straightforward calculation.
\end{proof}
\begin{remark}
	\label{rem:weak_derivative}
	Minimal changes to the proof
	show
	that
	the assertion of \cref{lem:max_mon_dif_dep}
	remains true if we only assume that $J_B$ is weakly directionally differentiable
	at $(q^*, u^*)$,
	i.e., if
	\begin{equation*}
		\frac{ J_B( q^* + t k, u^* + t h) - J( q^*, u^* )}{t}
		\weakly
		J_B'(q^*, u^*; k, h)
		\qquad\text{in $Y$ as $t \searrow 0$}
	\end{equation*}
	for all $(k,h) \in Y \times U$.
\end{remark}

Next, we apply \cref{lem:max_mon_dif_dep}
to the normal cone mapping of a polyhedric set.
\begin{proposition}
	\label{prop:polyhedric}
	Suppose that $K \subset Y$ is polyhedric
	and $B(\cdot, u) := N_K$ is the normal cone mapping to $K$
	(independent of $u$).
	Then, \itemref{asm:diff:2} is satisfied
	and we have
	\begin{equation*}
		DB( y^* \mathbin{|} \xi^* )(\delta)
		=
		N_{\KK}(\delta)
		=
		\begin{cases}
			\KK\polar \cap \delta\anni & \text{if } \delta \in \KK, \\
			\emptyset & \text{if } \delta \not\in \KK,
		\end{cases}
	\end{equation*}
	where $\KK = T_K(y^*) \cap (\xi^*)\anni$
	denotes the critical cone.
	That is, $DB(y^* \mathbin{|} \xi^*)$
	is the normal cone mapping to the critical cone $\KK$.
	Here, we have suppressed the arguments $u^*$ and $h$.
\end{proposition}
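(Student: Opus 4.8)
The plan is to invoke \cref{lem:max_mon_dif_dep} with $B(\cdot, u) = N_K$ independent of $u$, so that the parameter $h$ plays no role and we may suppress it throughout. The key input needed to apply that lemma is \itemref{asm:diff:2}, i.e.\ that $J_B = J_{N_K} = \Proj_K$ is directionally differentiable at $q^* = y^* - R^{-1}A(y^*,u^*)$. This is precisely where polyhedricity enters: by the classical results of \cite{Mignot1976,Haraux1977} (see also \cite{Wachsmuth2016:2}), the metric projection onto a polyhedric set $K$ is directionally differentiable at every point, and its directional derivative at $q^*$ in direction $k$ equals the projection onto the critical cone $\KK := T_K(y^*) \cap (\xi^*)\anni$, namely
\begin{equation*}
	\Proj_K'(q^*; k) = \Proj_{\KK}(k),
\end{equation*}
where $\xi^* = -A(y^*,u^*) = R(q^* - y^*) \in N_K(y^*)$ and $y^* = \Proj_K(q^*)$. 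Here one must recall that $\KK$ is again a closed convex cone (polyhedricity guarantees $\KK = \overline{T_K(y^*) \cap (\xi^*)\anni}$ is closed, being a "polyhedric" object), so $\Proj_{\KK}$ is well-defined and is itself a resolvent, namely $\Proj_{\KK} = J_{N_{\KK}}$.

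Having established \itemref{asm:diff:2}, I would plug $J_B'(q^*; k) = \Proj_{\KK}(k)$ into the definition \eqref{eq:def_db} of $DB(y^* \mathbin{|} \xi^*)$. By that definition,
\begin{equation*}
	DB(y^* \mathbin{|} \xi^*)(\delta) = \set{ R(k - \delta) \given k \in Y,\ \delta = \Proj_{\KK}(k) }.
\end{equation*}
Now the standard variational characterization of the projection onto the closed convex cone $\KK$ says that $\delta = \Proj_{\KK}(k)$ holds if and only if $\delta \in \KK$, $R(k-\delta) \in \KK\polar$, and $\dual{R(k-\delta)}{\delta} = 0$ — equivalently $R(k - \delta) \in N_{\KK}(\delta) = \KK\polar \cap \delta\anni$ when $\delta \in \KK$, and there is no such $k$ when $\delta \notin \KK$. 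Running through the two cases: if $\delta \notin \KK$, the set is empty; if $\delta \in \KK$, then as $k$ ranges over all of $Y$, the element $\mu := R(k-\delta)$ ranges over exactly $N_{\KK}(\delta)$, because given any $\mu \in \KK\polar \cap \delta\anni$ we may set $k := \delta + R^{-1}\mu$ and check $\Proj_{\KK}(k) = \delta$ via the same variational inequality. This yields $DB(y^* \mathbin{|} \xi^*)(\delta) = N_{\KK}(\delta)$, which is the claim, and the closed-form expression $N_{\KK}(\delta) = \KK\polar \cap \delta\anni$ for $\delta \in \KK$ (and $\emptyset$ otherwise) is just the definition of the normal cone of a closed convex cone.

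I do not anticipate a serious obstacle here; the proposition is essentially a dictionary entry translating the known differentiability of projections onto polyhedric sets into the language of \cref{lem:max_mon_dif_dep}. The one point requiring a little care is making sure the hypotheses of \cref{lem:max_mon_dif_dep} are genuinely met — in particular that \cref{asm:standing} holds, so that $N_K$ is maximally monotone (true, as the subdifferential of the proper lsc convex indicator $\delta_K$) and that $J_B$ is directionally differentiable at the specific point $q^*$ tied to the reference solution. The second mild subtlety is verifying that the critical cone $\KK$ inherits enough structure (closedness, and in fact polyhedricity) so that $\Proj_{\KK}$ makes sense and the cone-projection variational inequality applies verbatim; this is part of the definition of polyhedricity of $K$ at $y^*$ with respect to $\xi^*$ and is recalled in the cited references. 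Everything else is the routine case distinction on $\delta \in \KK$ versus $\delta \notin \KK$ sketched above.
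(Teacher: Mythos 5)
Your proposal is correct and follows essentially the same route as the paper's own proof: cite \cite{Mignot1976,Haraux1977} for the directional differentiability of $J_B = \Proj_K$ with $\Proj_K'(q^*;k) = \Proj_{\KK}(k)$, insert this into \eqref{eq:def_db}, and identify the resulting set with $N_{\KK}(\delta)$ via the variational characterization of the projection onto the closed convex cone $\KK$ (your explicit two-way case distinction just spells out what the paper compresses into ``the claim follows''). The only small inaccuracy is your aside that polyhedricity is what makes $\KK$ closed: the critical cone $T_K(y^*) \cap (\xi^*)\anni$ is automatically closed and convex as the intersection of the tangent cone with a closed subspace, and polyhedricity is needed only for the Mignot--Haraux differentiability result itself.
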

\begin{proof}
	From \cite{Mignot1976,Haraux1977}
	we get the directional differentiability
	of $J_B = \Proj_K$
	and
	\begin{equation*}
		\Proj_K'(q^*; k)
		=
		\Proj_{\KK}(k)
		.
	\end{equation*}
	Using \eqref{eq:def_db},
	we have
	\begin{align*}
		DB(y^* \mathbin{|} \xi^*)(\delta)
		&:=
		\set{
			R ( k - \delta )
			\given
			k \in Y, \delta = \Proj_{\KK}(k)
		}
		\\
		&\phantom{:}=
		\set{
			R ( k - \delta )
			\given
			k \in Y,
			\delta \in \KK,
			R( k - \delta ) \in \KK\polar,
			\dual{ R(k - \delta)}{\delta} = 0
		}
	\end{align*}
	and the claim follows.
\end{proof}

\begin{theorem}
	\label{thm:diff_S}
	Suppose that \cref{asm:diff} is satisfied
	by a solution $(y^*, u^*)$ of \eqref{eq:p}.
	We denote by $S$ the local solution mapping of \eqref{eq:p},
	cf.\ \cref{thm:solvability_vi,lem:local_solve}.
	Then, $S$ is directionally differentiable at $u^*$.
	For
	$h \in U$,
	the derivative
	$\delta = S'(u^*; h)$
	is the unique solution of
	\begin{equation}
		\label{eq:linearized}
		0 \in A'(y^*,u^*; \delta, h) +
		DB(y^*, u^* \mathbin{|} \xi^* )(\delta, h)
		,
	\end{equation}
	where $\xi^* = -A(y^*, u^*)$.
\end{theorem}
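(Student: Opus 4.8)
The plan is to reformulate both the generalized equation \eqref{eq:p} and its linearization \eqref{eq:linearized} as fixed-point equations for the resolvent $J_B$, in the spirit of the proof of \cref{thm:solvability_vi}, and then pass to the limit along difference quotients. Concretely, fix $\rho \in (0, 2\mu_A/L_A^2)$ as in \cref{thm:solvability_vi}. For small $t > 0$ set $u_t := u^* + t h$; by \cref{lem:local_solve} the point $y_t := S(u_t)$ exists, lies in $B_r(y^*)$ for $r$ small, and is the unique fixed point of the contraction
\begin{equation*}
	y \mapsto J_{\rho B}\parens[\big]{ y - \rho R^{-1} A(y, u_t), u_t }.
\end{equation*}
In particular $y_t = J_{\rho B}(q_t, u_t)$ with $q_t := y_t - \rho R^{-1} A(y_t, u_t)$, and similarly $y^* = J_{\rho B}(q^*_\rho, u^*)$ with $q^*_\rho = y^* - \rho R^{-1} A(y^*, u^*)$. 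The first step is to establish the Lipschitz estimate $\norm{y_t - y^*} = O(t)$: subtracting the two fixed-point identities, using that $J_{\rho B}(\cdot, u_t)$ is $1$-Lipschitz, that $Q_t \colon y \mapsto y - \rho R^{-1} A(y, u_t)$ is a $c$-contraction (with $c<1$ from \cref{asm:standing}), and the right-continuity/directional-differentiability assumptions to bound $\norm{J_{\rho B}(q^*_\rho, u_t) - J_{\rho B}(q^*_\rho, u^*)}$ and $\norm{A(y^*, u_t) - A(y^*, u^*)}$ by $O(t)$, one solves for $\norm{y_t - y^*}$ and gets the bound. This also shows the difference quotients $\delta_t := (y_t - y^*)/t$ are bounded.

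The second step is to identify any limit of $\delta_t$ as a solution of \eqref{eq:linearized}. Write out
\begin{equation*}
	y_t = J_{\rho B}\parens[\big]{ y^* - \rho R^{-1} A(y^*, u^*) + t\, w_t, u^* + t h },
\end{equation*}
where $w_t := \frac{1}{t}\parens[\big]{(y_t - y^*) - \rho R^{-1}(A(y_t,u_t) - A(y^*,u^*))}$. Using the $O(t)$ bound from Step 1 together with the directional differentiability of $A$ at $(y^*,u^*)$ (\cref{asm:diff}\,\itemref{asm:diff:1}), along any subsequence $t_n \searrow 0$ with $\delta_{t_n} \to \delta$ one has $w_{t_n} \to \delta - \rho R^{-1} A'(y^*,u^*;\delta,h)$ in $Y$. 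Since $y_t - y^* = J_B$ evaluated at the perturbed argument minus $J_B(q^*_\rho,u^*)$ — after rescaling $\rho B$ back, this is exactly the difference quotient defining $J_B'$ — the directional differentiability of $J_B$ at $(q^*,u^*)$ (\cref{asm:diff}\,\itemref{asm:diff:2}) yields
\begin{equation*}
	\delta = J_B'\parens[\big]{q^*, u^*;\ \delta - \rho R^{-1} A'(y^*,u^*;\delta,h) + (\rho\text{-rescaling terms}),\ h}.
\end{equation*}
The bookkeeping of the factor $\rho$ here is a little delicate but purely formal; unwinding it and invoking \cref{lem:max_mon_dif_dep} — specifically \eqref{eq:deriv_is_resolv}, which says $J_B'(q^*,u^*;\cdot)$ is the resolvent of the parametrized maximally monotone operator $DB(y^*,u^* \mathbin{|} \xi^*)$ — this characterization of $\delta$ is equivalent to
\begin{equation*}
	0 \in R(\delta - k) + DB(y^*,u^*\mathbin{|}\xi^*)(\delta,h)
\end{equation*}
with the appropriate $k$ encoding $\delta + A'(y^*,u^*;\delta,h)$, i.e. to \eqref{eq:linearized}.

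The third step is uniqueness of the solution of \eqref{eq:linearized}, which then upgrades subsequential convergence to full convergence $\delta_t \to \delta$ and proves $S$ is directionally differentiable with $S'(u^*;h) = \delta$. Uniqueness follows from monotonicity: $DB(y^*,u^*\mathbin{|}\xi^*)(\cdot,h)$ is monotone by \cref{lem:max_mon_dif_dep}, and $\delta \mapsto A'(y^*,u^*;\delta,h)$ inherits strong monotonicity with constant $\mu_A$ from $A(\cdot,u^*)$ by passing to the limit in the difference quotients defining the directional derivative (the inequality $\dual{A(y_2,u^*)-A(y_1,u^*)}{y_2-y_1}\ge \mu_A\norm{y_2-y_1}^2$ survives the limit). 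Subtracting \eqref{eq:linearized} for two solutions and pairing with their difference gives $\mu_A\norm{\delta_1-\delta_2}^2 \le 0$.

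\textbf{Main obstacle.} The routine part is the contraction/fixed-point machinery, which essentially duplicates the proof of \cref{thm:solvability_vi}. The genuinely delicate point is Step 2: correctly tracking the scaling by $\rho$ when one passes between the resolvent $J_B$ (which is $J_{1\cdot B}$, appearing in \cref{asm:diff}\,\itemref{asm:diff:2}) and $J_{\rho B}$ (which appears naturally in the contraction), and making sure the limit of the difference quotient $w_t$ feeds into $J_B'$ (respectively $J_{\rho B}'$) with exactly the right argument so that the resulting fixed-point relation, after applying \cref{lem:max_mon_dif_dep}, reduces cleanly to \eqref{eq:linearized}. One clean way to avoid the $\rho$-bookkeeping entirely is simply to take $\rho = 1$ throughout (legitimate whenever $\mu_A/L_A^2 > 1/2$, and otherwise one can rescale $A$ and $B$ simultaneously), which is presumably what the statement's choice $q^* = y^* - R^{-1}A(y^*,u^*)$ in \cref{asm:diff}\,\itemref{asm:diff:2} is hinting at; then $w_t \to \delta - R^{-1}A'(y^*,u^*;\delta,h) =: k$ and one reads off $\delta = J_B'(q^*,u^*;k,h) = J_{DB(y^*,u^*\mathbin{|}\xi^*)}(k,h)$ directly, i.e. $0 \in R(\delta - k) + DB(y^*,u^*\mathbin{|}\xi^*)(\delta,h)$, and since $R(\delta - k) = A'(y^*,u^*;\delta,h)$ this is precisely \eqref{eq:linearized}.
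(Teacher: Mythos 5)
There is a genuine gap in Step~2, and it is precisely the obstruction the paper itself flags in \cref{rem:on_some_theorem}\,(ii)--(iii). Your Step~1 only gives boundedness of the difference quotients $\delta_t = (y_t - y^*)/t$ in $Y$; in an infinite-dimensional Hilbert space this yields only \emph{weakly} convergent subsequences, whereas your Step~2 explicitly works ``along any subsequence $t_n \searrow 0$ with $\delta_{t_n} \to \delta$'' and then uses this strong convergence twice: once to bound $\norm{A(y_{t_n},u_{t_n}) - A(y^* + t_n\delta, u_{t_n})} \le L_A t_n \norm{\delta_{t_n}-\delta}$ so that the directional derivative of $A$ (which is only assumed in the Gâteaux-type sense, along a fixed direction) can be invoked, and once to pass $w_{t_n} \to k$ into the $1$-Lipschitz resolvent so that $J_B'(q^*,u^*;k,h)$ appears. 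With only weak subsequential limits neither step goes through under \cref{asm:diff}: one would need Bouligand/Fréchet differentiability of $A$ in its first argument, which is not assumed, and even then the identification of the limit inside the resolvent is not justified. So the existence of the strongly convergent subsequences on which your whole identification argument rests is unproven, and hence so is the existence of $S'(u^*;h)$ (your uniqueness argument in Step~3 is fine, but it only upgrades convergence \emph{if} the subsequential strong limits exist).

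The paper avoids this by reversing the logic: it first solves the linearized equation \eqref{eq:linearized} abstractly (strong monotonicity and Lipschitz continuity of $A'(y^*,u^*;\cdot,h)$ plus maximal monotonicity of $DB(y^*,u^*\mathbin{|}\xi^*)(\cdot,h)$ from \cref{lem:max_mon_dif_dep}), sets $k := \delta - R^{-1}A'(y^*,u^*;\delta,h)$, and defines the auxiliary points $y_n := J_B(q^* + t_n k, u^* + t_n h)$, whose difference quotients converge to $\delta$ directly by \itemref{asm:diff:2} along the \emph{fixed} direction $(k,h)$ — no compactness needed. It then checks that $y_n$ solves the generalized equation at $u^* + t_n h$ up to a residual $t_n\zeta_n$ with $\zeta_n \to 0$ strongly, and the quantitative stability estimate of \cref{thm:solvability_vi} gives $\norm{y_n - S(u^*+t_nh)} \le C t_n \norm{\zeta_n} = \oo(t_n)$, which yields the claim. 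If you want to salvage your ``direct'' route you would have to either strengthen the differentiability hypotheses on $A$ or prove strong convergence of $\delta_t$ by some additional argument; as written, the proposal is incomplete. (Your closing remark about the $\rho$-bookkeeping and taking $\rho = 1$ is fine and consistent with \cref{cor:dir_dif_res}; that is not where the problem lies.)
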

\begin{proof}
	The proof is inspired by \cite{Levy1999}.
	Since $A'(y^*, u^*; \cdot, h) \colon Y \to Y\dualspace$
	is again strongly monotone and Lipschitz,
	and since the operator $DB(y^*, u^* \mathbin{|} \xi^* )(\cdot, h) \colon Y \mto Y\dualspace$
	is maximally monotone by \cref{lem:max_mon_dif_dep},
	it is clear that the linearized equation
	possesses a unique solution $\delta \in Y$
	for an arbitrary $h \in U$.
	It remains to check that $\delta$
	is the directional derivative of $S$.
	We set
	\begin{align*}
		q^* &:= y^* - R^{-1} A(y^*,u^*),
		&
		k &:= \delta - R^{-1} A'(y^*,u^*; \delta, h).
	\end{align*}
	This implies $y^* = J_B(q^*,u^*)$
	and
	$\delta = J_{B}'(q^*,u^*; k, h)$,
	cf.\ \eqref{eq:deriv_is_resolv}.

	Next, we take an arbitrary sequence $\seq{t_n}_{n \in \N} \subset (0,\infty)$ with $t_n \searrow 0$
	and define
	\begin{equation*}
		y_n := J_B( q^* + t_n k , u^* + t_n h),
		\qquad\text{i.e.,}\quad
		0 \in R(y_n - q^* - t_n k) + B(y_n, u^* + t_n h)
	\end{equation*}
	for $n$ large enough.
	Then,
	\itemref{asm:diff:2}
	implies
	\begin{equation*}
		\frac{y_n - y^*}{t_n}
		\to
		J_B'(q^*, u^*; k, h)
		=
		\delta.
	\end{equation*}
	Now, we define
	\begin{equation*}
		\zeta_n := \frac{R(y_n - q^* - t_n k) - A(y_n, u^* + t_n h)}{t_n}
		.
	\end{equation*}
	By using the definition of $q^*$
	we get
	\begin{align*}
		\zeta_n
		&= \frac{R(y_n - y^* - t_n k) + A(y^*,u^*) - A(y_n, u^* + t_n h)}{t_n}
		\\
		&=
		R\parens[\Big]{\frac{y_n - y^*}{t_n} - k}
		+
		\frac{A(y^*,u^*) - A(y^* + t_n \delta, u^* + t_n h)}{t_n}
		+
		\frac{A(y^* + t_n \delta, u^* + t_n h) - A(y_n, u^* + t_n h)}{t_n}
		.
	\end{align*}
	Due to \itemref{asm:standing:1},
	the last addend can be bounded by $L_A \norm{(y_n - y^*)/t_n - \delta} \to 0$.
	Thus,
	the directional differentiability of $A$ implies
	\(
		\zeta_n
		\to
		R (\delta - k )
		-
		A'(y^*, u^*, \delta, h)
		=
		0
	\)%
	.
	Next,
	we note that the definition of $\zeta_n$ yields
	\begin{equation*}
		0 \in t_n \zeta_n + A(y_n, u^* + t_n h) + B(y_n, u^* + t_n h).
	\end{equation*}
	Due to \cref{lem:local_solve},
	we can apply \cref{thm:solvability_vi}
	for $n$ large enough.
	This yields the existence
	of a unique solution $\tilde y_n \in B_{\varepsilon}(y^*)$
	of
	\begin{equation*}
		0 \in A(\tilde y_n, u^* + t_n h) + B(\tilde y_n, u^* + t_n h)
	\end{equation*}
	and this solution satisfies $\norm{y_n - \tilde y_n} \le C \norm{t_n \zeta_n}$.
	Note that $\tilde y_n = S(u^* + t_n h)$.
	Hence,
	\begin{equation*}
		\frac{S(u^* + t_n h ) - S(u^*)}{t_n}
		=
		\frac{\tilde y_n - y_n}{t_n} + \frac{y_n - y^*}{t_n}
		\to
		0
		+
		\delta.
	\end{equation*}
	This shows the claim.
\end{proof}

We mention that a similar result
has been given in
\cite[Theorem~1]{AdlyRockafellar2020}.
Therein, global assumptions on $A$ are used,
i.e.,
\itemref{asm:standing:1}
is required to hold for $\varepsilon = \infty$.
Moreover,
this contribution
uses the concept of proto-differentiability,
which is not utilized in our proof.
In particular, instead of \itemref{asm:diff:2},
\cite{AdlyRockafellar2020}
requires that $B$ is proto-differentiable
with a maximally monotone proto-derivative.

In our opinion,
it is often easier and more natural
to study the differentiability properties of $J_B$
instead of checking whether $B$ is proto-differentiable.
Indeed,
in the case that $B$ is the normal cone mapping
of a polyhedric set $C \subset Y$,
the directional differentiability
of $J_B = \Proj_C$
was already shown in \cite{Mignot1976,Haraux1977},
whereas the proto-differentiability
of $B$ was proved later in
\cite[Example~4.6]{Do1992},
see also
\cite{Levy1999}.
Moreover,
the former proofs are rather elementary,
whereas the latter proof
utilizes \cite[Theorem~3.9]{Do1992}
which is based on Attouch's theorem
linking the Mosco-convergence of convex functions
with the graphical convergence of their subdifferentials.

\begin{remark}
	\label{rem:on_some_theorem}
	We comment on some extensions and limitations of \cref{thm:diff_S}.
	\begin{enumerate}[label=(\roman*)]
		\item
			The strong monotonicity of $A$ can be replaced
			by requiring
			that the linearized equation \eqref{eq:linearized}
			possesses solutions for all $h \in U$
			and
			by assuming that
			the assertion of \cref{thm:solvability_vi}
			holds.
		\item
			It is not possible to adapt the proof to the situation of \cref{rem:weak_derivative}.
			Indeed, if we only assume weak directional differentiability of $J_B$,
			we only get $(y_n - y^*)/t_n \weakly \delta$
			and, thus,
			only $\zeta_n \weakly 0$
			(if $A$ is Bouligand differentiable).
			This, however, is not enough
			to obtain $\norm{y_n - \tilde y_n} = \oo(t_n)$
			in the last step of the proof.
		\item
			Another approach for proving \cref{thm:diff_S}
			is to directly consider $\tilde y_n := S(u^* + t_n h)$.
			Due to the Lipschitz continuity of $S$,
			one gets $(\tilde y_n - y^*) / t_n \weakly \delta$ along a subsequence
			for some $\delta \in Y$.
			The next step would be to perform a Taylor expansion of
			$A(\tilde y_n, u^* + t_n h) - A(y^*, u^*)$,
			but this needs stronger differentiability assumptions for $A$,
			e.g., Bouligand/Fréchet differentiability w.r.t.\ its first argument,
			cf.\ \cite[Remark~2.3(iii), Theorem~2.14]{ChristofWachsmuth2017:3}.
			In the above proof, this is avoided via the construction of $y_n$.
	\end{enumerate}
\end{remark}

Finally,
we mention that the directional differentiability of
$J_{\rho_0 B}$ for some $\rho_0 > 0$
implies the directional differentiability
for all $\rho > 0$.
\begin{corollary}
	\label{cor:dir_dif_res}
	Let $B \colon Y \times \UU \mto Y\dualspace$ as in 
	\itemref{asm:standing:2}
	be given and fix $(q^*, u^*) \in Y \times \UU$.
	With $y^* = J_B(q^*, u^*)$,
	the following are equivalent.
	\begin{enumerate}[label=(\roman*)]
		\item
			There exists $\rho_0 > 0$ such that
			the resolvent
			$J_{\rho_0 B}$ is directionally
			differentiable at $(y^* + \rho_0 (q^* - y^*), u^*)$.
		\item
			For all $\rho > 0$,
			$J_{\rho B}$ is directionally
			differentiable at $(y^* + \rho (q^* - y^*), u^*)$.
	\end{enumerate}
\end{corollary}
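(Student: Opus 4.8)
The plan is to argue directly from the classical resolvent identity rather than to re-run the proof of \cref{thm:diff_S} in this situation (the tempting reduction, via the rescaled equation $0 \in \tfrac{\rho_0}{\rho} R(y-q) + \rho_0 B(y,u)$ whose solution map in $(q,u)$ is $J_{\rho B}$, routes through \cref{lem:local_solve} and would therefore force us to verify continuity of $J_B(v,\cdot)$ at $u^*$ for \emph{all} $v \in Y$, which is not among our hypotheses). First I would introduce, for $\sigma>0$, the points $p_\sigma := y^* + \sigma(q^*-y^*)$; since $y^* = J_B(q^*,u^*)$ means $R(q^*-y^*) \in B(y^*,u^*)$ and $p_\sigma - y^* = \sigma(q^*-y^*)$, a one-line check gives $y^* = J_{\sigma B}(p_\sigma,u^*)$ for every $\sigma>0$ (so $p_1 = q^*$, and these are exactly the base points in the statement). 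Setting $\Lambda := \set{\sigma>0 \given J_{\sigma B}\text{ is directionally differentiable at }(p_\sigma,u^*)}$, assertion~(i) reads $\rho_0 \in \Lambda$ and assertion~(ii) reads $\Lambda = (0,\infty)$; the implication (ii)$\Rightarrow$(i) is trivial, so only $\rho_0 \in \Lambda \implies \Lambda = (0,\infty)$ remains.

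The core step is: if $\lambda \in \Lambda$ and $\rho > \lambda/2$, then $\rho \in \Lambda$. Here I would use the resolvent identity $J_{\mu C}(x) = J_{\lambda C}\bigl(\tfrac{\lambda}{\mu}x + (1-\tfrac{\lambda}{\mu})J_{\mu C}(x)\bigr)$ for a maximally monotone $C$ (applied with $C = B(\cdot,u)$ for each $u \in \UU$, and proved in one line from the definitions) to rewrite difference quotients of $J_{\rho B}$ at $(p_\rho,u^*)$ in terms of those of $J_{\lambda B}$ at $(p_\lambda,u^*)$. Fixing a direction $(k,h) \in Y\times U$ and a sequence $t_n \searrow 0$, and writing $y_n := J_{\rho B}(p_\rho + t_n k, u^* + t_n h)$ and $\delta_n := (y_n-y^*)/t_n$, the identity (with $\mu=\rho$), together with $p_\lambda = \tfrac{\lambda}{\rho}p_\rho + (1-\tfrac{\lambda}{\rho})y^*$ and $y^* = J_{\lambda B}(p_\lambda,u^*)$, turns the defining relation for $y_n$ into the fixed-point equation $\delta_n = G_n(\delta_n)$, where
\begin{equation*}
	G_n(\delta) := \frac{J_{\lambda B}\bigl(p_\lambda + t_n\bigl(\tfrac{\lambda}{\rho}k + (1-\tfrac{\lambda}{\rho})\delta\bigr),\, u^* + t_n h\bigr) - y^*}{t_n}.
\end{equation*}
Since $J_{\lambda B}(\cdot,u)$ is nonexpansive, each $G_n$ is a contraction with modulus $q := \abs{1-\lambda/\rho} < 1$ — this is the only place $\rho > \lambda/2$ is used — and so is the limiting map $G_0(\delta) := J_{\lambda B}'\bigl(p_\lambda,u^*;\tfrac{\lambda}{\rho}k + (1-\tfrac{\lambda}{\rho})\delta,h\bigr)$, whose base-point argument is nonexpansive (either by passing to the limit in the nonexpansiveness of $J_{\lambda B}(\cdot,\cdot)$, or via \cref{lem:max_mon_dif_dep}). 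Letting $\delta_*$ be the fixed point of $G_0$ and $w := \tfrac{\lambda}{\rho}k + (1-\tfrac{\lambda}{\rho})\delta_*$, the hypothesis $\lambda \in \Lambda$ — applied in the \emph{fixed} direction $(w,h)$ — gives $G_n(\delta_*) \to J_{\lambda B}'(p_\lambda,u^*;w,h) = G_0(\delta_*) = \delta_*$, and the standard estimate $\norm{\delta_n-\delta_*} = \norm{G_n(\delta_n)-G_0(\delta_*)} \le q\norm{\delta_n-\delta_*} + \norm{G_n(\delta_*)-\delta_*}$ forces $\delta_n \to \delta_*$. As $t_n \searrow 0$ was arbitrary and $\delta_*$ depends only on $(k,h)$, this is the directional differentiability of $J_{\rho B}$ at $(p_\rho,u^*)$.

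With the core step established, the conclusion follows abstractly: it shows $\Lambda \supseteq \bigcup_{\lambda\in\Lambda}(\lambda/2,\infty) = (\tfrac12\inf\Lambda,\infty)$, and since $\Lambda \subseteq (0,\infty)$ this forces $\inf\Lambda \le \tfrac12\inf\Lambda$, hence $\inf\Lambda = 0$ and $\Lambda = (0,\infty)$; equivalently, one iterates the core step starting from $\rho_0$ to reach every $\rho > \rho_0 2^{-n}$. The step I expect to need the most care is the bookkeeping that brings the argument of $J_{\lambda B}$ in the resolvent identity into the clean form $p_\lambda + t_n(\cdots)$, and — conceptually more important — the realization that a single contraction step only reaches $\rho > \lambda/2$, which is precisely why the bootstrapping over $\Lambda$ is needed. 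A secondary point worth emphasizing (cf.\ the discussion in \cref{rem:on_some_theorem}) is that only strong convergence of $\delta_n$ suffices, and it is available here exactly because $G_n(\delta_*)$ is a difference quotient of $J_{\lambda B}$ in a direction $(w,h)$ that does not depend on $n$, so the differentiability hypothesis applies to it verbatim.
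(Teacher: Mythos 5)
Your argument is correct, and it takes a genuinely different route from the paper. The paper's own proof is two lines: $J_{\rho B}$ is the solution map, in the parameter $(x,u) \in Y \times \UU$, of the generalized equation $0 \in \frac{\rho_0}{\rho} R(y-x) + \rho_0 B(y,u)$; its single-valued part is strongly monotone, Lipschitz and (being affine) directionally differentiable, its set-valued part has resolvent $J_{\rho_0 B}$, and since $y^* = J_{\rho B}(y^* + \rho(q^*-y^*), u^*)$, the point at which \itemref{asm:diff:2} asks for differentiability of that resolvent is exactly $y^* + \rho_0(q^*-y^*)$, i.e.\ assertion (i); so \cref{thm:diff_S} delivers (ii) at once. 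You instead argue from scratch: the resolvent identity converts the difference quotient of $J_{\rho B}$ at $(p_\rho,u^*)$ into a fixed point of the contraction $G_n$ with modulus $\abs{1-\lambda/\rho}$, the limit map $G_0$ built from $J_{\lambda B}'(p_\lambda,u^*;\cdot,h)$ is a contraction with the same modulus, a standard perturbation estimate gives convergence of the quotients to its fixed point, and since one step only covers $\rho > \lambda/2$ you bootstrap over $\Lambda$ — all steps check out, including the identity $\frac{\lambda}{\rho}p_\rho + (1-\frac{\lambda}{\rho})y^* = p_\lambda$ and the final infimum argument. The trade-off: the paper's proof is shorter and recycles the machinery of \cref{sec:gen_eq}, while yours is elementary, bypasses \cref{thm:diff_S} entirely, and exhibits $J_{\rho B}'$ as the solution of a contractive implicit equation in terms of $J_{\lambda B}'$. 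Your stated reason for avoiding the reduction is only partly compelling: invoking \cref{thm:diff_S} as a black box is legitimate, since its hypotheses for the rescaled data are exactly assertion (i), and the ``for all $v \in Y$'' continuity you worry about occurs only inside the proof of \cref{thm:diff_S} via \cref{lem:local_solve}; in this special case that step could anyway be short-circuited, because the rescaled equation is a resolvent equation, globally uniquely solvable and with a global Lipschitz stability estimate coming from monotonicity alone. Still, your instinct identifies something real: verifying \eqref{eq:est} directly with knowledge of $J_{\rho_0 B}$ only at the single point $y^* + \rho_0(q^*-y^*)$ forces the resolvent parameter $\sigma = 1$, admissible only for $\rho > \rho_0/2$ (with contraction constant $c = \abs{1-\rho_0/\rho}$), which is precisely the threshold in your core step; your bootstrap over $\Lambda$ then plays the role that Attouch's theorem (\cref{lem:conv_resolvents,cor:cts_B}) plays in the paper's framework.
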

\begin{proof}
	This follows from \cref{thm:diff_S},
	since $J_{\rho B} \colon (x,u) \mapsto y$
	is the solution map of
	\(
		0
		\in
		\frac{\rho_0}{\rho} R(y - x) + \rho_0 B(y, u)
	\)
	and
	$y^* = J_{\rho B}( y^* + \rho (q^* - y^*), u^*)$.
\end{proof}
Similarly,
an application of \cref{thm:diff_S} shows
that the directional differentiability of the resolvent $J_B$
is actually independent of the Riesz isomorphism $R$ of $Y$
and, thus, independent of the inner product in $Y$.

\subsection{Relation to proto-differentiability}
\label{subsec:proto}
The purpose of this section is to shed some light
on the relation of directional differentiability of the resolvent $J_B$
and
the proto-differentiability of $B$.

We first fix the notion of proto-differentiability
of the parametrized set-valued map $B$.
\begin{definition}
	\label{def:proto}
	Let $(y^*, u^*) \in Y \times U$ be given,
	such that \itemref{asm:standing:2}
	is satisfied.
	For some $\xi^* \in B(y^*, u^*)$
	and $(\delta, h) \in Y \times U$,
	we define
	\begin{equation}
		\label{eq:delta_t_B}
		\Delta_t B ( y^*, u^* \mathbin{|} \xi^*)(\delta, h)
		:=
		\frac{ B(y^* + t \delta, u^* + t h) - \xi^*}{t}
	\end{equation}
	for $t > 0$ small enough.
	We say that
	$B$ is proto-differentiable at $(y^*, u^*)$
	relative to $\xi^* \in B(y^*, u^*)$
	if the graph of
	$\Delta_t B ( y^*, u^* \mathbin{|} \xi^*)(\cdot, h) \colon Y \mto Y\dualspace$
	converges
	as $t \searrow 0$
	in the sense of Painlevé--Kuratowski,
	see \cite[Definition~2]{AdlyRockafellar2020},
	for all $h \in U$.
	In this case,
	we define its proto-derivative
	$DB(y^*, u^* \mathbin{|} \xi^*) \colon Y \times U \mto Y\dualspace$
	via
	\begin{equation*}
		\graph DB(y^*, u^* \mathbin{|} \xi^*)(\cdot, h)
		:=
		\graphlim_{t \searrow 0} \Delta_t B(y^*, u^* \mathbin{|} \xi^*)(\cdot, h)
	\end{equation*}
	for all $h \in U$.
\end{definition}
Although the operator
$\Delta_t B(y^*, u^* \mathbin{|} \xi^*)(\cdot, h)$
defined in \eqref{eq:delta_t_B}
is maximally monotone,
cf.\ \cite[Lemma~1]{AdlyRockafellar2020},
its graphical limit
$DB(y^*, u^* \mathbin{|} \xi^*)(\cdot, h)$
might
fail to be maximally monotone,
even if it exists,
see
\cite[Theorem~2]{Wachsmuth2021:1}.

\begin{lemma}
	\label{lem:proto_vs_resolvent}
	Let $(y^*, u^*) \in Y \times U$ be given,
	such that \itemref{asm:standing:2}
	is satisfied.
	Further, let $q^* \in Y$
	be given such that $y^* = J_B(q^*, u^*)$
	and we set
	$\xi^* := R(q^* - y^*) \in B(y^*, u^*)$.
	Then, the following are equivalent.
	\begin{enumerate}[label=(\roman*)]
			\item
				The mapping $J_B$
				is directionally differentiable at $(q^*, u^*)$.
			\item
				The mapping $B$ is proto-differentiable
				at $(y^*, u^*)$ relative to $\xi^*$
				and
				its proto-derivative
				$DB(y^*, u^* \mathbin{|} \xi^*)(\cdot, h)$
				is maximally monotone for all $h \in U$.
		\end{enumerate}
\end{lemma}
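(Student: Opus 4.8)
The plan is to prove the equivalence by relating the two notions through the identity that already appears in \cref{lem:max_mon_dif_dep}: the resolvent of $\Delta_t B$ is (essentially) a difference quotient of $J_B$. Concretely, fix $h \in U$ and, for $t > 0$ small, introduce the scaled resolvent $J_{\Delta_t B(y^*,u^*\mathbin{|}\xi^*)(\cdot,h)}$; a direct computation using the definition \eqref{eq:delta_t_B} shows that
\begin{equation*}
	J_{\Delta_t B(y^*,u^*\mathbin{|}\xi^*)(\cdot,h)}(k)
	=
	\frac{J_B(q^* + t k, u^* + t h) - y^*}{t}
	\qquad \forall k \in Y,
\end{equation*}
because $\delta$ solves $0 \in R(\delta-k)+\Delta_tB(y^*,u^*\mathbin{|}\xi^*)(\delta,h)$ iff $y^*+t\delta$ solves $0\in R\big((y^*+t\delta)-(q^*+tk)\big)+B(y^*+t\delta,u^*+th)$, using $\xi^* = R(q^*-y^*)$. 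Thus the difference quotient of $J_B$ in direction $(k,h)$ is exactly the resolvent of the difference quotient $\Delta_t B$. The assertion then becomes: the left-hand side converges (strongly, for all $k$) as $t\searrow 0$ if and only if $\graph\Delta_tB(\cdot,h)$ converges in Painlevé--Kuratowski sense to a maximally monotone limit.

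The implication (ii)$\Rightarrow$(i) is the direction covered by Attouch's theorem. Each $\Delta_t B(\cdot,h)$ is maximally monotone by \cite[Lemma~1]{AdlyRockafellar2020}, the graphical limit $DB(y^*,u^*\mathbin{|}\xi^*)(\cdot,h)$ is maximally monotone by hypothesis, so \cref{lem:conv_resolvents} (applied along an arbitrary sequence $t_n\searrow 0$ and with $\rho_0=1$) gives pointwise strong convergence of the resolvents, i.e.\ of the difference quotients; since the Painlevé--Kuratowski limit is independent of the sequence $t_n$, so is the limit of the resolvents, and this limit is by construction $J_{DB(y^*,u^*\mathbin{|}\xi^*)(\cdot,h)}(k)$. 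Hence $J_B'(q^*,u^*;k,h)$ exists for every $(k,h)$, which is (i).

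For (i)$\Rightarrow$(ii) I would argue in the reverse direction. Assuming directional differentiability of $J_B$, \cref{lem:max_mon_dif_dep} already produces a maximally monotone operator $DB(y^*,u^*\mathbin{|}\xi^*)(\cdot,h)$ — defined there via \eqref{eq:def_db} — whose resolvent is exactly $J_B'(q^*,u^*;\cdot,h)$. So it remains only to identify this operator with the Painlevé--Kuratowski graphical limit of $\Delta_tB(\cdot,h)$, i.e.\ to show that $\Delta_tB(\cdot,h)$ is proto-differentiable with precisely that proto-derivative. This again follows from \cref{lem:conv_resolvents}, but now read in the converse direction: two sequences of maximal monotone operators whose resolvents converge pointwise (strongly) must converge graphically (Painlevé--Kuratowski) — this equivalence between resolvent convergence and graphical convergence is the full content of Attouch's result, of which \cref{lem:conv_resolvents} states only one half. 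I would either invoke the complete statement from \cite{Attouch1984} or, to stay self-contained, note that from pointwise strong resolvent convergence one recovers graph convergence by a routine argument: given $(\delta,\eta)\in\graph DB(\cdot,h)$, set $k := \delta + R^{-1}\eta$, so $\delta = J_{DB(\cdot,h)}(k) = \lim_t J_{\Delta_tB(\cdot,h)}(k)$, and the corresponding pairs $\big(J_{\Delta_tB(\cdot,h)}(k),\, R(k - J_{\Delta_tB(\cdot,h)}(k))\big) \in \graph\Delta_tB(\cdot,h)$ converge to $(\delta,\eta)$, giving the liminf-inclusion; the limsup-inclusion follows from maximal monotonicity of the limit together with the monotonicity of each $\Delta_tB(\cdot,h)$. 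The main obstacle is precisely this last point — making sure that the full two-sided Attouch equivalence (not just the half stated in \cref{lem:conv_resolvents}) is available and correctly applied, and checking that strong (not merely weak) convergence of difference quotients is what one has and needs; once that is in place, both implications are short.
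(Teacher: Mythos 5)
Your key identity is correct and your overall route is sound, but it is genuinely different from the paper's proof. The paper never works with $\Delta_t B$ directly: it sets $A(y,u) := R(y-q^*)$, reparametrizes via $\tilde B_h(t,y) := B(y,u^*+th)$, and then channels everything through the machinery of Adly--Rockafellar (Lipschitz continuity of $J_{\tilde B_h}$ in $q$ upgrades directional to semi-differentiability, their Remark~5 and Lemma~2 give proto-differentiability of $\tilde B_h$ for ``$\Rightarrow$'', their Theorem~2 gives ``$\Leftarrow$''), with the maximal monotonicity of the proto-derivative obtained by matching resolvents with the operator from \cref{lem:max_mon_dif_dep}. You instead observe that $J_{\Delta_t B(y^*,u^*\mathbin{|}\xi^*)(\cdot,h)}(k)$ \emph{is} the difference quotient of $J_B$ at $(q^*,u^*)$ in direction $(k,h)$ (this computation is correct), so the lemma reduces to the Attouch-type equivalence between pointwise strong resolvent convergence and graphical convergence to a maximally monotone limit; for ``(i)$\Rightarrow$(ii)'' your self-contained argument (liminf inclusion by the explicit construction $k=\delta+R^{-1}\eta$, limsup inclusion from monotonicity of $\Delta_t B(\cdot,h)$ plus maximal monotonicity of the limit from \cref{lem:max_mon_dif_dep}) goes through. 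What your approach buys is transparency: it avoids the notions of semi-differentiability and the intermediate proto-differentiability of the resolvent entirely, at the cost of having to supply the graph/resolvent equivalence yourself.

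One caveat needs fixing. For ``(ii)$\Rightarrow$(i)'' you appeal to \cref{lem:conv_resolvents}, but that lemma only states the equivalence of resolvent convergence for one $\rho_0$ versus all $\rho$; it does not provide the implication ``graphical convergence of maximally monotone operators to a maximally monotone limit $\Rightarrow$ pointwise strong convergence of resolvents'' that you need here. You do acknowledge this and propose to invoke the full theorem from \cite{Attouch1984}, which is legitimate (and is also what underlies the Adly--Rockafellar results the paper cites); alternatively, a short direct argument in the spirit of your liminf construction works: with $\delta=J_{DB(\cdot,h)}(k)$, $\eta=R(k-\delta)$, pick $(\delta'_t,\eta'_t)\in\graph\Delta_t B(\cdot,h)$ converging to $(\delta,\eta)$ and use monotonicity of $\Delta_t B(\cdot,h)$ against $(J_{\Delta_t B(\cdot,h)}(k),R(k-J_{\Delta_t B(\cdot,h)}(k)))$ to get $\norm{J_{\Delta_t B(\cdot,h)}(k)-\delta'_t}\le\norm{R(k-\delta'_t)-\eta'_t}\to 0$. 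With that repair, both implications are complete.
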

\begin{proof}
	By defining
	$A(y,u) := R(y - q^*)$
	it can be easily checked that $(y^*, u^*)$ is a solution of \eqref{eq:p}
	and that \cref{asm:diff} is satisfied.

	In order to apply the results from \cite{AdlyRockafellar2020},
	we fix $h \in U$ and $t_0 > 0$ such that $u^* + t h \in \UU$ for all $t \in [0,t_0)$.
	We define
	\begin{equation*}
		\tilde B_h(t, y) := B(y, u^* + t h)
		\qquad\forall t \in [0,t_0), y \in Y.
	\end{equation*}
	Now, it can be checked that
	$\Delta_t B(y^*, u^* \mathrel{|} \xi^*)(\cdot, h)$
	coincides with
	$\Delta_t \tilde B_h(y^* \mathrel{|} \xi^*)(\cdot)$
	as defined in \cite[(9)]{AdlyRockafellar2020}.
	Thus,
	$B$ is proto-differentiable at $(y^*, u^*)$
	relative to $\xi^*$
	if and only if
	$\tilde B_h$ is proto-differentiable at $y^*$
	relative to $\xi^*$
	for all $h \in U$
	and we have the formula
	\begin{equation*}
		DB(y^*, u^* \mathbin{|} \xi^*)(\cdot, h)
		=
		D \tilde B_h( y^* \mathbin{|} \xi^*)(\cdot)
	\end{equation*}
	for the corresponding proto-derivatives.

	``$\Rightarrow$'':
	Due to $J_B( q, u^* + t h ) = J_{\tilde B_h}(t, q)$,
	we get that $J_{\tilde B_h}$
	is directionally differentiable.
	Further, $J_{\tilde B_h}(t, q) \in Y$
	is Lipschitz continuous w.r.t.\ $q \in Y$
	and, thus,
	we get the semi-differentiability
	(see \cite[Definition~1]{AdlyRockafellar2020})
	of
	$J_{\tilde B_h}$
	at
	$q^*$.
	Now, we can apply
	\cite[Remark~5]{AdlyRockafellar2020}
	to obtain that
	$J_{\tilde B_h}$ is proto-differentiable
	at $q^*$
	relative to $y^* = J_{\tilde B_h}(0,q^*)$.
	Now,
	\cite[Lemma~2]{AdlyRockafellar2020}
	yields that
	$\tilde B_h$
	is proto-differentiable
	at
	$y^*$
	relative to
	$\xi^*$.
	As explained above,
	this yields the desired proto-differentiability of $B$.
	Moreover,
	from these arguments, we can distill the formula
	\begin{equation*}
		J_B'(q^*, u^*; \cdot, h)
		=
		J_{\tilde B_h}'(q^*; \cdot)
		=
		DJ_{\tilde B_h}(q^* \mathbin{|} y^*)
		=
		(R + D \tilde B_h (y^* \mathbin{|} \xi^* ))^{-1}
		=
		(R + D B (y^*, u^* \mathbin{|} \xi^* )(\cdot, h))^{-1}
		.
	\end{equation*}
	Since a maximal monotone mapping
	is uniquely determined
	by its resolvent,
	this shows that
	$D B (y^*, u^* \mathbin{|} \xi^* )$
	coincides with the mapping
	defined in \cref{lem:max_mon_dif_dep}
	and, therefore, is maximally monotone.

	``$\Leftarrow$'':
	Using similar arguments as above,
	this follows from
	\cite[Theorem~2]{AdlyRockafellar2020}.
\end{proof}
In particular,
the normal cone mapping to a polyhedric set
is proto-differentiable
and the proto-derivative is given
as in \cref{prop:polyhedric}.
As already mentioned, this result is known from
\cite{Do1992,Levy1999}.

\section{Quasi-generalized equations}
\label{sec:qge}
In this section,
we treat the generalization
\begin{equation}
	\tag{\ref{eq:qp}}
	0 \in A(y,u) + B(y - \Phi(y,u), u)
\end{equation}
of \eqref{eq:p}.
Here, $\Phi \colon Y \times U \to Y$ is an additional mapping.
We will follow two approaches
to investigate the solution mapping of \eqref{eq:qp}.
In the first approach,
we reformulate \eqref{eq:qp} in the form \eqref{eq:p}
by introducing a new variable $z = y - \Phi(y,u)$.
This idea was successfully
used in \cite{Wachsmuth2019:2}
to prove the directional differentiability
of QVIs.
The second approach,
which was pioneered in \cite{AlphonseHintermuellerRautenberg2019},
uses a iteration approach,
i.e., it builds a sequence $\seq{y_{u,n}}_{n \in \N}$,
in which $y_{u,n}$ solves
\begin{equation*}
	0 \in A(y_{u,n},u) + B(y_{u,n} - \Phi(y_{u,n-1},u), u)
	.
\end{equation*}

We shall see that both approaches will use a similar set of assumptions.

\begin{assumption}
	\label{asm:basic_qge}
	We assume that $(y^*, u^*) \in Y \times U$ is a solution of \eqref{eq:qp}
	such that the operators $A$ and $B$ satisfy \cref{asm:standing}.
	In addition,
	$\Phi \colon Y \times U \to Y$ is continuous at $(y^*, u^*)$
	and
	$\Phi(\cdot, u)$ is locally a uniform contraction,
	i.e., there exists a Lipschitz constant $L_\Phi \in [0,1)$
	such that
	\begin{equation*}
		\norm{\Phi(y_2, u) - \Phi(y_1, u)} \le L_\Phi \norm{y_2 - y_1}
	\end{equation*}
	for all $y_1, y_2 \in B_{\varepsilon}(y^*)$ and for all $u \in \UU$.
\end{assumption}
\begin{assumption}
	\label{asm:qge_small}
	In addition to \cref{asm:basic_qge}, we require
	\begin{enumerate}[label=(\alph*)]
		\item
			\label{asm:qge_small:1}
			$L_\Phi < \gamma_A^{-1}$,
			or
		\item
			\label{asm:qge_small:2}
			$L_\Phi < 2 \sqrt{\gamma_A} / (1 + \gamma_A)$
			and
			there exists a function $g \colon U_\varepsilon(y^*) \times \UU \to \R$
			such that
			$A$ is the Fréchet derivative of $g$
			w.r.t.\ the first variable,
	\end{enumerate}
	where $\gamma_A = L_A / \mu_A \ge 1$
	is the local condition number of $A$.
\end{assumption}
\begin{assumption}
	\label{asm:Phi_diff}
	In addition to \cref{asm:basic_qge}, we assume
	that
	\begin{enumerate}[label=(\roman*)]
		\item
			\label{asm:Phi_diff:1}
			$A$ is directionally differentiable at $(y^*, u^*)$.
		\item
			\label{asm:Phi_diff:2}
			$J_B$ is directionally differentiable at $(q^* - \phi^*, u^*)$
			with
			$q^* = y^* - R^{-1} A(y^*, u^*)$
			and
			$\phi^* = \Phi(y^*, u^*)$.
		\item
			\label{asm:Phi_diff:3}
			$\Phi$ is directionally differentiable at $(y^*, u^*)$.
	\end{enumerate}
\end{assumption}

As a preparation,
we state a consequence of \cref{asm:qge_small}.
\begin{lemma}
	\label{lem:qge_small}
	Let \cref{asm:qge_small} be satisfied.
	Then, there exist constants $C > 0$
	and $\tilde c \in (0,1)$,
	such that for all
	$u \in \UU$ and
	$y_1, y_2, z_1, z_2 \in B_\varepsilon(y^*)$
	we have
	\begin{equation*}
		\dual{
			A(z_2, u) - A(z_1, u)
		}{
			z_2 - \Phi(y_2, u) - z_1 + \Phi(y_1, u)
		}
		\ge
		C
		\parens*{
			\norm{z_2 - z_1}^2
			-
			\tilde c^2 \norm{y_2 - y_1}^2
		}
		.
	\end{equation*}
\end{lemma}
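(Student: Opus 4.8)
The plan is to treat the two cases of \cref{asm:qge_small} separately, since case~\ref{asm:qge_small:1} is a pure monotonicity/Lipschitz estimate while case~\ref{asm:qge_small:2} needs the sharper inequality from \cref{thm:cvx_lipschitz_2}. In both cases the starting point is to split
\[
	\dual{A(z_2,u)-A(z_1,u)}{z_2-\Phi(y_2,u)-z_1+\Phi(y_1,u)}
	=
	\dual{A(z_2,u)-A(z_1,u)}{z_2-z_1}
	-
	\dual{A(z_2,u)-A(z_1,u)}{\Phi(y_2,u)-\Phi(y_1,u)}.
\]
For case~\ref{asm:qge_small:1}, I would bound the first term below using strong monotonicity by $\mu_A\norm{z_2-z_1}^2$, and the second term above using the Lipschitz bound on $A$ and the contraction bound on $\Phi$ by $L_A L_\Phi\norm{z_2-z_1}\norm{y_2-y_1}$. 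A Young/Cauchy--Schwarz inequality with a carefully chosen weight $\theta>0$ then gives
\[
	\ge \parens[\big]{\mu_A - \tfrac{\theta}{2} L_A L_\Phi}\norm{z_2-z_1}^2
	- \tfrac{1}{2\theta} L_A L_\Phi \norm{y_2-y_1}^2,
\]
and the requirement $L_\Phi < \gamma_A^{-1} = \mu_A/L_A$ is exactly what allows one to pick $\theta$ (e.g.\ $\theta = 1/L_\Phi$ or something slightly smaller, so that $L_A L_\Phi \theta < 2\mu_A$) making the coefficient of $\norm{z_2-z_1}^2$ a positive constant $C$ and forcing $\tilde c^2 = \tfrac{1}{2C\theta}L_A L_\Phi < 1$; one checks the latter directly from $L_\Phi < \mu_A/L_A$.

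For case~\ref{asm:qge_small:2}, $A(\cdot,u)=g'(\cdot,u)$ with $g(\cdot,u)$ convex (by strong monotonicity of $A$), so \cref{thm:cvx_lipschitz_2} applies to $f := g(\cdot,u)$ on $\YY = U_\varepsilon(y^*)$ (using that $A$ is $\mu_A$-strongly monotone and $L_A$-Lipschitz there, hence $g'$ is too). This yields, for the first term,
\[
	\dual{A(z_2,u)-A(z_1,u)}{z_2-z_1}
	\ge
	\frac{\mu_A L_A}{\mu_A+L_A}\norm{z_2-z_1}^2
	+
	\frac{1}{\mu_A+L_A}\norm{A(z_2,u)-A(z_1,u)}^2.
\]
Now I would keep the $\norm{A(z_2,u)-A(z_1,u)}^2$ term rather than discarding it: bounding the cross term by $\norm{A(z_2,u)-A(z_1,u)}\cdot L_\Phi\norm{y_2-y_1}$ and applying Young's inequality $ab \le \tfrac{1}{\mu_A+L_A}a^2 + \tfrac{\mu_A+L_A}{4}b^2$ with $a = \norm{A(z_2,u)-A(z_1,u)}$, $b = L_\Phi\norm{y_2-y_1}$, the $a^2$ terms cancel exactly and one is left with
\[
	\ge
	\frac{\mu_A L_A}{\mu_A+L_A}\norm{z_2-z_1}^2
	-
	\frac{(\mu_A+L_A)L_\Phi^2}{4}\norm{y_2-y_1}^2
	=
	\frac{\mu_A L_A}{\mu_A+L_A}\parens*{\norm{z_2-z_1}^2 - \frac{(\mu_A+L_A)^2 L_\Phi^2}{4\mu_A L_A}\norm{y_2-y_1}^2}.
\]
Setting $C := \mu_A L_A/(\mu_A+L_A)$ and $\tilde c^2 := (\mu_A+L_A)^2 L_\Phi^2/(4\mu_A L_A) = L_\Phi^2(1+\gamma_A)^2/(4\gamma_A)$, the hypothesis $L_\Phi < 2\sqrt{\gamma_A}/(1+\gamma_A)$ is precisely $\tilde c < 1$. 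Finally one takes the smaller of the two constants $C$ and the larger of the two $\tilde c$ (they need not be the same in the two cases, but only one case is assumed at a time, so this remark is moot).

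The main obstacle is not any single estimate — each is a one-line Young's inequality — but rather getting the bookkeeping of the weights exactly right so that the threshold on $L_\Phi$ comes out matching the stated constants $\gamma_A^{-1}$ and $2\sqrt{\gamma_A}/(1+\gamma_A)$; the key trick in case~\ref{asm:qge_small:2} is to \emph{not} throw away the $\norm{A(z_2,u)-A(z_1,u)}^2$ term from \cref{thm:cvx_lipschitz_2} but to use it to absorb the cross term exactly, which is what makes the improved bound $2\sqrt{\gamma_A}/(1+\gamma_A)$ (versus the cruder $\gamma_A^{-1}$) available in the gradient case. One should also note at the start that all points $z_1,z_2,y_1,y_2$ lie in $B_\varepsilon(y^*) \subset U_\varepsilon(y^*) = \YY$ up to shrinking, so that the hypotheses of \cref{thm:cvx_lipschitz_2} and \cref{asm:standing} are legitimately in force.
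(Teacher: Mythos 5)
Your proposal is correct and follows essentially the same route as the paper: case (a) by strong monotonicity, the Lipschitz bounds on $A$ and $\Phi$, and a weighted Young inequality (the paper's weight gives $C=\mu_A/2$, $\tilde c=\gamma_A L_\Phi$; your suggested $\theta=1/L_\Phi$ is not a valid choice in general, but $\theta=1$ or the paper's weight works and your reduction to $L_\Phi<\mu_A/L_A$ is exactly the point), and case (b) by \cref{thm:cvx_lipschitz_2} with the $\norm{A(z_2,u)-A(z_1,u)}^2$ term absorbing the cross term through the matched Young inequality, yielding the same constants $C=\mu_A L_A/(\mu_A+L_A)$ and $\tilde c=(1+\gamma_A)L_\Phi/(2\sqrt{\gamma_A})$ as the paper. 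The only detail you should state precisely is the passage from the open ball $U_\varepsilon(y^*)$, where \cref{thm:cvx_lipschitz_2} is applicable, to the closed ball $B_\varepsilon(y^*)$: the paper does this by continuity of $A(\cdot,u)$ in $z_1,z_2$ rather than by ``shrinking'' the ball.
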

\begin{proof}
	We denote the left-hand side of the inequality by $M \in \R$.

	In case that \itemref{asm:qge_small:1}
	holds,
	we have
	\begin{align*}
		M
		\ge
		\mu_A \norm{z_2 - z_1}^2
		-
		L_A L_\Phi \norm{z_2 - z_1}\norm{y_2 - y_1}
		\ge
		\frac{\mu_A}{2} \norm{z_2 - z_1}^2
		-
		\frac{L_A^2 L_\Phi^2}{ 2 \mu_A} \norm{y_2 - y_1}^2
		,
	\end{align*}
	i.e., we can use $C = \mu_A/2$
	and $\tilde c = L_A L_\Phi / \mu_A = \gamma_A L_\Phi < 1$.

	Under
	\itemref{asm:qge_small:2}, we
	first consider $z_1, z_2 \in U_\varepsilon(y^*)$.
	\Cref{thm:cvx_lipschitz_2} yields
	\begin{align*}
		M
		&\ge
		\frac{\mu_A L_A}{\mu_A + L_A} \norm{z_2 - z_1}^2
		+
		\frac{1}{\mu_A + L_A} \norm{A(z_2, u) - A(z_1, u)}^2
		-
		L_\Phi \norm{A(z_2, u) - A(z_1, u)} \norm{y_2 - y_1}
		\\
		&
		\ge
		\frac{\mu_A L_A}{\mu_A + L_A} \norm{z_2 - z_1}^2
		-
		\frac{(\mu_A + L_A) L_\Phi^2}{4} \norm{y_2 - y_1}^2
		.
	\end{align*}
	Here, we have used Young's inequality.
	Since everything is continuous w.r.t.\ $z_1, z_2 \in Y$,
	the inequality carries over to $z_1, z_2 \in B_\varepsilon(y^*)$.
	Now,
	we can choose
	$C = \mu_A L_A / (\mu_A + L_A)$
	and
	\(
		\tilde c
		=
		(\mu_A + L_A) L_\Phi / (2 \sqrt{\mu_A L_A} )
		=
		(1 + \gamma_A) L_\Phi / ( 2 \sqrt{\gamma_A})
		<
		1
	\).
\end{proof}
Interestingly,
this already shows that (locally) \eqref{eq:qp} has at most one solution.
\begin{lemma}
	\label{lem:uniqueness}
	Let \cref{asm:qge_small} be satisfied.
	Then, for each $u \in \UU$,
	\eqref{eq:qp} has at most one solution in $B_\varepsilon(y^*)$.
\end{lemma}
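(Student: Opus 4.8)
The plan is to suppose $y_1, y_2 \in B_\varepsilon(y^*)$ are two solutions of \eqref{eq:qp} for the same $u \in \UU$ and derive $y_1 = y_2$ from \cref{lem:qge_small}. Writing $z_i := y_i - \Phi(y_i, u)$, each solution satisfies $0 \in A(y_i, u) + B(z_i, u)$, and since $y_i \in B_\varepsilon(y^*)$ and $\Phi(\cdot, u)$ is an $L_\Phi$-contraction with $L_\Phi < 1$, the points $z_i$ lie in a bounded neighbourhood of $y^*$; after possibly shrinking $\varepsilon$ at the outset (or noting that $\|z_i - y^*\| \le (1+L_\Phi)\varepsilon + \|\Phi(y^*,u)-y^*\|$ is controlled), we may assume $z_1, z_2 \in B_\varepsilon(y^*)$ as well, which is what \cref{lem:qge_small} requires.

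The key step is to combine the monotonicity of $B$ with \cref{lem:qge_small}. From $-A(y_i,u) \in B(z_i,u)$ and monotonicity of $B(\cdot, u)$ we get
\begin{equation*}
	\dual{ -A(y_2,u) + A(y_1,u) }{ z_2 - z_1 } \ge 0,
\end{equation*}
i.e.\ $\dual{ A(z_2,u) - A(z_1,u) }{ z_2 - z_1 } \le \dual{ A(z_2,u) - A(z_1,u) - A(y_2,u) + A(y_1,u) }{ z_2 - z_1 }$ — but the cleaner route is to apply \cref{lem:qge_small} directly with $y_1, y_2$ as given and $z_1, z_2$ as just defined: the inner product on its left-hand side is exactly $\dual{ A(z_2,u) - A(z_1,u) }{ z_2 - z_1 } - \dual{ A(z_2,u) - A(z_1,u) }{ \Phi(y_2,u) - \Phi(y_1,u) }$, and the first of these is $\ge 0$ by monotonicity of $A$ while we need to relate the second to the $B$-monotonicity. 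Concretely, monotonicity of $B(\cdot,u)$ gives $\dual{ A(y_1,u) - A(y_2,u) }{ z_2 - z_1 } \ge 0$; adding a copy of $\dual{A(z_2,u)-A(z_1,u)}{z_2-z_1}\ge 0$ and rearranging produces the inequality
\begin{equation*}
	\dual{ A(z_2,u) - A(z_1,u) }{ z_2 - \Phi(y_2,u) - z_1 + \Phi(y_1,u) }
	\le
	\dual{ A(z_2,u) - A(z_1,u) - A(y_2,u) + A(y_1,u) }{ z_2 - z_1 },
\end{equation*}
and one then handles the right-hand side using Lipschitz continuity of $A$ together with $\|z_i - y_i\| = \|\Phi(y_i,u)\|$ bounds; alternatively, one observes that $z_2 - z_1 - (y_2 - y_1) = -(\Phi(y_2,u)-\Phi(y_1,u))$ so the two inner products differ by a controlled term. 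Either way, the upshot combined with \cref{lem:qge_small} is an estimate of the shape $C(\|z_2 - z_1\|^2 - \tilde c^2 \|y_2 - y_1\|^2) \le 0$, hence $\|z_2 - z_1\| \le \tilde c \|y_2 - y_1\|$.

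The final step is to close the loop via the triangle inequality: $\|y_2 - y_1\| \le \|z_2 - z_1\| + \|\Phi(y_2,u) - \Phi(y_1,u)\| \le \tilde c \|y_2 - y_1\| + L_\Phi \|y_2 - y_1\|$. Since $\tilde c < 1$ and $L_\Phi < 1$, this alone is not immediately a contradiction, so the actual argument must be slightly more careful — the right way is to go directly from $-A(y_i,u)\in B(z_i,u)$ and monotonicity of $B$ to $\dual{A(y_2,u)-A(y_1,u)}{z_2-z_1}\le 0$, then feed $z_i$ into \cref{lem:qge_small} applied with the roles arranged so that the \emph{same} points $y_i$ appear, obtaining $0 \ge \dual{A(z_2,u)-A(z_1,u)}{z_2-\Phi(y_2,u)-z_1+\Phi(y_1,u)} \ge C(\|z_2-z_1\|^2 - \tilde c^2\|y_2-y_1\|^2)$ only after noting $A(z_i,u)=A(y_i,u)$ is false in general — so in fact $z_i \ne y_i$ and the lemma is genuinely needed with its $z$-arguments being the $z_i$ and its $y$-arguments being the $y_i$. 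The main obstacle is precisely this bookkeeping: matching the four slots of \cref{lem:qge_small} to the two solutions and their shifted counterparts so that the $B$-monotonicity inequality $\dual{A(y_2,u)-A(y_1,u)}{z_1-z_2}\ge 0$ is exactly what makes the left-hand side of the lemma nonpositive; once that alignment is correct, $\|z_2-z_1\|\le\tilde c\|y_2-y_1\|$ follows, and then $\|y_2-y_1\|\le\|z_2-z_1\|+L_\Phi\|y_2-y_1\|\le(\tilde c+L_\Phi)\|y_2-y_1\|$ forces $y_1=y_2$ provided $\tilde c + L_\Phi < 1$ — which one checks from \cref{asm:qge_small} in each of the two cases \itemref{asm:qge_small:1} and \itemref{asm:qge_small:2} using the explicit formulas for $\tilde c$ derived in the proof of \cref{lem:qge_small}.
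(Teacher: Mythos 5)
There is a genuine gap, and it sits exactly at the point you flag as ``bookkeeping''. The correct application of \cref{lem:qge_small} is the one you explicitly reject: take \emph{both} slot families equal to the two solutions, i.e.\ $z_1 = y_1$, $z_2 = y_2$ in the notation of that lemma. Then its left-hand side is
\begin{equation*}
	\dual{A(y_2,u) - A(y_1,u)}{\,y_2 - \Phi(y_2,u) - y_1 + \Phi(y_1,u)\,},
\end{equation*}
which is precisely the quantity that the monotonicity of $B(\cdot,u)$ makes nonpositive (from $-A(y_i,u) \in B(y_i - \Phi(y_i,u), u)$), and its right-hand side is $C(1-\tilde c^2)\norm{y_2-y_1}^2$, so $y_1 = y_2$ follows in one line; this is the paper's proof. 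Your objection that ``$A(z_i,u) = A(y_i,u)$ is false in general'' is a self-inflicted confusion: once the lemma's $z$-arguments are chosen equal to the $y_i$ themselves, that identity is trivially true, because they are literally the same points. By instead insisting that the lemma's $z$-slots be filled with your shifted points $z_i = y_i - \Phi(y_i,u)$, you create two problems you never resolve: the $B$-monotonicity inequality $\dual{A(y_2,u)-A(y_1,u)}{z_2-z_1}\le 0$ involves $A$ evaluated at $y_i$, not at $z_i$, and the lemma's second slot becomes $z_2 - \Phi(y_2,u) - z_1 + \Phi(y_1,u) \ne z_2 - z_1$, so the claimed conclusion $\norm{z_2-z_1}\le \tilde c\norm{y_2-y_1}$ does not follow (you would in addition need $z_i \in B_\varepsilon(y^*)$, which is not part of the hypothesis and which you only wave at).

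Moreover, even if that intermediate estimate were available, your closing step fails: you need $\tilde c + L_\Phi < 1$, and this is \emph{not} implied by \cref{asm:qge_small}. Under \itemref{asm:qge_small:1} one has $\tilde c = \gamma_A L_\Phi$, so $\tilde c + L_\Phi = (1+\gamma_A)L_\Phi$, which for $\gamma_A = 1$ and $L_\Phi = 0.9 < \gamma_A^{-1}$ equals $1.8$; similarly under \itemref{asm:qge_small:2} with $\gamma_A = 1$, $\tilde c = L_\Phi$ and $\tilde c + L_\Phi = 2L_\Phi$ can be arbitrarily close to $2$. So the deferred ``one checks from \cref{asm:qge_small}'' cannot be carried out, and the argument as proposed does not close. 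The fix is simply to drop the auxiliary variables and apply \cref{lem:qge_small} in the diagonal case $z_i = y_i$, where the factor $(1-\tilde c^2) > 0$ alone yields uniqueness without any smallness of $\tilde c + L_\Phi$.
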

\begin{proof}
	Let $y_1, y_2 \in B_\varepsilon(y^*)$ be solutions of \eqref{eq:qp}.
	Due to $-A(y_i,u ) \in B(y_i - \Phi(y_i), u)$
	we get
	\begin{equation*}
		\dual{
			A(y_2, u) - A(y_1, u)
		}{
			y_1 - \Phi(y_1, u) - y_2 + \Phi(y_2, u)
		}
		\ge 0
	\end{equation*}
	and \cref{lem:qge_small} shows
	\(
		C
		\parens{
			1
			-
			\tilde c^2
		}
		\norm{y_2 - y_1}^2
		\le
		0
	\),
	i.e.,
	$y_1 = y_2$.
\end{proof}

\subsection{Via a reformulation as GE}
\label{subsec:reformulation_GE}
In this first approach,
we reformulate \eqref{eq:qp}
via the new variable
\begin{equation}
	\label{eq:tranfo}
	z = y - \Phi(y, u).
\end{equation}
Let us first check that this transformation is locally well defined.
\begin{lemma}
	\label{lem:tranfo}
	Let \cref{asm:basic_qge} be satisfied.
	There exist neighborhoods $\ZZ \subset Y$ and $\hat\UU \subset \UU$
	of $z^* := y^* - \Phi(y^*, u^*)$ and of $u^*$, respectively,
	such that
	the equation \eqref{eq:tranfo}
	has a unique solution $y \in B_\varepsilon(y^*)$
	for all $(z, u) \in \ZZ \times \hat\UU$.

	Moreover, the mapping
	\begin{equation*}
		(\id - \Phi(\cdot, u))^{-1} \colon \ZZ \to B_\varepsilon(y^*)
	\end{equation*}
	is Lipschitz with constant $(1 - L_\Phi)^{-1}$ for all $u \in \hat\UU$.
\end{lemma}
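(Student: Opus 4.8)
The plan is to solve \eqref{eq:tranfo} for fixed $(z,u)$ by the Banach fixed-point theorem: a point $y \in B_\varepsilon(y^*)$ satisfies $z = y - \Phi(y,u)$ if and only if it is a fixed point of the map $T_{z,u} \colon B_\varepsilon(y^*) \to Y$, $T_{z,u}(y) := z + \Phi(y,u)$. Since $B_\varepsilon(y^*)$ is a closed subset of the Hilbert space $Y$, it is complete, so it only remains to check that $T_{z,u}$ is a contraction of $B_\varepsilon(y^*)$ into itself once $(z,u)$ is restricted to suitable neighborhoods.

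First I would note that $T_{z,u}$ is a contraction with constant $L_\Phi < 1$: for $y_1, y_2 \in B_\varepsilon(y^*)$ and any $u \in \UU$, the uniform contraction property in \cref{asm:basic_qge} gives $\norm{T_{z,u}(y_2) - T_{z,u}(y_1)} = \norm{\Phi(y_2,u) - \Phi(y_1,u)} \le L_\Phi \norm{y_2 - y_1}$. Next, to obtain the self-mapping property I would use $y^* = z^* + \Phi(y^*,u^*)$ and estimate, for $y \in B_\varepsilon(y^*)$,
\[
\norm{T_{z,u}(y) - y^*}
\le \norm{z - z^*} + \norm{\Phi(y,u) - \Phi(y^*,u)} + \norm{\Phi(y^*,u) - \Phi(y^*,u^*)}
\le \norm{z - z^*} + L_\Phi \varepsilon + \norm{\Phi(y^*,u) - \Phi(y^*,u^*)}.
\]
By the continuity of $\Phi$ at $(y^*,u^*)$ there is a neighborhood $\hat\UU \subset \UU$ of $u^*$ with $\norm{\Phi(y^*,u) - \Phi(y^*,u^*)} \le (1-L_\Phi)\varepsilon/2$ for all $u \in \hat\UU$; setting $\ZZ := B_{(1-L_\Phi)\varepsilon/2}(z^*)$ then yields $\norm{T_{z,u}(y) - y^*} \le \varepsilon$ for all $(z,u) \in \ZZ \times \hat\UU$ and all $y \in B_\varepsilon(y^*)$. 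The Banach fixed-point theorem now delivers, for every such $(z,u)$, a unique $y \in B_\varepsilon(y^*)$ solving \eqref{eq:tranfo}, and this defines the map $(\id - \Phi(\cdot,u))^{-1} \colon \ZZ \to B_\varepsilon(y^*)$.

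For the Lipschitz bound I would take $z_1, z_2 \in \ZZ$, write $y_i := (\id - \Phi(\cdot,u))^{-1}(z_i) \in B_\varepsilon(y^*)$, subtract the identities $y_i = z_i + \Phi(y_i,u)$, and apply the contraction property once more to get $\norm{y_2 - y_1} \le \norm{z_2 - z_1} + L_\Phi \norm{y_2 - y_1}$, hence $\norm{y_2 - y_1} \le (1-L_\Phi)^{-1}\norm{z_2 - z_1}$. I do not expect a genuine obstacle here; the only point requiring a little care is the quantitative use of the continuity of $\Phi$, together with the smallness built into the radius of $\ZZ$, to guarantee that $T_{z,u}$ maps $B_\varepsilon(y^*)$ into itself.
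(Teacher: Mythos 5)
Your argument is correct, and it differs from the paper's in how the existence part is obtained. The paper does not run a fresh fixed-point argument: it rewrites \eqref{eq:tranfo} as a generalized equation $0 \in \AA(y,(z,u)) - \BB(y,(z,u))$ with $\AA(y,(z,u)) := R\bigl(y - z - \Phi(y,u)\bigr)$ and $\BB \equiv \{0\}$, notes that $\AA(\cdot,(z,u))$ is uniformly strongly monotone (constant $1-L_\Phi$) and Lipschitz (constant $1+L_\Phi$) on $B_\varepsilon(y^*)$, and then simply invokes \cref{thm:solvability_vi} (with the trivial resolvent $J_{\rho\BB} = \id$ and the continuity of $\Phi$ at $(y^*,u^*)$ supplying the smallness condition \eqref{eq:est}); the Lipschitz estimate $\norm{y_2-y_1} \le \norm{z_2-z_1} + L_\Phi\norm{y_2-y_1}$ is then exactly yours. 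You instead apply Banach's theorem directly to the natural Picard map $T_{z,u}(y) = z + \Phi(y,u)$, which is a contraction with the clean constant $L_\Phi$, and you verify the self-mapping property quantitatively from the continuity of $\Phi$ at $(y^*,u^*)$ — a more elementary and self-contained route that also makes the choice of the neighborhoods $\ZZ$ and $\hat\UU$ explicit. What the paper's formulation buys is reuse of its machinery: the same pair $(\AA,\BB)$ reappears in \cref{lem:diff_psi}, where \cref{thm:diff_S} applied to this auxiliary generalized equation yields the directional differentiability of $\Psi(z,u) = (\id-\Phi(\cdot,u))^{-1}(z)$, so framing the transformation as an instance of \eqref{eq:p} from the start keeps the later differentiability step free of extra work. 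Your proof of the lemma itself is complete as stated.
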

\begin{proof}
	We define the mappings
	$\AA \colon Y \times (Y \times U) \to Y\dualspace$
	and
	$\BB \colon Y \times (Y \times U) \mto Y\dualspace$
	via
	\begin{equation*}
		\AA(y, (z,u)) := R\parens[\big]{ y - z - \Phi(y, u) },
		\qquad
		\BB(y, (z,u)) := \set{0}.
	\end{equation*}
	Now, our transformation \eqref{eq:tranfo}
	is equivalent to the generalized equation
	\begin{equation*}
		0 \in \AA(y, (z,u)) - \BB(y, (z,u))
	\end{equation*}
	and the first part of the assertion follows from \cref{thm:solvability_vi}.

	To estimate the Lipschitz constant, we take $z_1, z_2 \in \ZZ$
	and set $y_i = (\id - \Phi(\cdot, u))^{-1}(z_i)$ for $i = 1,2$.
	Then,
	\begin{equation*}
		\norm{y_2 - y_1}
		=
		\norm{z_2 + \Phi(y_2, u) - z_1 - \Phi(y_1, u)}
		\le
		\norm{z_2 - z_1} + L_\Phi \norm{y_2 - y_1}
	\end{equation*}
	and this shows the claim.
\end{proof}

Using the result of \cref{lem:tranfo},
we transform \eqref{eq:qp} into
\begin{equation}
	\label{eq:qp_z}
	0
	\in
	\tilde A(z, u)
	+
	B(z, u),
\end{equation}
where
$\tilde A \colon \ZZ \times \hat \UU \to Y$
is defined via
\begin{equation*}
	\tilde A(z, u)
	:=
	A(
		( \id - \Phi(\cdot, u))^{-1}(z)
		,
		u
	)
	.
\end{equation*}
By inserting the definitions,
we verify that \eqref{eq:qp} and \eqref{eq:qp_z}
are locally equivalent.

\begin{lemma}
	\label{lem:equiv_traf}
	Let \cref{asm:basic_qge} be satisfied.
	\begin{enumerate}[label=(\alph*)]
			\item
				If $(y,u) \in B_\varepsilon(y^*) \times \hat \UU$
				is a solution of \eqref{eq:qp}
				with
				$z := y - \Phi(y,u) \in \ZZ$,
				then
				$z$ is a solution of \eqref{eq:qp_z}.
			\item
				If $(z, u) \in \ZZ \times \hat \UU$
				is a solution of \eqref{eq:qp_z},
				then $y := (\id - \Phi(\cdot, u))^{-1}(z)$
				is a solution of \eqref{eq:qp}.
		\end{enumerate}
\end{lemma}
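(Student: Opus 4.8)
The plan is to prove both implications by direct substitution, unwinding the definitions of $\tilde A$, the transformation \eqref{eq:tranfo}, and the inverse map $(\id - \Phi(\cdot,u))^{-1}$ provided by \cref{lem:tranfo}. The key observation is that for $(z,u) \in \ZZ \times \hat\UU$ and $y = (\id - \Phi(\cdot,u))^{-1}(z) \in B_\varepsilon(y^*)$ we have, by the very definition of the inverse, the identity $z = y - \Phi(y,u)$, and hence $\tilde A(z,u) = A(y,u)$ by the definition of $\tilde A$. With this identity in hand both directions become essentially tautological.

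For part (a): suppose $(y,u) \in B_\varepsilon(y^*) \times \hat\UU$ solves \eqref{eq:qp}, i.e.\ $0 \in A(y,u) + B(y - \Phi(y,u), u)$, and set $z := y - \Phi(y,u)$, which we assume lies in $\ZZ$. By \cref{lem:tranfo} the point $y$ is the unique solution in $B_\varepsilon(y^*)$ of \eqref{eq:tranfo} for this $z$, so $y = (\id - \Phi(\cdot,u))^{-1}(z)$, and therefore $\tilde A(z,u) = A(y,u)$. Substituting into \eqref{eq:qp} and using $z = y - \Phi(y,u)$ gives $0 \in \tilde A(z,u) + B(z,u)$, which is \eqref{eq:qp_z}.

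For part (b): suppose $(z,u) \in \ZZ \times \hat\UU$ solves \eqref{eq:qp_z}, and set $y := (\id - \Phi(\cdot,u))^{-1}(z) \in B_\varepsilon(y^*)$. Then $z = y - \Phi(y,u)$ by \cref{lem:tranfo}, and $\tilde A(z,u) = A(y,u)$ by definition. Plugging these into $0 \in \tilde A(z,u) + B(z,u)$ yields $0 \in A(y,u) + B(y - \Phi(y,u), u)$, i.e.\ $y$ solves \eqref{eq:qp}.

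There is no real obstacle here; the only point requiring a modicum of care is bookkeeping of the neighborhoods — ensuring that in part (a) the hypothesis $z \in \ZZ$ is exactly what is needed to invoke the inverse map, and in part (b) that $(\id-\Phi(\cdot,u))^{-1}$ is defined on all of $\ZZ$ for $u \in \hat\UU$, which is precisely the content of \cref{lem:tranfo}. The lemma is deliberately an "insert the definitions" statement, as the text preceding it already announces.
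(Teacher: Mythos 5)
Your proof is correct and is exactly what the paper intends: the paper gives no separate proof, merely noting before the lemma that the equivalence is verified ``by inserting the definitions,'' and your substitution argument, including the use of the uniqueness part of \cref{lem:tranfo} to identify $y$ with $(\id-\Phi(\cdot,u))^{-1}(z)$ in part (a), fills this in faithfully.
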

Note that $z \in \ZZ$ in (a)
is satisfied if $(y,u)$ is sufficiently close to $(y^*, u^*)$.

By using the ideas of \cite[Lemmas~3.3, 3.5]{Wachsmuth2019:2}, we check that the analysis from \cref{sec:gen_eq}
applies to \eqref{eq:qp_z}.
In what follows,
we use
$z^* := y^* - \Phi(y^*, u^*)$.

\begin{lemma}
	\label{lem:asm_tilde_A}
	Let \cref{asm:qge_small} be satisfied.
	Then, the operator $\tilde A \colon \ZZ \times \hat \UU \to Y$
	satisfies \cref{asm:standing}
	at
	$(z^*, u^*)$.
\end{lemma}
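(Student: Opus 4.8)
The plan is to verify the two items of \cref{asm:standing} for the pair $(\tilde A, B)$ at the point $(z^*, u^*)$. Item \itemref{asm:standing:2} is immediate, since the set-valued part of \eqref{eq:qp_z} is still $B$, which is maximally monotone in its first argument by \cref{asm:basic_qge}. Hence all the work is in item \itemref{asm:standing:1}: I must produce constants $\tilde\varepsilon, \mu_{\tilde A}, L_{\tilde A} > 0$ and a neighborhood of $u^*$ on which $\tilde A(\cdot, u)$ is $\mu_{\tilde A}$-strongly monotone and $L_{\tilde A}$-Lipschitz on $B_{\tilde\varepsilon}(z^*)$.

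First I would fix $\tilde\varepsilon > 0$ so small that $B_{\tilde\varepsilon}(z^*) \subseteq \ZZ$, which is possible because $\ZZ$ is a neighborhood of $z^*$, and keep $\hat\UU$ as the parameter neighborhood. For $z_1, z_2 \in B_{\tilde\varepsilon}(z^*)$ and $u \in \hat\UU$, set $y_i := (\id - \Phi(\cdot, u))^{-1}(z_i)$. By \cref{lem:tranfo}, these points lie in $B_\varepsilon(y^*)$, satisfy $z_i = y_i - \Phi(y_i, u)$ and $\tilde A(z_i,u) = A(y_i,u)$, and obey $\norm{y_2 - y_1} \le (1 - L_\Phi)^{-1} \norm{z_2 - z_1}$. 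Conversely, the triangle inequality together with the contraction property of $\Phi(\cdot,u)$ gives $\norm{z_2 - z_1} \le (1 + L_\Phi)\norm{y_2 - y_1}$; this reverse estimate is what lets me translate $y$-norm bounds back into $z$-norm bounds.

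The Lipschitz estimate is then a short chain: $\norm{\tilde A(z_2,u) - \tilde A(z_1,u)} = \norm{A(y_2,u) - A(y_1,u)} \le L_A \norm{y_2 - y_1} \le L_A (1 - L_\Phi)^{-1}\norm{z_2 - z_1}$, so $L_{\tilde A} := L_A/(1-L_\Phi)$ works. For strong monotonicity I would rewrite $\dual{\tilde A(z_2,u) - \tilde A(z_1,u)}{z_2 - z_1} = \dual{A(y_2,u) - A(y_1,u)}{y_2 - \Phi(y_2,u) - y_1 + \Phi(y_1,u)}$ and apply \cref{lem:qge_small} with the roles of \emph{both} its $y$- and $z$-arguments played by $y_1, y_2 \in B_\varepsilon(y^*)$. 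This yields the lower bound $C(\norm{y_2 - y_1}^2 - \tilde c^2 \norm{y_2 - y_1}^2) = C(1 - \tilde c^2)\norm{y_2 - y_1}^2$. Using $\norm{y_2 - y_1} \ge (1+L_\Phi)^{-1}\norm{z_2 - z_1}$ and $\tilde c \in (0,1)$ from \cref{lem:qge_small}, I obtain $\dual{\tilde A(z_2,u) - \tilde A(z_1,u)}{z_2 - z_1} \ge \frac{C(1-\tilde c^2)}{(1+L_\Phi)^2}\norm{z_2 - z_1}^2$, so $\mu_{\tilde A} := C(1-\tilde c^2)/(1+L_\Phi)^2 > 0$ closes the argument. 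There is no analytic obstacle here; the only point requiring care is the double substitution into \cref{lem:qge_small} and the consistent bookkeeping of the two equivalent norms $\norm{z_2-z_1}$ and $\norm{y_2-y_1}$ induced by the change of variables \eqref{eq:tranfo}.
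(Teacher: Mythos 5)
Your argument is correct and follows essentially the same route as the paper: the Lipschitz bound comes from \cref{lem:tranfo}, and the strong monotonicity comes from applying \cref{lem:qge_small} with both point pairs taken equal to $y_1,y_2$ together with the elementary estimate $\norm{z_2-z_1}\le(1+L_\Phi)\norm{y_2-y_1}$. Your constant $C(1-\tilde c^2)$ is in fact slightly sharper than the $C(1-\tilde c)$ stated in the paper's proof, but this makes no difference to the conclusion.
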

\begin{proof}
	Let $\eta > 0$ be given such that $B_\eta(z^*) \subset \ZZ$.
	We have to show
	the existence of $\mu_{\tilde A}, L_{\tilde A} > 0$
	such that
	\begin{align*}
		\dual{ \tilde A(z_2, u) - \tilde A(z_1, u) }{ z_2 - z_1} &\ge \mu_{\tilde A} \norm{z_2 - z_1}^2
		\\
		\norm{ \tilde A(z_2, u) - \tilde A(z_1, u)}
		&
		\le L_{\tilde A} \norm{z_2 - z_1}
	\end{align*}
	holds for all $z_1, z_2 \in B_{\eta}(z^*)$
	and for all $u \in \hat\UU$.
	The Lipschitz property follows directly from \cref{lem:tranfo}
	and it remains to prove
	the strong monotonicity.

	Let $z_1, z_2 \in B_\eta(z^*)$ and $u \in \hat\UU$
	be arbitrary
	and set
	$y_i := (\id - \Phi(\cdot, u))^{-1}(z_i) \in B_\varepsilon(y^*)$ for $i = 1,2$.
	Then,
	\cref{lem:qge_small} implies
	\begin{align*}
		\dual{ \tilde A(z_2, u) - \tilde A(z_1, u) }{ z_2 - z_1}
		&=
		\dual{ A(y_2, u) - A(y_1, u) }{ y_2 - y_1 - \Phi(y_2, u) + \Phi(y_1, u) }
		\\&\ge
		C (1 - \tilde c) \norm{y_2 - y_1}^2
		.
	\end{align*}
	In combination with
	\begin{equation*}
		\norm{z_2 - z_1}
		=
		\norm{y_2 - y_1 - \Phi(y_2, u) + \Phi(y_1, u)}
		\le
		(1 + L_\Phi) \norm{y_2 - y_1},
	\end{equation*}
	this
	shows the uniform strong monotonicity of $\tilde A$.
\end{proof}

In order to apply \cref{thm:diff_S},
we have to check that $\tilde A$
is directionally differentiable.
To this end,
we verify
that
$(z,u) \mapsto (\id - \Phi(\cdot, u))^{-1}(z)$
is directionally differentiable.

\begin{lemma}
	\label{lem:diff_psi}
	Let \cref{asm:Phi_diff} be satisfied.
	Then,
	$\Psi \colon \ZZ \times \hat\UU \to B_\varepsilon(y^*)$,
	$\Psi(z,u) := (\id - \Phi(\cdot, u))^{-1}(z)$,
	is directionally differentiable
	at $(z^*, u^*)$
	and
	the directional derivative
	$\delta = \Psi'(z^*, u^*; k, h)$
	in a direction $(k,h) \in Y \times U$
	is the unique solution of
	\begin{equation*}
		\delta
		-
		\Phi'( y^*, u^*; \delta, h)
		=
		k
		.
	\end{equation*}
\end{lemma}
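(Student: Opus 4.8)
The plan is to first show that the linearized equation $\delta - \Phi'(y^*,u^*;\delta,h) = k$ is uniquely solvable for every direction $(k,h)\in Y\times U$, and then to verify directly that the difference quotients of $\Psi$ converge to its solution. For the well-posedness, I would pass to the limit $t\searrow 0$ in the Lipschitz estimate from \cref{asm:basic_qge}, which gives $\norm{\Phi(y^*+t\delta_2,u^*+th) - \Phi(y^*+t\delta_1,u^*+th)}\le L_\Phi\,t\,\norm{\delta_2-\delta_1}$ for $t$ small, to conclude that $\Phi'(y^*,u^*;\cdot,h)\colon Y\to Y$ is $L_\Phi$-Lipschitz for each fixed $h$. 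Since $L_\Phi<1$, the map $\delta\mapsto k+\Phi'(y^*,u^*;\delta,h)$ is then a contraction on $Y$, and the Banach fixed-point theorem yields the desired unique $\delta$ (uniqueness by itself also follows by subtracting two solutions).

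For the convergence, fix $(k,h)$ and let $\delta$ be the solution of the linearized equation; note that $\Psi(z^*,u^*)=y^*$ since $z^*=y^*-\Phi(y^*,u^*)$. By \cref{lem:tranfo}, for $t>0$ small we have $(z^*+tk,u^*+th)\in\ZZ\times\hat\UU$, so $y_t:=\Psi(z^*+tk,u^*+th)\in B_\varepsilon(y^*)$ is well defined and satisfies $y_t-\Phi(y_t,u^*+th)=z^*+tk$. Subtracting the identities $y^*-\Phi(y^*,u^*)=z^*$ and $t\delta-tk=t\,\Phi'(y^*,u^*;\delta,h)$ and inserting $\pm\,\Phi(y^*+t\delta,u^*+th)$, I would obtain
\begin{equation*}
	(y_t-y^*)-t\delta
	= \bigl[\Phi(y_t,u^*+th)-\Phi(y^*+t\delta,u^*+th)\bigr]
	+ \bigl[\Phi(y^*+t\delta,u^*+th)-\Phi(y^*,u^*)-t\,\Phi'(y^*,u^*;\delta,h)\bigr].
\end{equation*}
For $t$ small both $y_t$ and $y^*+t\delta$ lie in $B_\varepsilon(y^*)$, so the first bracket is bounded in norm by $L_\Phi\norm{(y_t-y^*)-t\delta}$, while the second bracket equals $t\,o(1)$ as $t\searrow 0$ by the directional differentiability of $\Phi$ from \itemref{asm:Phi_diff:3}. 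Absorbing the first term into the left-hand side and dividing by $t$ yields $\norm{(y_t-y^*)/t-\delta}\le (1-L_\Phi)^{-1}o(1)\to 0$, which is exactly directional differentiability of $\Psi$ at $(z^*,u^*)$ with $\Psi'(z^*,u^*;k,h)=\delta$.

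I expect the only delicate point to be the twofold role of the contraction bound $L_\Phi<1$: once to make the linearized equation well posed, and once more to absorb the remainder term caused by having the genuinely nonlinear argument $y_t$ (rather than a fixed point $y^*+t\delta$) inside $\Phi$ back into the left-hand side. Apart from that one only has to keep $t$ small enough that $y_t$, $y^*+t\delta$ and $u^*+th$ stay in the regions where \cref{asm:basic_qge} and \cref{lem:tranfo} are available; the remaining manipulations are routine.
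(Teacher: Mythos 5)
Your argument is correct, but it takes a genuinely different route from the paper. The paper proves this lemma in two lines by recycling the machinery of \cref{sec:gen_eq}: as in the proof of \cref{lem:tranfo}, it writes $\Psi$ as the solution map of the generalized equation $0 \in \AA(y,(z,u)) - \BB(y,(z,u))$ with $\AA(y,(z,u)) = R(y - z - \Phi(y,u))$ and the trivial set-valued part $\BB \equiv \set{0}$, and then invokes \cref{thm:diff_S} (strong monotonicity of $\AA(\cdot,(z,u))$ with constant $1-L_\Phi$, directional differentiability of $\AA$ from \itemref{asm:Phi_diff:3}, and the identity as resolvent of $\BB$). You instead prove the implicit-function-type statement by hand: you first show that $\Phi'(y^*,u^*;\cdot,h)$ inherits the Lipschitz constant $L_\Phi$ from the difference quotients, so that $\delta \mapsto k + \Phi'(y^*,u^*;\delta,h)$ is a contraction and the linearized equation is well posed; then your algebraic identity for $(y_t-y^*)-t\delta$ is correct (it follows from subtracting the three defining equations and inserting $\pm\Phi(y^*+t\delta,u^*+th)$), the first bracket is absorbed using $L_\Phi<1$ once $y_t, y^*+t\delta \in B_\varepsilon(y^*)$ for small $t$ (guaranteed by \cref{lem:tranfo}), and the second bracket is $\oo(t)$ by plain directional differentiability of $\Phi$ along the fixed direction $(\delta,h)$. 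What each approach buys: the paper's proof is essentially free given \cref{thm:diff_S}, emphasizing that the transformation fits the GE framework; yours is elementary and self-contained, avoids the (here trivially satisfied) resolvent hypothesis of \cref{thm:diff_S} and the solvability theory behind it, and makes explicit that only $L_\Phi<1$ and directional differentiability of $\Phi$ are used, with the quantitative bound $\norm{(y_t-y^*)/t-\delta} \le (1-L_\Phi)^{-1}\oo(1)$.
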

\begin{proof}
	We define
	$\AA$ and $\BB$ as in the proof of \cref{lem:tranfo}.
	Then,
	$\Psi$ is the solution mapping
	$(y,u) \mapsto z$
	of
	\begin{equation*}
		0 \in \AA(y, (z,u)) - \BB(y, (z,u))
	\end{equation*}
	and
	the assertion follows from \cref{thm:diff_S}.
\end{proof}

\begin{corollary}
	\label{cor:tilde_a_direc}
	Let \cref{asm:Phi_diff} be satisfied.
	Then, the operator
	$\tilde A$ is directionally differentiable
	at $(z^*, u^*)$
	and we have
	\begin{equation*}
		\tilde A'(z^*, u^*; k, h)
		=
		A'(y^*, u^*; \Psi'(z^*, u^*; k, h), h)
		.
	\end{equation*}
\end{corollary}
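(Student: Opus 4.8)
The plan is to treat $\tilde A$ as the composition $\tilde A(z,u) = A(\Psi(z,u),u)$ and to invoke a chain rule for directional derivatives, exploiting that the outer map $A$ is \emph{Lipschitz} in its first argument, uniformly in $u$; this is precisely what makes directional differentiability survive the composition. I would start from the facts that, by \cref{lem:diff_psi} (applicable since \cref{asm:Phi_diff} is assumed), the inner map $\Psi$ is directionally differentiable at $(z^*, u^*)$, and that $\Psi(z^*, u^*) = y^*$ by the definition $z^* = y^* - \Phi(y^*, u^*)$.

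Fix a direction $(k,h) \in Y \times U$ and set $\delta := \Psi'(z^*, u^*; k, h)$. For $t > 0$ small enough we have $z^* + t k \in \ZZ$ and $u^* + t h \in \hat\UU$, so $y_t := \Psi(z^* + t k, u^* + t h)$ is well defined, lies in $B_\varepsilon(y^*)$, and satisfies $(y_t - y^*)/t \to \delta$ as $t \searrow 0$; moreover $y^* + t \delta \in B_\varepsilon(y^*)$ for $t$ small. I would then split the difference quotient of $\tilde A$ as
\begin{align*}
	\frac{\tilde A(z^* + t k, u^* + t h) - \tilde A(z^*, u^*)}{t}
	&=
	\frac{A(y_t, u^* + t h) - A(y^* + t\delta, u^* + t h)}{t}
	\\
	&\quad +
	\frac{A(y^* + t\delta, u^* + t h) - A(y^*, u^*)}{t}.
\end{align*}
By \itemref{asm:standing:1}, the first term is bounded in norm by $L_A \norm{(y_t - y^*)/t - \delta}$, which tends to $0$, while by \itemref{asm:Phi_diff:1} the second term tends to $A'(y^*, u^*; \delta, h)$. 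Since both limits hold along an arbitrary null sequence $t_n \searrow 0$, the difference quotient converges to $A'(y^*, u^*; \delta, h) = A'(y^*, u^*; \Psi'(z^*, u^*; k, h), h)$, which is the claimed formula (and in particular establishes directional differentiability of $\tilde A$).

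The computation is routine once \cref{lem:diff_psi} is available; the only point to watch is that the \emph{uniform in $u$} Lipschitz bound from \itemref{asm:standing:1} is exactly what lets us replace $y_t$ — which has only been expanded to first order — by its linearization $y^* + t\delta$ inside $A$, the same device already used in the proof of \cref{thm:diff_S}. Accordingly, no stronger (Bouligand or Fréchet) differentiability of $A$ is needed here.
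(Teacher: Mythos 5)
Your proposal is correct and follows essentially the same route as the paper: write $\tilde A(z,u) = A(\Psi(z,u),u)$, split the difference quotient by inserting $A(y^* + t\Psi'(z^*,u^*;k,h), u^* + th)$, kill the first piece with the uniform Lipschitz bound from \itemref{asm:standing:1} together with the directional differentiability of $\Psi$ from \cref{lem:diff_psi}, and pass to the limit in the second piece using \itemref{asm:Phi_diff:1}. Nothing to add.
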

If $A$ would be Lipschitz continuous w.r.t.\ both arguments,
it would be Hadamard directionally differentiable
and we could employ a general chain rule,
see, e.g., \cite[Propositions~2.47, 2.49]{BonnansShapiro2000}.
Since this is not the case, we have to adapt the proof to the situation at hand.
\begin{proof}
	We have
	\begin{equation*}
		\tilde A(z,u) = A(\Psi(z,u), u)
	\end{equation*}
	with the operator $\Psi$ from \cref{lem:diff_psi}.
	Let $(k,h) \in Y \times U$ be arbitrary.
	For $t > 0$ we have
	\begin{align*}
		&\frac{\tilde A(z^* + t k, u^* + t h) - \tilde A(z^*, u^*)}{t}
		=
		\frac{A(\Psi(z^* + t k, u^* + t h), u^* + t h) - A(\Psi(z^*, u^*), u^*)}{t}
		\\
		&\quad=
		\frac{A(\Psi(z^* + t k, u^* + t h), u^* + t h) - A(\Psi(z^*, u^*) + t \Psi'(z^*, u^*; k, h), u^* + t h)}{t}
		\\&\qquad
		+
		\frac{A(\Psi(z^*, u^*) + t \Psi'(z^*, u^*; k, h), u^* + t h) - A(\Psi(z^*, u^*), u^*)}{t}
		=:
		I_1 + I_2
		.
	\end{align*}
	Due to the Lipschitz continuity of $A$,
	the first term on the right-hand side is bounded by
	\begin{equation*}
		\norm{I_1}
		\le
		\frac{L_A}{t} \norm{\Psi(z^* + t k, u^* + t h) - \Psi(z^*, u^*) - t \Psi'(z^*, u^*; k, h)}
		\to
		0,
	\end{equation*}
	where we used the directional differentiability of $\Psi$, see \cref{lem:diff_psi}.
	Since $A$ is directionally differentiable,
	we have
	\begin{equation*}
		\lim_{t \searrow 0} I_2
		=
		A'(y^*, u^*; \Psi'(z^*, u^*; k, h), h)
	\end{equation*}
	and this shows the claim.
\end{proof}

\begin{theorem}
	\label{thm:diff_trafo}
	Suppose that \cref{asm:qge_small,asm:Phi_diff}
	are satisfied.
	Then,
	for each $h \in U$,
	there exists $t_0 > 0$ such that
	\begin{equation*}
		0 \in
		A(y_t, u + t h)
		+
		B(y_t - \Phi(y_t, u + t h), u + t h)
	\end{equation*}
	possesses a unique solution $y_t \in B_\varepsilon(y^*)$
	for all $t \in [0,t_0]$.
	Moreover, $(y_t - y^*)/t \to \delta$ as $t \searrow 0$,
	where $\delta \in Y$ is the unique solution of
	\begin{equation}
		\label{eq:diff_QGE}
		0
		\in
		A'(y^*, u^*; \delta, h)
		+
		DB(y^* - \Phi(y^*, u^*), u^* \mathbin{|} \xi^* )(\delta - \Phi'(y^*, u^*; \delta, h), h)
		,
	\end{equation}
	where $\xi^* = -A(y^*, u^*)$.
\end{theorem}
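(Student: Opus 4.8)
The plan is to reduce everything to \cref{thm:diff_S} via the reformulation \eqref{eq:qp_z} and then transport the result back with \cref{lem:diff_psi}. First I would check that the generalized equation \eqref{eq:qp_z}, i.e.\ $0\in\tilde A(z,u)+B(z,u)$, satisfies \cref{asm:diff} at $(z^*,u^*)$: \cref{lem:asm_tilde_A} (using \cref{asm:qge_small}) gives \cref{asm:standing} for $\tilde A$, and \cref{cor:tilde_a_direc} (using \cref{asm:Phi_diff}) gives directional differentiability of $\tilde A$ at $(z^*,u^*)$. Since $\Psi(z^*,u^*)=y^*$, we have $\tilde A(z^*,u^*)=A(y^*,u^*)$, so the point at which \itemref{asm:diff:2} must hold for \eqref{eq:qp_z} is $z^*-R^{-1}\tilde A(z^*,u^*)=y^*-\Phi(y^*,u^*)-R^{-1}A(y^*,u^*)=q^*-\phi^*$, which is precisely the point appearing in \itemref{asm:Phi_diff:2}. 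Hence \cref{asm:diff} holds and \cref{thm:diff_S} provides a local solution map $\tilde S$ of \eqref{eq:qp_z} with $\tilde S(u^*)=z^*$, directionally differentiable at $u^*$, whose derivative $\tilde\delta:=\tilde S'(u^*;h)$ is the unique solution of $0\in\tilde A'(z^*,u^*;\tilde\delta,h)+DB(z^*,u^*\mathbin{|}\xi^*)(\tilde\delta,h)$, where $-\tilde A(z^*,u^*)=-A(y^*,u^*)=\xi^*$.

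Next I would transport this back through \cref{lem:equiv_traf}. Since $\tilde S$ is continuous at $u^*$, for $t>0$ small enough $z_t:=\tilde S(u^*+th)\in\ZZ$ and $u^*+th\in\hat\UU$, so $y_t:=\Psi(z_t,u^*+th)\in B_\varepsilon(y^*)$ solves \eqref{eq:qp} by \cref{lem:equiv_traf}(b), and it is the unique solution in $B_\varepsilon(y^*)$ by \cref{lem:uniqueness}. To compute the limit of $(y_t-y^*)/t$, write $z_t=z^*+t(\tilde\delta+r_t)$ with $r_t\to 0$. By the uniform-in-$u$ Lipschitz estimate of \cref{lem:tranfo},
\[
	\norm{\Psi(z_t,u^*+th)-\Psi(z^*+t\tilde\delta,u^*+th)}\le (1-L_\Phi)^{-1}\,t\,\norm{r_t},
\]
which is $\oo(t)$, while \cref{lem:diff_psi} gives $\Psi(z^*+t\tilde\delta,u^*+th)=y^*+t\,\Psi'(z^*,u^*;\tilde\delta,h)+\oo(t)$. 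Therefore $(y_t-y^*)/t\to\delta:=\Psi'(z^*,u^*;\tilde\delta,h)$.

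Finally I would identify $\delta$ with the unique solution of \eqref{eq:diff_QGE}. By \cref{lem:diff_psi}, $\delta$ is characterised by $\tilde\delta=\delta-\Phi'(y^*,u^*;\delta,h)$, and by \cref{cor:tilde_a_direc} we have $\tilde A'(z^*,u^*;\tilde\delta,h)=A'(y^*,u^*;\Psi'(z^*,u^*;\tilde\delta,h),h)=A'(y^*,u^*;\delta,h)$; substituting these together with $z^*=y^*-\Phi(y^*,u^*)$ into the linearized equation for $\tilde\delta$ turns it into \eqref{eq:diff_QGE}, proving existence. Conversely, any solution $\delta$ of \eqref{eq:diff_QGE} yields, via $\tilde\delta:=\delta-\Phi'(y^*,u^*;\delta,h)$, a solution of the linearized equation for \eqref{eq:qp_z}, which is unique; hence $\tilde\delta=\tilde S'(u^*;h)$ and, by the uniqueness in \cref{lem:diff_psi}, $\delta=\Psi'(z^*,u^*;\tilde\delta,h)$ is unique. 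The only genuinely delicate point is the limit in the middle paragraph: one cannot invoke a chain rule for $y_t=\Psi(z_t,u^*+th)$ directly, since $\Psi$ is merely directionally differentiable and $\Phi$ is only continuous in $u$; the uniform Lipschitz bound on $\Psi(\cdot,u)$ is exactly what allows the remainder $r_t$ to be absorbed and the computation to be reduced to the fixed-direction statement of \cref{lem:diff_psi}.
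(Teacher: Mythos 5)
Your proposal is correct and follows essentially the same route as the paper's proof: reformulate \eqref{eq:qp} as \eqref{eq:qp_z}, verify \cref{asm:diff} for $\tilde A$ via \cref{lem:asm_tilde_A,cor:tilde_a_direc} with the base point $q^*-\phi^*$ matching \itemref{asm:Phi_diff:2}, apply \cref{thm:diff_S}, and transport back through $\Psi$ by splitting the difference quotient into a term controlled by the uniform Lipschitz bound of \cref{lem:tranfo} and a term handled by \cref{lem:diff_psi}, finally identifying the linearized equations via $k=\delta-\Phi'(y^*,u^*;\delta,h)$. No gaps; this matches the paper's argument step for step.
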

\begin{proof}
	Due to \cref{lem:equiv_traf},
	\eqref{eq:qp} is locally equivalent to \eqref{eq:qp_z}.
	Owing to the previous results,
	we can apply \cref{thm:diff_S}
	to \eqref{eq:qp_z}.
	In particular,
	\itemref{asm:diff:1}
	follows from \cref{cor:tilde_a_direc},
	whereas
	\itemref{asm:diff:2}
	requires
	directional differentiability of $J_B$
	at
	$(z^* - R^{-1} \tilde A(z^*, u^*) , u^*) = (q^* - \phi^*, u^*)$
	and this is ensured by
	\itemref{asm:Phi_diff:2}.
	If we denote by $z_t \in \ZZ$
	the local solution of \eqref{eq:qp_z}
	w.r.t.\ $u = u^* + t h$,
	then $(z_t - z^*)/t \to k$,
	where $k$ is the unique solution to
	\begin{equation*}
		0
		\in
		\tilde A'(z^*, u^*; k, h)
		+
		DB(z^*, u^* \mathbin{|} \xi^* )(k, h)
		=
		A'(y^*, u^*; \Psi'(z^*, u^*; k, h), h)
		+
		DB(z^*, u^* \mathbin{|} \xi^* )(k, h)
		.
	\end{equation*}
	It is clear that this equation is equivalent to \eqref{eq:diff_QGE}
	via the transformation
	\begin{equation*}
		k = \delta - \Phi'(y^*, u^*; \delta, h ),
		\qquad\text{i.e.,}\qquad
		\delta = \Psi'(z^*, u^*; k, h)
		.
	\end{equation*}
	Finally,
	\begin{align*}
		\frac{y_t - y^*}{t}
		&=
		\frac{
			\Psi(z_t, u + t h)
			-
			\Psi(z^*, u^*)
		}{t}
		\\&
		=
		\frac{
			\Psi(z_t, u + t h)
			-
			\Psi(z^* + t k, u + t h)
		}{t}
		+
		\frac{
			\Psi(z^* + t k, u + t h)
			-
			\Psi(z^*, u^*)
		}{t}
		\to
		\Psi'(z^*, u^*; k, h)
		=
		\delta,
	\end{align*}
	where we used that $\Psi(\cdot, u + t h)$ is Lipschitz continuous,
	see \cref{lem:tranfo}.
	The uniqueness of $y_t$ in $B_\varepsilon(y^*)$
	follows from \cref{lem:qge_small}.
\end{proof}
We mention
that
\cref{asm:qge_small}
is mainly
used
(see \cref{lem:asm_tilde_A})
to show
that 
the operator $\tilde A \colon \ZZ \times \hat \UU \to Y$
is
locally (uniformly) strongly monotone
in a neighborhood of
$(z^*, u^*)$.

\subsection{Via an iteration approach}
\label{subsec:iteration}
Here, we use a different approach to tackle
\begin{equation}
	\tag{\ref{eq:qp}}
	0 \in A(y,u) + B(y - \Phi(y,u), u)
	.
\end{equation}
For $u \in \UU$ sufficiently close to $u^*$,
we consider the sequence $\seq{y_{u,n}}_{n \in \N} \subset Y$
defined via
\begin{subequations}
	\label{eq:iterative}
	\begin{align}
		\label{eq:iterative_1}
		y_{u,0} &:= y^*,
		\\
		\label{eq:iterative_2}
		0 &\in A(y_{u,n}, u) + B(y_{u,n} - \Phi(y_{u,n-1}, u), u)
		.
	\end{align}
\end{subequations}
We will see that
this iteration
is well defined
in the sense that
\eqref{eq:iterative_2}
has a unique solution $y_{u,n} \in B_\varepsilon(y^*)$
under appropriate assumptions.
This idea was used in 
\cite{AlphonseHintermuellerRautenberg2019}
to show the directional differentiability of QVIs.
We demonstrate that this idea can also be applied to \eqref{eq:qp}.

In order to study \eqref{eq:iterative_2}
with the methods from \cref{sec:gen_eq},
we introduce the operators
$\AA \colon Y \times (Y \times U) \to Y\dualspace$,
$\BB \colon Y \times (Y \times U) \mto Y\dualspace$
via
\begin{equation}
	\label{eq:op_AA_BB}
	\AA(y, (\phi,u)) := A(y, u),
	\qquad
	\BB(y, (\phi,u)) := B(y - \phi, u)
	.
\end{equation}
Moreover, we set
$\phi^* := \Phi(y^*, u^*)$.
Then, under \cref{asm:basic_qge},
it is clear that \cref{asm:standing}
is satisfied by $(\AA, \BB)$ at $(y^*, (\phi^*, u^*))$.

Moreover,
for arbitrary $(q, (\phi, u)) \in Y \times (Y \times \UU)$ and $\rho > 0$,
the point $y = J_{\rho \BB}(q, (\phi, u))$
solves
\begin{equation*}
	0
	\in
	R (y - q) + \rho \BB(y, (\phi, u))
	=
	R( (y-\phi) - (q-\phi) ) + \rho B(y - \phi, u),
\end{equation*}
i.e.,
we have the relation
\begin{equation}
	\label{eq:res_BB}
	J_{\rho \BB}(q, (\phi, u))
	=
	J_{\rho B}(q - \phi, u) + \phi
\end{equation}
between the resolvents of $B$ and $\BB$.

Next, we address the local solvability of \eqref{eq:iterative_2}.
\begin{lemma}
	\label{lem:solve_fix}
	Let \cref{asm:basic_qge} be satisfied and fix $\rho$, $c$ and $q_\rho^*$ as in \cref{thm:solvability_vi}.
	Then,
	for all
	$r  \in (0,\varepsilon]$,
	$(\phi, u) \in Y \times \UU$
	with
	\begin{equation*}
			2 \norm{\phi - \phi^*}
			+
			\norm{
				J_{\rho B}(q_\rho^* - \phi^*, u)
				-
				J_{\rho B}(q_\rho^* - \phi^*, u^*)
			}
			+
			\rho \norm{A(y^*, u) - A(y^*, u^*)}
		\le
		(1-c) r
	\end{equation*}
	the equation
	\begin{equation*}
		0 \in A(z, u) + B(z - \phi, u)
	\end{equation*}
	has a unique solution $z \in B_r(y^*)$.
\end{lemma}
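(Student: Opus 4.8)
The plan is to recognize the equation $0 \in A(z,u) + B(z-\phi,u)$ as an instance of the generalized equation \eqref{eq:p} for the pair $(\AA,\BB)$ from \eqref{eq:op_AA_BB}, with $(\phi,u) \in Y\times\UU$ playing the role of the perturbation parameter, and then invoke \cref{thm:solvability_vi} with $\zeta = 0$. As observed right after \eqref{eq:op_AA_BB}, \cref{asm:standing} holds for $(\AA,\BB)$ at $(y^*,(\phi^*,u^*))$ with exactly the same constants $\varepsilon,\mu_A,L_A$; hence the quantities $\rho$, $c$ and $q_\rho^*$ from \cref{thm:solvability_vi} are the very ones fixed in the statement. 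Moreover $(y^*,(\phi^*,u^*))$ is a solution of $0 \in \AA(\cdot,(\phi^*,u^*)) + \BB(\cdot,(\phi^*,u^*))$, since $\phi^* = \Phi(y^*,u^*)$ and $(y^*,u^*)$ solves \eqref{eq:qp} by \cref{asm:basic_qge}.

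The only thing to check is the smallness condition \eqref{eq:est} for $(\AA,\BB)$ at $\zeta = 0$. Because $\AA(y^*,(\phi^*,u^*)) = A(y^*,u^*)$, the base point appearing there is again $q_\rho^*$, and its second term equals $\rho\norm{\AA(y^*,(\phi,u)) - \AA(y^*,(\phi^*,u^*))} = \rho\norm{A(y^*,u) - A(y^*,u^*)}$, matching the last term of the hypothesis. For the first term I would use the resolvent identity \eqref{eq:res_BB}:
\begin{equation*}
	J_{\rho\BB}(q_\rho^*,(\phi,u)) - J_{\rho\BB}(q_\rho^*,(\phi^*,u^*))
	=
	\bigl(J_{\rho B}(q_\rho^* - \phi, u) + \phi\bigr)
	-
	\bigl(J_{\rho B}(q_\rho^* - \phi^*, u^*) + \phi^*\bigr).
\end{equation*}
Inserting $\pm\, J_{\rho B}(q_\rho^* - \phi^*, u)$ and using that $J_{\rho B}(\cdot,u)$ is non-expansive (being the resolvent of a maximally monotone operator), the norm of this difference is bounded by $2\norm{\phi - \phi^*} + \norm{J_{\rho B}(q_\rho^* - \phi^*, u) - J_{\rho B}(q_\rho^* - \phi^*, u^*)}$. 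Consequently the left-hand side of \eqref{eq:est} for $(\AA,\BB)$ is dominated by the left-hand side of the hypothesis of the lemma, which is $\le (1-c)r$.

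Thus \cref{thm:solvability_vi} (applied with $\zeta = 0$) yields a unique solution $z \in B_r(y^*)$ of $0 \in \AA(z,(\phi,u)) + \BB(z,(\phi,u))$; by the definition of $\AA$ and $\BB$ this is precisely $0 \in A(z,u) + B(z-\phi,u)$, which proves the claim. The whole argument is a translation through \eqref{eq:op_AA_BB} and \eqref{eq:res_BB}; the one place requiring a little care — and the only candidate for an obstacle — is the estimate of the resolvent difference above, where the extra factor $2$ in front of $\norm{\phi-\phi^*}$ must be produced exactly by the triangle inequality together with the $1$-Lipschitz property of $J_{\rho B}(\cdot,u)$.
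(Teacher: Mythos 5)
Your proposal is correct and follows essentially the same route as the paper: recast the equation via \eqref{eq:op_AA_BB}, apply \cref{thm:solvability_vi} with $\zeta = 0$, and verify \eqref{eq:est} by combining the resolvent identity \eqref{eq:res_BB} with the triangle inequality and the $1$-Lipschitz property of $J_{\rho B}(\cdot,u)$, which yields exactly the factor $2\norm{\phi-\phi^*}$. Nothing is missing.
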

\begin{proof}
	The equation can be recast as
	\begin{equation*}
		0 \in \AA(y, (\phi, u)) + \BB(y, (\phi, u))
	\end{equation*}
	and we are going to apply \cref{thm:solvability_vi} with $\zeta = 0$.
	It is clear that \cref{asm:standing}
	is satisfied by $(\AA, \BB)$
	and the operator $\AA$ possesses the same constants as $A$.
	Thus, it remains to show that
	\begin{equation*}
		\norm{
			J_{\rho \BB}(q_\rho^*, (\phi,u))
			-
			J_{\rho \BB}(q_\rho^*, (\phi^*, u^*))
		}
		+
		\rho \norm{\AA(y^*, (\phi,u)) - \AA(y^*, (\phi^*,u^*))}
		\le
		(1 - c) r
	\end{equation*}
	is satisfied.
	This, however, follows from the estimate
	\begin{align*}
		&\norm{
			J_{\rho \BB}(q_\rho^*, (\phi,u))
			-
			J_{\rho \BB}(q_\rho^*, (\phi^*, u^*))
		}
		=
		\norm{
			J_{\rho B}( q_\rho^* - \phi, u) + \phi
			-
			J_{\rho B}( q_\rho^* - \phi^*, u^*) - \phi^*
		}
		\\
		&\qquad
		\le
		\norm{
			J_{\rho B}( q_\rho^* - \phi, u)
			-
			J_{\rho B}( q_\rho^* - \phi^*, u)
		}
		+
		\norm{
			J_{\rho B}( q_\rho^* - \phi^*, u)
			-
			J_{\rho B}( q_\rho^* - \phi^*, u^*)
		}
		+
		\norm{\phi - \phi^*}
		\\
		&\qquad
		\le
		2 \norm{\phi - \phi^*}
		+
		\norm{
			J_{\rho B}( q_\rho^* - \phi^*, u)
			-
			J_{\rho B}( q_\rho^* - \phi^*, u^*)
		}
		.
		\qedhere
	\end{align*}
\end{proof}
\begin{lemma}
	\label{lem:solve_fix_2}
	Let \cref{asm:basic_qge} be satisfied and fix $\rho$, $c$ and $q_\rho^*$ as in \cref{thm:solvability_vi}.
	Then, there exists a constant $\lambda \in (0,\varepsilon]$,
	such that
	for all
	$y \in B_\lambda(y^*)$
	and
	all $u \in \UU$
	with
	\begin{equation}
		\label{eq:est_phi_u}
		\begin{aligned}
			&
			\CC_\rho(u)
			:=
			2 \norm{ \Phi(y^*,u) - \Phi(y^*, u^*) }
			+
				\norm{
					J_{\rho B}(q_\rho^* - \phi^*, u)
					-
					J_{\rho B}(q_\rho^* - \phi^*, u^*)
				}
				\\&\mspace{350mu}
				+
				\rho \norm{A(y^*, u) - A(y^*, u^*)}
			\le
			\frac{1-c}{2} \varepsilon
		\end{aligned}
	\end{equation}
	the equation
	\begin{equation*}
		0 \in A(z, u) + B(z - \Phi(y, u), u)
	\end{equation*}
	has a unique solution $z := T_u(y) \in B_\varepsilon(y^*)$.
\end{lemma}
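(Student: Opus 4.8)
The plan is to reduce everything to \cref{lem:solve_fix}. Fix $y \in B_\lambda(y^*)$ and $u \in \UU$ satisfying \eqref{eq:est_phi_u}, and set $\phi := \Phi(y,u)$. The equation in the statement is then exactly
\begin{equation*}
	0 \in A(z,u) + B(z - \phi, u),
\end{equation*}
which is precisely the equation treated in \cref{lem:solve_fix}. Hence, if we can verify the hypothesis of \cref{lem:solve_fix} with this $\phi$ and with $r := \varepsilon$, we immediately obtain a unique solution $z =: T_u(y) \in B_\varepsilon(y^*)$, which is the assertion. So the whole proof comes down to checking
\begin{equation*}
	2\norm{\Phi(y,u) - \phi^*}
	+ \norm{J_{\rho B}(q_\rho^* - \phi^*, u) - J_{\rho B}(q_\rho^* - \phi^*, u^*)}
	+ \rho \norm{A(y^*,u) - A(y^*,u^*)}
	\le (1-c)\varepsilon.
\end{equation*}

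For the first summand I would use the triangle inequality together with the uniform contraction property of $\Phi(\cdot,u)$ from \cref{asm:basic_qge}:
\begin{equation*}
	\norm{\Phi(y,u) - \phi^*}
	\le \norm{\Phi(y,u) - \Phi(y^*,u)} + \norm{\Phi(y^*,u) - \Phi(y^*,u^*)}
	\le L_\Phi \norm{y - y^*} + \norm{\Phi(y^*,u) - \Phi(y^*,u^*)}.
\end{equation*}
Inserting this bound, the left-hand side of the inequality to be checked is at most $2 L_\Phi \norm{y - y^*} + \CC_\rho(u)$, where $\CC_\rho(u)$ is exactly the quantity appearing in \eqref{eq:est_phi_u}. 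Using $\norm{y - y^*} \le \lambda$ together with \eqref{eq:est_phi_u}, this is in turn at most $2 L_\Phi \lambda + \frac{1-c}{2}\varepsilon$.

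It therefore suffices to fix, once and for all and independently of $y$ and $u$, a radius $\lambda \in (0,\varepsilon]$ with $2 L_\Phi \lambda \le \frac{1-c}{2}\varepsilon$; for instance $\lambda := \frac{(1-c)\varepsilon}{4(1 + L_\Phi)}$ works, since then $2 L_\Phi \lambda = \frac{L_\Phi}{1+L_\Phi}\cdot\frac{(1-c)\varepsilon}{2} \le \frac{(1-c)\varepsilon}{2}$ and $\lambda < \varepsilon$. With this choice the bound above is $\le (1-c)\varepsilon$, so \cref{lem:solve_fix} applies and the lemma follows. There is no genuine difficulty in this argument; the only points deserving attention are that $\lambda$ must be chosen uniformly in $y$ and $u$, and that the factor $\frac{1-c}{2}$ in \eqref{eq:est_phi_u} (rather than $1-c$) is precisely what leaves the slack needed to absorb the contribution $2 L_\Phi\norm{y - y^*}$ coming from the freedom in $y$.
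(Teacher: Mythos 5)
Your proof is correct and follows essentially the same route as the paper: reduce to \cref{lem:solve_fix} with $\phi = \Phi(y,u)$ and $r = \varepsilon$, bound $\norm{\Phi(y,u)-\phi^*}$ via the triangle inequality and the $L_\Phi$-contraction, and choose $\lambda$ uniformly so that the term $2L_\Phi\lambda$ fits into the slack $\tfrac{1-c}{2}\varepsilon$. Your choice $\lambda = \tfrac{(1-c)\varepsilon}{4(1+L_\Phi)}$ is in fact slightly more careful than the paper's $\lambda = \tfrac{(1-c)\varepsilon}{4L_\Phi}$, since it also covers $L_\Phi = 0$ and guarantees $\lambda \le \varepsilon$.
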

\begin{proof}
	Using
	\begin{align*}
		\norm{ \Phi(y,u) - \phi^* }
		&\le
		\norm{ \Phi(y,u) - \Phi(y^*, u) }
		+
		\norm{ \Phi(y^*,u) - \Phi(y^*, u^*) }
		\\&
		\le
		L_\Phi \norm{y - y^*}
		+
		\norm{ \Phi(y^*,u) - \Phi(y^*, u^*) }
		\le
		L_\Phi \lambda
		+
		\norm{ \Phi(y^*,u) - \Phi(y^*, u^*) }
	\end{align*}
	the assertion follows from \cref{lem:solve_fix}
	with
	$\lambda = (1 - c) \varepsilon / (4 L_\Phi)$.
\end{proof}
In the next lemma,
we apply the Banach fixed-point theorem
to $T_u$
in order to show the convergence of \eqref{eq:iterative}.
\begin{lemma}
	\label{lem:BFP_it}
	Let \cref{asm:qge_small} be satisfied and fix $\rho$, $c$, $q_\rho^*$ as in \cref{thm:solvability_vi}
	and choose $\lambda$ according to \cref{lem:solve_fix_2}.
	\begin{enumerate}[label=(\alph*)]
		\item
			\label{lem:BFP_it:1}
			There exists a constant $\tilde c \in (0,1)$,
			such that $T_u \colon B_\lambda(y^*) \to B_\varepsilon(y^*)$
			is Lipschitz continuous with modulus $\tilde c$
			for all $u \in \UU$.
		\item
			\label{lem:BFP_it:2}
			If $u \in \UU$ is chosen such that
			$\CC_\rho(u) \le (1-c) \min \set{ (1-\tilde c) \lambda, \varepsilon }$,
			then
			$T_u$ maps $B_\lambda(y^*)$ to $B_\lambda(y^*)$.
			Moreover,
			the sequence $\seq{y_{u,n}}_{n \in \N}$ given by the iteration \eqref{eq:iterative}
			satisfies
			\begin{equation*}
				\norm{ y_{u,n} - y_u }
				\le
				\frac{ \tilde c^n }{(1- c) (1-\tilde c)} \CC_\rho(u)
				,
			\end{equation*}
			where $y_u \in B_\lambda(y^*)$
			is the solution of \eqref{eq:qp}.
	\end{enumerate}
\end{lemma}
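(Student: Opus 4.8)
The plan is to recognize that the iteration \eqref{eq:iterative} is precisely the fixed-point iteration $y_{u,n}=T_u(y_{u,n-1})$ for the map $T_u$ introduced in \cref{lem:solve_fix_2}, and then to verify the hypotheses of the Banach fixed-point theorem for $T_u$ on the closed ball $B_\lambda(y^*)$. For part~\ref{lem:BFP_it:1}, I would take $y_1,y_2\in B_\lambda(y^*)$ and set $z_i:=T_u(y_i)\in B_\varepsilon(y^*)$. By the definition of $T_u$ one has $-A(z_i,u)\in B(z_i-\Phi(y_i,u),u)$, so monotonicity of $B(\cdot,u)$ gives
\[
	\dual{A(z_2,u)-A(z_1,u)}{z_2-\Phi(y_2,u)-z_1+\Phi(y_1,u)}\le 0 .
\]
Since $B_\lambda(y^*)\subset B_\varepsilon(y^*)$, \cref{lem:qge_small} bounds the left-hand side from below by $C\big(\norm{z_2-z_1}^2-\tilde c^2\norm{y_2-y_1}^2\big)$ with exactly the constants $C>0$ and $\tilde c\in(0,1)$ furnished there; hence $\norm{z_2-z_1}\le\tilde c\,\norm{y_2-y_1}$, which is the claimed Lipschitz estimate with a modulus $\tilde c<1$ independent of $u$.

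For part~\ref{lem:BFP_it:2}, the first step is the self-mapping property. For $y\in B_\lambda(y^*)$, applying part~\ref{lem:BFP_it:1} to the pair $y,y^*\in B_\lambda(y^*)$ yields
\[
	\norm{T_u(y)-y^*}\le\norm{T_u(y)-T_u(y^*)}+\norm{T_u(y^*)-y^*}\le\tilde c\,\lambda+\norm{T_u(y^*)-y^*},
\]
so it suffices to bound $\norm{T_u(y^*)-y^*}$. Now $T_u(y^*)$ solves $0\in A(z,u)+B(z-\Phi(y^*,u),u)$, which is exactly the equation in \cref{lem:solve_fix} with $\phi=\Phi(y^*,u)$, and the quantity on the left-hand side of the condition required there coincides with $\CC_\rho(u)$. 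Hence, whenever $\CC_\rho(u)\le(1-c)r$ with $r\in(0,\varepsilon]$, we obtain $T_u(y^*)\in B_r(y^*)$. Taking $r=(1-\tilde c)\lambda$ and invoking the hypothesis $\CC_\rho(u)\le(1-c)(1-\tilde c)\lambda$ gives $\norm{T_u(y^*)-y^*}\le(1-\tilde c)\lambda$; the remaining bound $\CC_\rho(u)\le(1-c)\varepsilon$ in the hypothesis guarantees, through \cref{lem:solve_fix_2} (whose condition \eqref{eq:est_phi_u} it implies, after possibly shrinking $\lambda$), that $T_u$ is defined on all of $B_\lambda(y^*)$. Combined with part~\ref{lem:BFP_it:1}, this shows $T_u\colon B_\lambda(y^*)\to B_\lambda(y^*)$ is a $\tilde c$-contraction of a complete metric space.

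The Banach fixed-point theorem then provides a unique fixed point $y_u\in B_\lambda(y^*)$; unfolding $y_u=T_u(y_u)$ gives $0\in A(y_u,u)+B(y_u-\Phi(y_u,u),u)$, so $y_u$ solves \eqref{eq:qp}, and by \cref{lem:uniqueness} it is the unique such solution in $B_\varepsilon(y^*)$. Since $y_{u,0}=y^*\in B_\lambda(y^*)$ and $y_{u,n}=T_u(y_{u,n-1})$, an induction using the self-mapping property shows that all iterates stay in $B_\lambda(y^*)$, and the standard a priori error bound for the fixed-point iteration gives $\norm{y_{u,n}-y_u}\le\frac{\tilde c^{n}}{1-\tilde c}\norm{T_u(y^*)-y^*}$. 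Finally, choosing $r=\CC_\rho(u)/(1-c)$ in the solvability bound above (admissible since the hypothesis forces $r\le\varepsilon$) yields $\norm{T_u(y^*)-y^*}\le\CC_\rho(u)/(1-c)$, and substituting this proves the claimed rate. The only genuinely substantive step is the contraction estimate in part~\ref{lem:BFP_it:1}: it is the interplay between monotonicity of $B$ and \cref{lem:qge_small} that produces a contraction modulus which is simultaneously independent of $u$ and strictly below one, and once this is in hand all remaining steps are routine bookkeeping with the solvability estimates from \cref{thm:solvability_vi,lem:solve_fix,lem:solve_fix_2}.
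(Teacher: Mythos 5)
Your argument is correct and follows the paper's own proof essentially verbatim: part (a) via the monotonicity of $B(\cdot,u)$ combined with \cref{lem:qge_small} to get the $u$-independent modulus $\tilde c$, and part (b) via \cref{lem:solve_fix} with $\phi=\Phi(y^*,u)$ and $r=(1-c)^{-1}\CC_\rho(u)$ to bound $\norm{T_u(y^*)-y^*}$, then the triangle inequality for the self-map property and the standard Banach fixed-point a-priori estimate. Your extra remarks (invoking \cref{lem:uniqueness} and flagging the well-definedness of $T_u$ on $B_\lambda(y^*)$) are harmless additions, not a different route.
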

\begin{proof}
	We start by proving \ref{lem:BFP_it:1}.
	Let $y_1, y_2 \in B_\lambda(y^*)$ be given and set $z_i := T_u(y_i) \in B_\varepsilon(y^*)$, $i = 1, 2$.
	Then, $-A(z_i, u) \in B(z_i - \Phi(y_i, u), u)$
	and, thus, the monotonicity of $B$ yields
	\begin{equation*}
		\dual{
			A(z_2, u) - A(z_1, u)
		}{
			z_1 - \Phi(y_1, u) - z_2 + \Phi(y_2, u)
		}
		\ge 0
		.
	\end{equation*}
	Consequently,
	\cref{lem:qge_small}
	implies
	\begin{equation*}
		0
		\ge
		C (\norm{z_2 - z_1}^2 - \tilde c^2 \norm{y_2 - y_1}^2),
	\end{equation*}
	i.e., $\norm{z_2 - z_1}^2 \le \tilde c \norm{y_2 - y_1}$.

	Now, let $u \in \UU$ be chosen as in \ref{lem:BFP_it:2}.
	This enables us to apply
	\cref{lem:solve_fix}
	with
	the choices
	$r = (1-c)^{-1} \CC_\rho(u) \le \varepsilon$ and $\phi = \Phi(y^*, u)$.
	This shows that $T_u(y^*)$, which is the solution of
	$0 \in A(z, u) + B(z - \Phi(y^*, u), u)$,
	satisfies $\norm{ T_u(y^*) - y^* } \le r \le (1-\tilde c) \lambda$.
	Consequently, every $y \in B_\lambda(y^*)$ satisfies
	\begin{equation*}
		\norm{T_u(y) - y^*}
		\le
		\norm{T_u(y) - T_u(y^*)}
		+
		\norm{T_u(y^*) - y^*}
		\le
		\tilde c \lambda + (1-\tilde c) \lambda
		=
		\lambda.
	\end{equation*}
	Thus, we can apply the Banach fixed-point theorem to
	obtain the existence of $y_u \in B_\lambda(y^*)$.
	Due to $y_{0,n} = y^*$ and $y_{1,n} = T_u(y^*)$,
	this also yields the a-priori estimate
	\begin{equation*}
		\norm{ y_{u,n} - y_u }
		\le
		\frac{\tilde c^n}{1 - \tilde c} \norm{y_{u,1} - y_{u,0} }
		\le
		\frac{ \tilde c^n }{(1- c) (1-\tilde c)} \CC_\rho(u)
		.
		\qedhere
	\end{equation*}
\end{proof}
The next lemma helps us to control the term
$\CC_\rho(u^* + t h)$.
\begin{lemma}
	\label{lem:control_cc}
	Let \cref{asm:Phi_diff} be satisfied
	and fix $h \in U$.
	Then, for any $\rho > 0$, there exist
	constants $C \ge 0$ and $t_0 > 0$
	such that
	\begin{equation*}
		\CC_\rho( u^* + t h )
		\le
		C t
		\qquad\forall t \in [0,t_0].
	\end{equation*}
\end{lemma}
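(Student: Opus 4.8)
The plan is to estimate the three summands in the definition \eqref{eq:est_phi_u} of $\CC_\rho$ one at a time along the ray $u = u^* + t h$, using the elementary observation that if a map $F$ between normed spaces is directionally differentiable at a point $x^*$, then for any fixed direction $d$ the difference quotient $t^{-1}\parens*{F(x^* + t d) - F(x^*)}$ converges as $t \searrow 0$ and is therefore bounded on some interval $(0, \tilde t\,]$; consequently $\norm{F(x^* + t d) - F(x^*)} \le \tilde C\, t$ for all $t \in [0, \tilde t\,]$, the value $t = 0$ being trivial.

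Applying this with $F = \Phi$ and $F = A$ and the direction $(0, h) \in Y \times U$, \itemref{asm:Phi_diff:3} and \itemref{asm:Phi_diff:1} immediately yield constants $C_1, C_3 \ge 0$ and $t_1 > 0$ with
\begin{equation*}
	\norm{\Phi(y^*, u^* + t h) - \Phi(y^*, u^*)} \le C_1 t
	\qquad\text{and}\qquad
	\norm{A(y^*, u^* + t h) - A(y^*, u^*)} \le C_3 t
\end{equation*}
for all $t \in [0, t_1]$. For the remaining term I first relate the evaluation point $(q_\rho^* - \phi^*, u^*)$ of $J_{\rho B}$ to the point $(q^* - \phi^*, u^*)$ appearing in \itemref{asm:Phi_diff:2}. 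Since $(y^*, u^*)$ solves \eqref{eq:qp} we have $-A(y^*, u^*) \in B(y^* - \phi^*, u^*)$, and because $q^* - y^* = -R^{-1} A(y^*, u^*)$ this gives $R\parens*{(q^* - \phi^*) - (y^* - \phi^*)} = -A(y^*, u^*) \in B(y^* - \phi^*, u^*)$, hence $J_B(q^* - \phi^*, u^*) = y^* - \phi^* =: z^*$. Now I invoke \cref{cor:dir_dif_res} with reference point $(q^* - \phi^*, u^*)$ and $\rho_0 = 1$: its assertion~(i) is exactly \itemref{asm:Phi_diff:2}, so assertion~(ii) shows that $J_{\rho B}$ is directionally differentiable at $\parens*{z^* + \rho\parens*{(q^* - \phi^*) - z^*}, u^*}$, and a one-line computation using $q^* - y^* = -R^{-1} A(y^*, u^*)$ identifies $z^* + \rho\parens*{(q^* - \phi^*) - z^*} = y^* - \rho R^{-1} A(y^*, u^*) - \phi^* = q_\rho^* - \phi^*$. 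Thus the same observation as above, applied in the direction $(0, h)$, provides $C_2 \ge 0$ and $t_2 > 0$ with $\norm{J_{\rho B}(q_\rho^* - \phi^*, u^* + t h) - J_{\rho B}(q_\rho^* - \phi^*, u^*)} \le C_2 t$ for all $t \in [0, t_2]$.

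Combining the three estimates with $t_0 := \min\set{t_1, t_2}$, shrunk if necessary so that $u^* + t h \in \UU$ for all $t \in [0, t_0]$, yields $\CC_\rho(u^* + t h) \le \parens*{2 C_1 + C_2 + \rho C_3}\, t =: C\, t$ for every $t \in [0, t_0]$, which is the claim. The only step that is not pure bookkeeping is the identification of $q_\rho^* - \phi^*$ with the point at which \cref{cor:dir_dif_res} delivers the directional differentiability of $J_{\rho B}$; once this is settled, the statement reduces entirely to the boundedness of convergent difference quotients, so I expect no serious obstacle.
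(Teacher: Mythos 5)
Your proposal is correct and follows essentially the same route as the paper's proof: both verify $y^*-\phi^*=J_{\rho B}(q_\rho^*-\phi^*,u^*)$, transfer the directional differentiability of $J_B$ at $(q^*-\phi^*,u^*)$ (Assumption \itemref{asm:Phi_diff:2}) to $J_{\rho B}$ at $(q_\rho^*-\phi^*,u^*)$ via \cref{cor:dir_dif_res}, and then bound each of the three terms in $\CC_\rho$ by the boundedness of convergent difference quotients. Your write-up merely makes explicit the bookkeeping that the paper leaves to the reader.
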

\begin{proof}
	With
	$q_\rho^* = y^* - \rho R^{-1} A(y^*, u^*)$
	and
	$\phi^* = \Phi(y^*, u^*)$
	we have
	$y^* - \phi^* = J_{\rho B}( q_\rho^* - \phi^*, u^*)$.
	Owing to \cref{cor:dir_dif_res},
	the directional differentiability
	of
	$J_{\rho B}$
	at
	$( q_\rho^* - \phi^*, u^*)$
	follows
	from the directional differentiability
	of
	$J_B$
	at
	$(q^* - \phi^*, u^*)$
	and this is guaranteed by
	\cref{asm:Phi_diff}.
	Hence, for all terms appearing in the definition \eqref{eq:est_phi_u}
	of $\CC_\rho(u^* + t h)$,
	we can utilize the directional differentiabilities
	of the involved operators
	and this yields the desired estimate.
\end{proof}

\begin{theorem}
	\label{thm:diff_via_it}
	Let \cref{asm:qge_small,asm:Phi_diff} be satisfied.
	For all $h \in U$ there exists $t_0 > 0$,
	such that
	for all $t \in [0,t_0]$,
	the equation
	\begin{equation*}
		0 \in
		A(y_t, u + t h)
		+
		B(y_t - \Phi(y_t, u + t h), u + t h)
	\end{equation*}
	has a unique solution $y_t \in B_\lambda(y^*)$,
	where $\lambda$ is chosen as in \cref{lem:solve_fix_2}.
	Moreover,
	the difference quotient $(y_t - y^*)/t$
	converges strongly in $Y$ towards $\delta \in Y$
	which is the unique solution of the linearized equation
	\begin{equation*}
		\tag{\ref{eq:diff_QGE}}
		0
		\in
		A'(y^*, u^*; \delta, h)
		+
		DB(y^* - \Phi(y^*, u^*), u^* \mathbin{|} \xi^* )(\delta - \Phi'(y^*, u^*; \delta, h), h)
		,
	\end{equation*}
	where $\xi^* = -A(y^*, u^*)$.
\end{theorem}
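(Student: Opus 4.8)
The plan is to run the iteration \eqref{eq:iterative}, differentiate it term by term, and then send $n \to \infty$, using the geometric a priori bound of \cref{lem:BFP_it} to control the error uniformly in $t$. Fix $h \in U$ and abbreviate $u_t := u^* + t h$, $\phi^* := \Phi(y^*, u^*)$, $z^* := y^* - \phi^*$, $\xi^* := -A(y^*, u^*)$ and $q^* := y^* - R^{-1} A(y^*, u^*)$; let $\rho, c, \tilde c, \lambda$ be fixed as in \cref{lem:BFP_it}, and recall $\AA, \BB$ from \eqref{eq:op_AA_BB}. First I would settle existence and uniqueness of $y_t$: by \cref{lem:control_cc} there are $C, t_0 > 0$ with $\CC_\rho(u_t) \le C t$ on $[0, t_0]$, so after shrinking $t_0$ the smallness hypothesis of \itemref{lem:BFP_it:2} holds for every such $t$, producing the solution $y_t := y_{u_t} \in B_\lambda(y^*)$, uniqueness in $B_\varepsilon(y^*) \supseteq B_\lambda(y^*)$ being \cref{lem:uniqueness}. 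The quantitative output I keep is
\begin{equation}
	\label{eq:apriori_it}
	\norm{y_{u_t, n} - y_t}
	\le
	\frac{\tilde c^{\,n}}{(1-c)(1-\tilde c)}\, \CC_\rho(u_t)
	\le
	\frac{\tilde c^{\,n}\, C}{(1-c)(1-\tilde c)}\, t,
	\qquad t \in (0, t_0],
\end{equation}
whose right-hand side, divided by $t$, is bounded in $t$ and vanishes as $n \to \infty$.

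Next I isolate the role of $\Phi$. Let $\Sigma(\phi, u)$ denote the (locally unique) solution of the generalized equation $0 \in \AA(\cdot, (\phi, u)) + \BB(\cdot, (\phi, u))$, so that $y_{u_t, n} = \Sigma(\Phi(y_{u_t, n-1}, u_t), u_t)$ by \eqref{eq:iterative_2}. I need two facts about $\Sigma$. \emph{(a)} For each $u$ the map $\phi \mapsto \Sigma(\phi, u)$ is Lipschitz with constant $\gamma_A = L_A/\mu_A$; this falls out of testing the monotonicity of $B(\cdot, u)$ against the inclusions $-A(\Sigma(\phi_i, u), u) \in B(\Sigma(\phi_i, u) - \phi_i, u)$. \emph{(b)} $\Sigma$ is directionally differentiable at $(\phi^*, u^*)$: this is \cref{thm:diff_S} applied to $(\AA, \BB)$, whose standing hypotheses hold by \cref{asm:basic_qge}, while its differentiability hypotheses follow from \cref{asm:Phi_diff} — \itemref{asm:diff:1} because $\AA(\cdot, (\phi, u)) = A(\cdot, u)$, and \itemref{asm:diff:2} because \eqref{eq:res_BB} gives $J_\BB(q, (\phi, u)) = J_B(q - \phi, u) + \phi$, which reduces differentiability of $J_\BB$ at $(q^*, (\phi^*, u^*))$ to that of $J_B$ at $(q^* - \phi^*, u^*)$, i.e.\ to \itemref{asm:Phi_diff:2}. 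Unwinding \eqref{eq:def_db} and \eqref{eq:res_BB}, $\delta = \Sigma'(\phi^*, u^*; (\psi, h))$ turns out to be the unique solution of $0 \in A'(y^*, u^*; \delta, h) + DB(z^*, u^* \mathbin{|} \xi^*)(\delta - \psi, h)$.

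I would then set $\delta_0 := 0$, recursively $\delta_n := \Sigma'\bigl(\phi^*, u^*; (\Phi'(y^*, u^*; \delta_{n-1}, h), h)\bigr)$, and prove by induction that $(y_{u_t, n} - y^*)/t \to \delta_n$ as $t \searrow 0$, for every $n \in \N$. The case $n = 0$ is trivial. For the step, put $\psi_{n-1} := \Phi'(y^*, u^*; \delta_{n-1}, h)$ and $\phi_t := \Phi(y_{u_t, n-1}, u_t)$; the decomposition
\begin{equation*}
	\phi_t - \phi^*
	=
	\bigl[\Phi(y_{u_t, n-1}, u_t) - \Phi(y^* + t\delta_{n-1}, u_t)\bigr]
	+
	\bigl[\Phi(y^* + t\delta_{n-1}, u_t) - \Phi(y^*, u^*)\bigr]
\end{equation*}
gives $(\phi_t - \phi^*)/t \to \psi_{n-1}$, since the first bracket is $\le L_\Phi \norm{y_{u_t, n-1} - y^* - t\delta_{n-1}} = \oo(t)$ by the induction hypothesis and the $L_\Phi$-contraction property of $\Phi(\cdot, u)$ from \cref{asm:basic_qge}, while the second, divided by $t$, tends to $\psi_{n-1}$ by \itemref{asm:Phi_diff:3}. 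The analogous decomposition
\begin{equation*}
	\Sigma(\phi_t, u_t) - \Sigma(\phi^*, u^*)
	=
	\bigl[\Sigma(\phi_t, u_t) - \Sigma(\phi^* + t\psi_{n-1}, u_t)\bigr]
	+
	\bigl[\Sigma(\phi^* + t\psi_{n-1}, u_t) - \Sigma(\phi^*, u^*)\bigr]
\end{equation*}
then completes the step: the first bracket is $\le \gamma_A \norm{\phi_t - \phi^* - t\psi_{n-1}} = \oo(t)$ by \emph{(a)}, and the second, divided by $t$, tends to $\Sigma'(\phi^*, u^*; (\psi_{n-1}, h)) = \delta_n$ by \emph{(b)}.

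The last step is to send $n \to \infty$. Here \eqref{eq:diff_QGE} is itself of the form \eqref{eq:qp}, with data $A'(y^*, u^*; \cdot, h)$ (strongly monotone and Lipschitz with the \emph{same} constants $\mu_A, L_A$, as directional derivatives inherit these), $DB(z^*, u^* \mathbin{|} \xi^*)(\cdot, h)$ (maximally monotone, \cref{lem:max_mon_dif_dep}) and $\Phi'(y^*, u^*; \cdot, h)$ (an $L_\Phi$-contraction); in particular \cref{asm:qge_small} carries over — under \itemref{asm:qge_small:2} this uses the additional remark that a pointwise limit of gradients of convex functions is cyclically monotone, hence again such a gradient. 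Applying \cref{lem:qge_small} to this linearized data and arguing as in the proof of \itemref{lem:BFP_it:1}, the map underlying the recursion for $\delta_n$ is a contraction on $Y$ with modulus $\tilde c$; its Picard iteration started at $\delta_0 = 0$ therefore converges, $\delta_n \to \delta$ with $\norm{\delta_n - \delta} \le \tilde c^{\,n}(1-\tilde c)^{-1}\norm{\delta_1}$, where $\delta$ is the unique solution of \eqref{eq:diff_QGE} (uniqueness by \cref{lem:uniqueness}; cf.\ also \cref{thm:diff_trafo}). The proof then closes with
\begin{equation*}
	\norm[\Big]{\frac{y_t - y^*}{t} - \delta}
	\le
	\frac{1}{t}\norm{y_t - y_{u_t, n}}
	+
	\norm[\Big]{\frac{y_{u_t, n} - y^*}{t} - \delta_n}
	+
	\norm{\delta_n - \delta}
\end{equation*}
for $t \in (0, t_0]$ and arbitrary $n$: by \eqref{eq:apriori_it} the first and third terms tend to $0$ as $n \to \infty$ uniformly in $t$, and the middle one tends to $0$ as $t \searrow 0$ for fixed $n$, so an $n$-then-$t$ argument yields $(y_t - y^*)/t \to \delta$. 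I expect the induction step to be the main obstacle: since $A$, $\Phi$ and $J_B$ are only directionally (not Hadamard) differentiable, there is no chain rule along the genuinely curved path $t \mapsto (\Phi(y_{u_t, n-1}, u_t), u_t)$, and the two nested decompositions are precisely what lets the nonlinear remainders be absorbed by the Lipschitz continuity of $\Phi(\cdot, u)$ and of $\phi \mapsto \Sigma(\phi, u)$; a secondary delicate point is the uniformity in $t$ of \eqref{eq:apriori_it}, on which the concluding diagonal argument hinges.
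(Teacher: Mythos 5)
Your proposal is correct and follows the same overall strategy as the paper's proof of \cref{thm:diff_via_it}: run the iteration \eqref{eq:iterative}, differentiate each iterate by induction, and combine the resulting $\delta_n$ with the uniform-in-$t$ geometric bound from \itemref{lem:BFP_it:2}. Within the induction step your bookkeeping via the solution map $\Sigma(\phi,u)$ and its $\gamma_A$-Lipschitz dependence on $\phi$ is just a repackaging of what the paper does: the paper introduces the auxiliary solution $\tilde y_{t,n}$ of the problem with frozen obstacle $\phi^*+t\Phi'(y^*,u^*;\delta_{n-1},h)$, applies \cref{thm:diff_S} to the pair $(\AA,\BB)$ from \eqref{eq:op_AA_BB} (using \eqref{eq:res_BB} exactly as you do), and absorbs $\norm{\tilde y_{t,n}-y_{t,n}}/t$ by the same monotonicity estimate that underlies your claim (a). The genuine difference is the passage $n\to\infty$: the paper does not prove convergence of $\seq{\delta_n}$ directly, but observes that $(y_{t,n}-y^*)/t\to(y_t-y^*)/t$ uniformly in $t$ and invokes the theorem on iterated limits, so that $\lim_n\delta_n$ exists for free and equals $\lim_{t\searrow0}(y_t-y^*)/t$; the equation for $\delta$ is then obtained by passing to the limit in \eqref{eq:QVI_delta_n}, using the (strong) graph-closedness of the maximally monotone operator $DB$. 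You instead show that the linearized recursion $\delta_{n-1}\mapsto\delta_n$ is itself a $\tilde c$-contraction, which requires the extra observation that the linearized data inherit \cref{asm:qge_small}; this is true, and in case \itemref{asm:qge_small:2} your ``limit of gradients of convex functions'' remark works, though a lighter route is to note that the inequality of \cref{thm:cvx_lipschitz_2} is stable under the pointwise strong limit defining $A'(y^*,u^*;\cdot,h)$. Your route costs this additional verification but buys a rate $\norm{\delta_n-\delta}\le C\tilde c^{\,n}$, avoids the iterated-limits theorem, and identifies $\delta$ as the fixed point of the linearized contraction without any limit passage in the inclusion; the paper's route is shorter at this point but needs the closedness argument and does not by itself address uniqueness for \eqref{eq:diff_QGE} (which, as you note, follows as in \cref{lem:uniqueness} or from \cref{thm:diff_trafo}).
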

\begin{proof}
	We fix $h \in U$.
	Then, \cref{lem:BFP_it,lem:control_cc}
	imply the existence of $t_0 > 0$, such that
	for all $t \in [0,t_0]$
	the sequence $\seq{y_{t,n}}_{n \in \N}$ defined via $y_{t,0} := y^*$
	and each
	$y_{t,n} \in B_\varepsilon(y^*)$ solves
	\begin{equation*}
		0 \in A(y_{t,n},u^* + t h) + B(y_{t,n} - \Phi(y_{t, n-1},u^* + t h), u^* + t h)
		,
	\end{equation*}
	satisfies
	\begin{equation}
		\label{eq:uniform_limit}
		\norm{ y_{t,n} - y_t }
		\le
		C t \tilde c^n
	\end{equation}
	where $\tilde c \in (0,1)$ is as in \cref{lem:BFP_it}
	and
	$C > 0$ is a constant.

	Next, we study the differentiability of $y_{t,n}$ w.r.t.\ $t > 0$.
	We claim that for all $n \ge 0$, we have the directional differentiabilities
	\begin{equation}
		\label{eq:diff_induction}
		\frac{y_{t,n} - y_t}{t} \to \delta_n,
		\qquad
		\frac{ \Phi(y_{t, n}, u^* + t h) - \phi^*}{t}
		\to
		\Phi'(y^*, u^*; \delta_{n}, h)
		,
	\end{equation}
	where
	$\delta_0 = 0$
	and for $n \ge 1$
	the point
	$\delta_n \in Y$ solves
	\begin{equation}
		\label{eq:QVI_delta_n}
		0
		\in
		A'(y^*, u^*; \delta_n, h)
		+
		DB(y^* - \phi^*, u^* \mathbin{|} \xi^* )(\delta_n - \Phi'(y^*, u^*; \delta_{n-1}, h), h)
		.
	\end{equation}
	We argue by induction over $n$.
	The base case $n = 0$ is clear since $y_{t,0} = y^* = y_0$.
	Assume that the assertion holds for $n - 1$.
	We abbreviate
	$\phi_{t,n} := \Phi(y_{t, n}, u^* + t h)$.
	Using the operators from \eqref{eq:op_AA_BB},
	we can recast the equation for $y_{t,n}$ as
	\begin{equation*}
		0 \in
		\AA( y_{t,n}, (\phi_{t,n-1}, u^* + th ))
		+
		\BB( y_{t,n}, (\phi_{t,n-1}, u^* + th ))
		.
	\end{equation*}
	Next, we apply \cref{lem:solve_fix} for $t > 0$ small enough (depending on $n$)
	with $\phi := \phi^* + t \psi_{t,n}$, $\psi_{t,n} = \Phi'(y^*, u^*; \delta_{n-1}, h)$
	to obtain a solution
	$\tilde y_{t,n} \in B_\varepsilon(y^*)$ of
	\begin{equation*}
		0 \in A(\tilde y_{t,n},u^* + t h) + B(\tilde y_{t,n} - \phi^* - t \Phi'(y^*,u^*; \delta_{n-1}, h), u^* + t h)
	\end{equation*}
	or, equivalently,
	\begin{equation*}
		0 \in
		\AA( \tilde y_{t,n}, (\phi^* + t \psi_{t,n}, u^* + th ))
		+
		\BB( \tilde y_{t,n}, (\phi^* + t \psi_{t,n}, u^* + th ))
		.
	\end{equation*}
	Now, we are in position to apply \cref{thm:diff_S}
	and this yields
	$(\tilde y_{t,n} - y^*) / t \to \delta_n$
	in $Y$ as $t \searrow 0$,
	where
	$\delta_n \in Y$ solves
	\begin{equation*}
		0 \in
		\AA'( y^*, (\phi^*, u^*); \delta_n, (\psi_{t,n}, h))
		+
		D\BB( y^*, (\phi^*, u^*); \delta_n, (\psi_{t,n}, h))
		.
	\end{equation*}
	Using \cref{lem:max_mon_dif_dep} and \eqref{eq:res_BB},
	we can relate $DB$ and $D\BB$ as follows
	\begin{align*}
		&D\BB( y^*, (\phi^*, u^*) \mathbin{|} \xi^*)(\delta, (\psi, h))
		\\
		&\qquad=
		\set[\big]{
			R (k - \delta)
			\given
			k \in Y
			,
			\delta = J_{\BB}'(q^*, (\phi^*, u^*); k, (\psi, h))
		}
		\\
		&\qquad=
		\set[\big]{
			R (k - (\delta-\psi))
			\given
			k \in Y
			,
			\delta - \psi = J_B'(q^* - \phi^*, u^*; k - \psi, h)
		}
		\\
		&\qquad=
		DB(y^* - \phi^*, u^* \mathbin{|} \xi^* )(\delta - \psi, h)
		.
	\end{align*}
	This results in the equation \eqref{eq:QVI_delta_n}.
	By using the equations satisfied by $y_{t,n}$
	and $\tilde y_{t,n}$,
	we get
	\begin{align*}
		&
		\dual{
			A(\tilde y_{t,n},u^* + t h)
			-
			A(y_{t,n},u^* + t h)
		}{
			\tilde y_{t,n}
			-
			y_{t,n}
		}
		\\&\quad
		\le
		\dual{
			A(\tilde y_{t,n},u^* + t h)
			-
			A(y_{t,n},u^* + t h)
		}{
			\phi^* + t \Phi'(y^*,u^*; \delta_{n-1}, h)
			- \Phi(y_{t, n-1},u^* + t h)
		}
		.
	\end{align*}
	Consequently,
	\begin{equation*}
		\frac1t \norm{ \tilde y_{t,n} - y_{t,n} }
		\le
		\frac{L_A}{\mu_A t} \norm{
			\phi^* + t \Phi'(y^*,u^*; \delta_{n-1}, h)
			- \Phi(y_{t, n-1},u^* + t h)
		}
		\to
		0
		\quad\text{as } t \searrow 0
		.
	\end{equation*}
	In combination with $(\tilde y_{t,n} - y^*) / t \to \delta_n$,
	this yields the first convergence in \eqref{eq:diff_induction}.
	The second convergence in \eqref{eq:diff_induction} follows
	since $\Phi$ is directionally differentiable and Lipschitz w.r.t.\ its first argument.
	Consequently,
	\eqref{eq:diff_induction} holds for all $n \ge 0$.

	Thus, we have shown
	\begin{equation}
		\label{eq:some_limits}
		\lim_{n \to \infty} \frac{y_{t,n} - y^*}{t} = \frac{y_t - y^*}{t}
		\quad\text{and}\quad
		\lim_{t \searrow 0} \frac{y_{t,n} - y^*}{t} = \delta_n,
	\end{equation}
	where both limits exist (strongly) in $Y$.
	Moreover,
	\begin{equation*}
		\norm*{
			\frac{y_{t,n} - y^*}{t} - \frac{y_t - y^*}{t}
		}
		=
		\norm*{
			\frac{y_{t,n} - y_t}{t}
		}
		\le
		C \tilde c^n,
	\end{equation*}
	cf.\ \eqref{eq:uniform_limit}.
	This shows that the limit $n \to \infty$ in \eqref{eq:some_limits} is uniform in $t \in (0,t_0]$.
	Hence, the classical theorem on the existence and equality of iterated limits
	ensures
	\begin{equation*}
		\delta
		:=
		\lim_{t \searrow 0} \frac{y_{t} - y^*}{t}
		=
		\lim_{t \searrow 0} \lim_{n \to \infty} \frac{y_{t,n} - y^*}{t}
		=
		\lim_{n \to \infty} \lim_{t \searrow 0} \frac{y_{t,n} - y^*}{t}
		=
		\lim_{n \to \infty} \delta_n
		\quad\text{in } Y.
	\end{equation*}
	Finally, passing to the limit $n \to \infty$ in \eqref{eq:QVI_delta_n}
	yields
	the equation for $\delta$.
\end{proof}

Note that
\cref{asm:qge_small} is only used in \cref{lem:BFP_it}.
It can be replaced by requiring that
the map
$T_u \colon B_\lambda(y^*) \to B_\varepsilon(y^*)$
is a contraction uniformly in $u \in \UU$.

It is quite interesting to see that the approaches from
\cref{subsec:reformulation_GE,subsec:iteration}
use the same assumptions
and,
actually,
\cref{thm:diff_trafo,thm:diff_via_it}
coincide.
If we look a little bit more carefully,
we see that in \cref{subsec:reformulation_GE},
\cref{lem:qge_small}
is only applied in the special case $y_i = z_i$, $i = 1, 2$,
see \cref{lem:uniqueness,lem:asm_tilde_A},
whereas \cref{subsec:iteration}
requires the application in the general case
$y_i \ne z_i$, $i = 1,2$,
see \cref{lem:BFP_it}.

Altogether,
it seems to be possible to craft special situations
in which only one of the approaches of
\cref{subsec:reformulation_GE,subsec:iteration}
is applicable.
However,
we think that
(up to some exceptional boundary cases)
the ranges of applicability
of both approaches coincide.

\section{Applications}
\label{sec:applications}

\subsection{Optimization with a parameter-dependent sparsity functional}
As a first application,
we
consider
the minimization problem
\begin{equation}
	\label{eq:sparse}
	\tag{$P(u)$}
	\text{Minimize}\quad
	F(y)
	+
	G(y,u)
	\quad
	\text{w.r.t.\ } y \in Y
\end{equation}
with a parameter $u \in U$.
Here,
$(\Omega, \Sigma, \mu)$
is a measure space
and
$Y = U = L^2(\mu)$.
Moreover,
$F \colon Y \to \R$
is a given functional
and
$G \colon Y \times U \to \R$
is defined via
\begin{equation*}
	G(y,u)
	:=
	\int_\Omega \abs{u y} \, \d\mu
	.
\end{equation*}
Thus,
\eqref{eq:sparse}
models, e.g., optimal control problems
which include a sparsity functional
and we are interested
in the sensitivity of the solution $y$
w.r.t.\ the distributed sparsity parameter $u$.

Suppose that $u^* \in U$ is fixed
and $y^* \in Y$ is a local minimizer
of \hyperref[eq:sparse]{\textup{($P(u^*)$)}}.
We assume that $F$
is Fréchet differentiable
in $B_\varepsilon(y^*)$ for some $\varepsilon > 0$,
such that its derivative $F'$
is (uniformly) strongly monotone and Lipschitz continuous
on $B_\varepsilon(y^*)$,
see \itemref{asm:standing:1}.

As it is usually done,
we identify the dual space of $Y = L^2(\mu)$
with itself.

Due to the convexity of $F$,
a point $y \in Y$ with $\norm{y - y^*} < \varepsilon$
is a local minimizer of \eqref{eq:sparse}
with $u \in U$
if and only if
\begin{equation*}
	0 \in F'(y) + \partial_y G(y,u)
	,
\end{equation*}
where
\begin{equation*}
	\partial_y G(y,u)
	=
	\set*{
		g \in L^2(\Omega)
		\given
		G(v,u) \ge G(y,u) + \int_\Omega g (v - y) \, \d\mu
		\quad\forall v \in Y
	}
\end{equation*}
is the subdifferential of $G$ w.r.t.\ $y$.
It is clear that \cref{asm:standing}
is satisfied with the setting
\begin{equation*}
	A(y,u) := F'(y),
	\qquad
	B(y,u) := \partial_y G(y,u)
	.
\end{equation*}
In order to apply the results from \cref{sec:gen_eq},
we have to study the properties of the resolvent $J_{\rho B}$, $\rho > 0$.
It is clear that
\begin{equation*}
	J_{\rho B}(q,u)
	=
	\prox_{\rho G(\cdot, u)}(q)
	=
	\argmin_{v \in Y} \int_\Omega \frac12 (v - q)^2 + \rho \abs{u v} \, \d\mu
	.
\end{equation*}
Now, a pointwise discussion shows that the resolvent
can be computed pointwise and is given by a soft-shrinkage with parameter $\rho \abs{u}$,
i.e.,
\begin{equation*}
	J_{\rho B}(q,u) (x)
	=
	\shrink_{\rho \abs{u(x)}}(q(x))
	:=
	\max( \abs{q(x)} - \rho \abs{u(x)}, 0) \sign(q(x))
	.
\end{equation*}
Now, since
\begin{equation*}
	\R^2 \ni (q,u) \mapsto
	\shrink_{\rho \abs{u}}(q)
	=
	\max( \abs{q} - \rho \abs{u}, 0) \sign(q)
	\in
	\R
\end{equation*}
is Lipschitz continuous and directionally differentiable,
it is easy to check that also
the associated Nemytskii operator
$J_{\rho B} \colon Y \times U \to Y$
is Lipschitz continuous and directionally differentiable.
If, additionally,
$F' \colon Y \to Y$
is directionally differentiable,
we are in position to apply \cref{thm:diff_S}
to obtain the directional differentiability
of the (local) solution mapping of \eqref{eq:sparse}.
Using \cref{lem:max_mon_dif_dep},
it is also possible to characterize the directional derivative.

\subsection{Quasi-linear QVIs}
We demonstrate
the applicability of our results to
a QVI governed by a quasi-linear operator.
To this end,
let
$\Omega \subset \R^d$ be open and bounded
and
with
$Y = H_0^1(\Omega)$
and
$U = L^2(\Omega)$
we define the
quasi-linear operator
$A \colon Y \times U \to Y\dualspace$
via
\begin{equation*}
	A(y,u)
	:=
	-\div g(\nabla y,u)
	+
	f(u)
	,
	\quad\text{i.e.}\quad
	\dual{A(y,u)}{v}
	=
	\int_\Omega g(\nabla y, u) \nabla v + f(u) v \, \d x,
\end{equation*}
where
$g \colon \R^d \times \R \to \R^d$
is (uniformly) strongly monotone
w.r.t.\ its first argument;
and Lipschitz continuous and differentiable on $\R^d \times \R$.
Moreover,
$f \colon \R \to \R$ is differentiable and Lipschitz continuous.
These conditions imply that $A$ satisfies \cref{asm:standing}
globally on $Y$.
Moreover,
using the dominated convergence theorem,
we can check that $A$ is directionally differentiable
with
\begin{equation*}
	\dual{
		A'(y, u; \delta, h)
	}{v}
	=
	\int_\Omega \parens*{ g'_y(\nabla y, u) \nabla \delta + g'_u(\nabla y, u) h } \nabla v + f'(u) h v \, \d x
	,
\end{equation*}
see also \cite[Theorem~8]{GoldbergKampowskyTroeltzsch1992}.

To define the operator $B$,
let
$K \subset Y$
be given
such that $\Proj_K$ is directionally differentiable,
e.g., we could choose a polyhedric $K$.
We set
$B(\cdot) := N_K(\cdot)$,
where $N_K$ is the normal cone mapping of $K$.
Note that $B$ is independent of the variable $u$.

With this setting, \cref{asm:standing,asm:diff} are satisfied.
Next, we choose $\Phi \colon Y \times U \to Y$
such that
there exists a Lipschitz constant $L_\Phi \in [0,1)$
with
\begin{equation*}
	\norm{\Phi(y_2, u) - \Phi(y_1, u)} \le L_\Phi \norm{y_2 - y_1}
\end{equation*}
for all $y_1, y_2 \in Y$ and all $u \in U$.
Further, we suppose that $\Phi$ is directionally differentiable
and that \cref{asm:qge_small} concerning the smallness of $L_\Phi$ is satisfied.

Since our assumptions on $A$, $B$ and $\Phi$ are global,
we obtain that for all $u \in U$,
there exists a unique solution $y \in Y$
of
the QVI
\begin{equation*}
	0 \in A(y,u) + B(y - \Phi(y, u)),
\end{equation*}
cf.\ \cite[Section~3]{Wachsmuth2019:2}.
Since all the assumptions from \cref{sec:qge}
are satisfied,
our differentiability theorems
imply
that the mapping $y \mapsto u$
is directionally differentiable.
For a fixed parameter $u^* \in U$
we denote the solution by $y^* \in Y$.
Then,
the directional derivative $\delta \in Y$
in the direction $h \in U$
is given by the solution of
\begin{equation*}
	0
	\in
	A'(y^*, u^*; \delta, h)
	+
	DB(y^* - \Phi(y^*, u^*) \mathbin{|} \xi^* )(\delta - \Phi'(y^*, u^*; \delta, h))
	,
\end{equation*}
where
$\xi^* = -A(y^*, u^*)$.
We mention that in the particular case that the set $K$ is polyhedric,
the set-valued mapping
$DB(y^* - \Phi(y^*, u^*) \mathbin{|} \xi^* )$
coincides with the normal cone mapping of the critical cone
\(
	\KK
	=
	T_K(y^*) \cap (\xi^*)\anni
\),
see \cref{prop:polyhedric}.

It is also clear that the above assumptions and arguments
can be localized if we already have a solution $y^*$
of the QVI
corresponding to the parameter $u^*$.

\bibliographystyle{jnsao}
\bibliography{from_resolvents_to_qvis}

\end{document}